\numberwithin{equation}{section}
\title[Boundedness of semistable sheaves]{Boundedness of semistable sheaves}
\begin{document}
\begin{abstract}
    In this expository article, we follow Langer's work in \cite{langer} to prove the boundedness of the moduli space of semistable torsion-free sheaves over a projective variety, in any characteristic. 
\end{abstract}




\author{Haoyang Guo}
  \address{Max Planck Institute for Mathematics, Vivatsgasse 7, Bonn, 53115, Germany}
  \email{hguo@mpim-bonn.mpg.de}
  
\author{Sanal Shivaprasad}
  \address{Department of Mathematics, University of  Michigan, 530~Church Street, Ann Arbor, MI~48109, United States}
  \email{sanal@umich.edu}
  
\author{Dylan Spence}
  \address{Department of Mathematics, Indiana University, Rawles Hall, 831 East 3rd Street, Bloomington, IN~47405, United States}
  \email{dkspence@indiana.edu}

\author{Yueqiao Wu}
  \address{Department of Mathematics, University of  Michigan, 530~Church Street, Ann Arbor, MI~48109, United States}
  \email{yueqiaow@umich.edu}
\maketitle
\setcounter{secnumdepth}{1}
\setcounter{tocdepth}{1}
\tableofcontents

\section{Introduction}

The study of moduli is one of the oldest branches of algebraic geometry and forms one of the central pillars of our modern understanding. Historically, such questions date back to Riemann, who, in his pursuit to understand what we now call Riemann surfaces, determined that a complex projective curve of genus $g$ depends on exactly $3g-3$ parameters (which he referred to as "moduli" - hence the name). The central question of moduli theory has not evolved much; we are interested in whether or not various algebraic objects can be "parametrized" (in some sense) by some other algebraic object. For example, one might be interested in forming a moduli space of algebraic curves with some fixed genus, or in our case, a moduli space of torsion-free sheaves with fixed Hilbert polynomial on a fixed variety $X$.  

To construct the moduli space of torsion-free sheaves $\{E_\alpha\}$ with fixed Hilbert polynomial $P$ over a given variety $X$, one of the first and the most fundamental problems is the \emph{boundedness} of the collection $\{E_\alpha\}$.
This property is equivalent to the moduli space, if exists, being a finite type scheme over the base field, and thus a reasonable geometric object that one can work with. However it was quickly discovered that even with fixing the Hilbert polynomial, certain pathological examples prevent such a moduli space from being well-behaved. Mumford, in the case of curves, introduced the notion of \emph{semistability} for a torsion-free sheaf as a solution for this problem, and to demonstrate its efficacy, he showed that the collection of semistable vector bundles of a fixed rank and degree on a fixed curve is bounded. As the notion of semistability can be generalized to torsion-free sheaves over general projective varieties, it is natural to ask about the boundedness for semistable sheaves on higher dimensional varieties. Following Langer's work in \cite{langer}, the goal of our article is to give a positive answer to the boundedness problem.
Precisely, we prove the following:

\begin{theorem}\label{main}
Let $\mathcal{M}^P(X)$ be the moduli space of semistable torsion-free sheaves with fixed Hilbert polynomial $P$. Then it is of finite-type.
\end{theorem}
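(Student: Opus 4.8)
The plan is to identify finite-typeness of $\mathcal{M}^P(X)$ with the \emph{boundedness} of the family $\mathcal{S}$ of semistable torsion-free sheaves on $X$ with Hilbert polynomial $P$. Indeed, if $\mathcal{S}$ is bounded then there is a single integer $m$ such that every $E\in\mathcal{S}$ is $m$-regular; putting $N=P(m)$, each such $E$ is then a quotient of $\mathcal{O}_X(-m)^{\oplus N}$, hence a point of the projective Quot scheme $\operatorname{Quot}_X(\mathcal{O}_X(-m)^{\oplus N},P)$, which is of finite type. Since $\mathcal{M}^P(X)$ is obtained from a locally closed subscheme of this Quot scheme by a quotient under a linear algebraic group, it is itself of finite type. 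So everything reduces to bounding $\mathcal{S}$.

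To bound $\mathcal{S}$ I would invoke Kleiman's boundedness criterion: a family with fixed Hilbert polynomial is bounded as soon as there is a constant $C$ such that each member $E$ admits a regular sequence of hyperplane sections $X=X_0\supset X_1\supset\cdots\supset X_n$ (with respect to a fixed polarization, or to large powers of it) along which $h^0(E|_{X_j})\le C$ for all $j$. At the bottom $h^0(E|_{X_n})=\chi(E|_{X_n})$ is already pinned down by $P$, so the real content is a uniform bound on $h^0$ of the intermediate restrictions. This I would get by induction on $n=\dim X$: at each step restrict along a hypersurface of sufficiently large degree, so that the Harder--Narasimhan slopes of the restriction remain confined to a bounded window, and then bound $h^0$ in terms of the maximal slope.

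The analytic heart is Langer's estimate that for any torsion-free sheaf $E$ on an $n$-dimensional polarized variety, $h^0(E)/\operatorname{rk}(E)$ is bounded above by a polynomial of degree $n$ in $\max\bigl(0,\mu_{\max}(E)\bigr)$ whose coefficients depend only on $X$, the polarization, $n$, and $\operatorname{rk}(E)$; this is itself proved by induction on $n$, restricting to a general hyperplane and comparing the Harder--Narasimhan slopes of $E|_{X\cap H}$ with those of $E$. When $E$ is semistable one has $\mu_{\max}(E)=\mu(E)$, a number read off from the first two coefficients of $P$; hence $h^0(E)$---and, once we know their $\mu_{\max}$ is controlled, each $h^0(E|_{X_j})$---is uniformly bounded, which is exactly what Kleiman's criterion requires.

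The main obstacle is supplying that control of $\mu_{\max}$ under restriction, namely an \emph{effective restriction theorem}: if $E$ is semistable then, for a general hypersurface $D\in|\mathcal{O}_X(a)|$ with $a$ above an explicit threshold, $E|_D$ is again semistable (or at least has $\mu_{\max}(E|_D)-\mu_{\min}(E|_D)$ bounded). The threshold can be taken to depend only on $\operatorname{rk}(E)$ and on the intersection number $\Delta(E)\cdot H^{n-2}$, which is itself determined by $P$, so with this in hand the induction closes and the hyperplane sections in Kleiman's criterion can be chosen uniformly. The genuine difficulty---and Langer's main advance over the characteristic-zero arguments of Mehta--Ramanathan and Maruyama---is that in characteristic $p$ semistability is not preserved by Frobenius pullback and the classical Bogomolov inequality $\Delta(E)\cdot H^{n-2}\ge 0$ can fail; one must instead establish a corrected Bogomolov-type inequality bounding the discriminant of a semistable sheaf, equivalently controlling how far its iterated Frobenius pullbacks can destabilize. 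It is this inequality, together with the Hodge-index and restriction machinery reducing it to the case of surfaces, that I expect to be the hardest ingredient.
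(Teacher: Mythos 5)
Your proposal follows essentially the same route as the paper: reduce finite-typeness to boundedness, apply Kleiman's criterion together with the bound of $h^0$ in terms of $\mu_{\max}$ of the restriction (Lemma \ref{H0BoundedByMuMax}), and then control $\mu_{\max}$ of restrictions to general hypersurfaces via a Grauert--M\"ulich/restriction-type estimate whose proof rests on a positive-characteristic Bogomolov inequality handled through Frobenius pullbacks, the Hodge index theorem, and reduction to surfaces --- exactly the architecture of Theorems \ref{RestrictionSlopeEstimate}, \nameref{Res} and \nameref{BI}. The only cosmetic difference is that you frame the restriction step as preserving semistability for hypersurfaces of large degree, whereas the paper works with a fixed very ample polarization and only bounds $\mu_{\max}(E|_D)-\mu_{\min}(E|_D)$, an alternative you yourself note parenthetically.
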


Let us now discuss the idea of the proof. Given any (semistable) torsion-free sheaf $E$ in our collection, we write $E|_H$ for the restriction of $E$ to a general hypersurface $H$, and $\mu_{\max}(E|_H)$ for the maximal slope in the Harder-Narasimhan polygon of $E|_H$. 

Our most important tool in the proof is Kleiman's criterion (Theorem \ref{Kleiman}) and an accompanying inequality (Lemma \ref{H0BoundedByMuMax}), which, when combined, say that to prove the boundedness of the family of torsion-free sheaves $\mathcal{M}^P(X)$, it suffices to give a uniform bound of $\mu_{\max}(E|_H)$. Because of this fact, most of the work in proving boundedness is producing such a uniform bound, which we obtain in Corollary \ref{RestrictionLEstimate} and Theorem \ref{RestrictionSlopeEstimate}.
Explicitly, it can be understood as follows:
\begin{theorem}\label{inequality}
Let $E$ be a torsion-free sheaf over $X$, with Hilbert polynomial $P$, and denote the maximal slope of its Harder-Narasimhan polygon be $\mu_0$.
Then we have
\[
\mu_{\max}(E|_H) \leq C_1(P) + C_2(P)\mu_0,
\]
where $H$ is a general hypersurface of $X$, and $C_i(P)$ are constants determined only by $P$ and $X$.
\end{theorem}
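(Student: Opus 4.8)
The plan is to deduce the inequality from the two restriction estimates already in hand, Corollary~\ref{RestrictionLEstimate} and Theorem~\ref{RestrictionSlopeEstimate}, whose error terms are built from the rank of $E$, the ordinary slope of $E$, the discriminant number $\Delta(E)\cdot H^{n-2}$, and fixed intersection numbers of $X$; the whole point will then be that all of these, \emph{except} the maximal Harder--Narasimhan slope $\mu_0$, are controlled by $P$ and $X$.

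First I would check what $P$ pins down. From $P(m)=\chi(E(m))=\int_X\operatorname{ch}(E)\,e^{mH}\operatorname{td}(X)$, comparing the coefficients of $m^n,m^{n-1},m^{n-2}$ and using that $\operatorname{td}(X)$ is fixed shows that the rank $r$, the degree $c_1(E)\cdot H^{n-1}$, and the number $\operatorname{ch}_2(E)\cdot H^{n-2}$ depend only on $P$ and $X$; in particular the ordinary slope of $E$ (and hence of $E|_H$) is a function of $P$. The quantity $c_1(E)^2\cdot H^{n-2}$ is \emph{not} determined by $P$, but the Hodge index theorem gives $c_1(E)^2\cdot H^{n-2}\le (c_1(E)\cdot H^{n-1})^2/H^n$, so $\Delta(E)\cdot H^{n-2}=c_1(E)^2\cdot H^{n-2}-2r\,\operatorname{ch}_2(E)\cdot H^{n-2}$ is bounded \emph{above} by a constant depending only on $P$ and $X$ — which is all the restriction estimates need, since their bounds are increasing in $\Delta\cdot H^{n-2}$.

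If the cited estimates are phrased for $\mu$-semistable sheaves, I would next reduce to that case by restricting the Harder--Narasimhan filtration $0=E_0\subset\cdots\subset E_\ell=E$ to a general $H$: restriction is exact, and chasing the maximal destabilizing subsheaf of $E|_H$ into the graded pieces yields $\mu_{\max}(E|_H)\le\max_i\mu_{\max}\bigl((E_i/E_{i-1})|_H\bigr)$. Each $G_i=E_i/E_{i-1}$ is $\mu$-semistable of rank $\le r$ with ordinary slope between $r\mu(E)-(r-1)\mu_0$ and $\mu_0$, and using the additivity identity $\tfrac{\Delta(E)}{r}=\sum_i\tfrac{\Delta(G_i)}{r_i}-\sum(\text{squares})$ for the filtration together with Hodge index (to bound the square terms by the controlled slope gaps) and the characteristic-$p$ Bogomolov inequality for the $\mu$-semistable $G_j$ (to bound the remaining $\Delta(G_j)\cdot H^{n-2}$ from below), one sees that $\Delta(G_i)\cdot H^{n-2}$ is bounded above by a constant that is polynomial in $\mu_0$ with coefficients depending only on $P$ and $X$.

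Finally I would feed these $\mu$-semistable pieces into Theorem~\ref{RestrictionSlopeEstimate} and Corollary~\ref{RestrictionLEstimate}: the former bounds the Frobenius-stabilized slope $L_{\max}(G_i)$ in terms of $\mu_{\max}(G_i)$, $r$, and $\Delta(G_i)\cdot H^{n-2}$, and the latter then bounds $\mu_{\max}(G_i|_H)$; in characteristic $0$ both steps are automatic since $L_{\max}=\mu_{\max}$. Because these bounds involve $\Delta\cdot H^{n-2}$ under a square root, the degree-two dependence on $\mu_0$ collapses to a linear one, and collecting the $(P,X)$-pieces into $C_1(P)$ and the coefficient of $\mu_0$ into $C_2(P)$ gives the claim. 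I expect the only delicate part of this assembly to be the bookkeeping — verifying monotonicity of the error terms in $\Delta\cdot H^{n-2}$ and that the net $\mu_0$-dependence is affine; the genuine difficulty is already sealed inside Corollary~\ref{RestrictionLEstimate} and Theorem~\ref{RestrictionSlopeEstimate}, and were one to reprove those from scratch the main obstacle would be the positive-characteristic Bogomolov-type inequality, obtained by amplifying a hypothetical destabilizer under iterated Frobenius pullbacks and inducting on $\dim X$.
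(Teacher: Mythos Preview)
Your approach is correct but takes an unnecessary detour. Theorem~\ref{RestrictionSlopeEstimate} is stated for \emph{arbitrary} torsion-free $E$, not only for semistable ones, so the paper applies it directly to $E$ itself: from $\mu_{\max,1}\le \mu_1 + (\mu_{\max,1}-\mu_{\min,1})$ and the bound of Theorem~\ref{RestrictionSlopeEstimate}, the only term not controlled by $(P,X)$ is $\mu_{\max}(E)-\mu_{\min}(E)$, and this is at most $r(\mu_0-\mu)$ via the elementary inequality $r\mu\le \mu_{\min}+(r-1)\mu_{\max}$. Your passage through the Harder--Narasimhan graded pieces $G_i$ works too, but it forces you to bound each $\Delta(G_i)\cdot H^{n-2}$ separately (using additivity, Bogomolov on the remaining pieces, and Hodge index on the cross terms), recovering the same affine dependence on $\mu_0$ only after a square root; the paper's route avoids all of that bookkeeping. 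Conversely, you do spell out explicitly why $\Delta(E)\cdot H^{n-2}$ is bounded above by a $(P,X)$-constant (Riemann--Roch for $\operatorname{ch}_2(E)\cdot H^{n-2}$, Hodge index for $c_1(E)^2\cdot H^{n-2}$), a point the paper only asserts in the remark following Theorem~\ref{RestrictionSlopeEstimate}. One minor correction: in your last paragraph you have the roles of Corollary~\ref{RestrictionLEstimate} and Theorem~\ref{RestrictionSlopeEstimate} swapped --- the latter already bounds the restricted gap $\mu_{\max,l}-\mu_{\min,l}$ directly, and neither result is a bound on $L_{\max}(G_i)$ in terms of $\Delta(G_i)$.
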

The key is that this result allows us to bound the slopes $\mu_{\max}(E|_H)$ of the restrictions $E|_H$ by the slopes of the original $E$, plus extra terms and coefficients that are controlled only by the Hilbert polynomial of $E$.

The major contents of our article are then devoted to obtaining the inequality above, following \cite[Section 3]{langer}.
The strategy can be understood as a double induction on two different results.
The first one, Theorem \nameref{Res}, is a special case of Theorem \ref{inequality} above, considering the invariants before and after the restriction of $E$ to a hypersurface, where $E$ is torsion-free of rank $\leq r$ (c.f. \cite[Theorem 3.1]{langer}).
It relates the slopes of the Harder-Narasimhan filtration of the restriction $E|_H$ to the discriminant and Harder-Narasimhan filtration of $E$ itself.
The second result is a collection of several various numerical inequalities, which we refer to as \emph{Bogomolov's inequalities} (Theorem \nameref{BI}, c.f. \cite[Theorem 3.2-3.4, and $T^5(r)$]{langer}), due to their similarity with the well-known inequality in characteristic zero of the same name.

Up to an extra term, it says that the discriminant $\Delta(E)$ of $E$ is non-negative when the torsion-free sheaf $E$ is (strongly) semistable, where $E$ is of rank $\leq r$.
Using the notations above, Langer's induction schema can be summarized as the following two implications (c.f. Section \ref{sec Res}):
\label{schema}
\begin{equation}
    \begin{cases}
        BI(r) \Rightarrow Res(r);&\\
        Res(r) + BI(r) \Rightarrow BI(r+1).&
    \end{cases}
\end{equation}

We note that both results are true automatically for rank $r=1$. In this way, the two technical results are proved together, thus so is the inequality in Theorem \ref{inequality} and the boundedness in Theorem \ref{main}.

We also mention that in the positive characteristic case, a Frobenius pullback of a semistable sheaf may no longer be semistable. So we define semistable sheaves whose Frobenius pullbacks are all semistable as being \emph{strongly semistable}. A priori, for a given sheaf, the Harder-Narasimhan filtrations of its Frobenius pullbacks could be unrelated to each other. One of Langer's key observations is that their differences are controllable: the Harder-Narasimhan filtrations eventually stabilize with respect to the Frobenius (Theorem \ref{fdHN}, \cite[Theorem 2.7]{langer}). This allows us to pass between semistable and strongly semistable sheaves in the proof.

Lastly, we briefly mention the history preceding Langer's result; a more detailed introduction can be found in \cite{langer}.
Theorem \ref{main} was known in characteristic zero, proven by Barth, Spindler, Maruyama, Forster, Hirschowitz and Schneider. In positive characteristic however, only the cases for curves and surfaces were known.
At that time, it was known that understanding the numerical quantities associated to $E$  before and after restricting $E$ to a hypersurface (often called Grauet--M\"ulich type results) should allow one to prove boundedness for a given family, but prior to \cite{langer} only a coarser result by Mehta and Ramanathan was available.
On the other hand, Bogomolov showed the non-negativity of the discriminant of $E$ whenever $E$ is semistable, assuming $X$ is in characteristic zero. It was unknown as well whether the result is completely true in positive characteristic, but it was clear that some version would be needed. Thus Langer's key contributions were in developing positive characteristic versions of the above, and also in combining them in a fruitful way.

\subsection{Leitfaden of the article.}
We start with Section \ref{Sec pre} on necessary preliminaries.
The section contains three subsections, including basics on the stability of coherent sheaves and additional results in positive characteristic.
Moreover, as the proof of main theorems requires working with polarizations consisting of nef divisors, we also include a subsection on how to approximate the nef polarizations by ample polarizations.
In Section \ref{Sec BI} we introduce several forms of Bogomolov's inequalities in positive characteristic, and show that they are equivalent in Theorem \nameref{BI}.
For the reader's convenience, this section corresponds to the implications $T^5(r) \Rightarrow T^3(r) \Rightarrow T^4(r) \Rightarrow T^2(r) \Rightarrow T^5(r)$ in \cite[$\S$3.6 - $\S$3.8]{langer}.
In Section \ref{sec Res}, we prove Theorem \ref{Res}, and complete the major technical part of the article by proving the induction schema as in (\ref{schema}).
This corresponds to \cite[$\S$3.5, $\S$3.9]{langer} and \cite{langer-erratum}.
At last, in Section \ref{Sec bound}, we combine the main technical results above to show the boundedness of the moduli space $\mathcal{M}^P(X)$, thus finishing the proof of Theorem \ref{main}.

\subsection{Acknowledgements}
We would like to express our gratitude to the organizers of the Stacks Project Workshop for organizing the event online during the pandemic. We thank Alex Perry for kindly guiding us through the paper during the week and beyond. We also thank Faidon Andriopoulos for helpful discussions during the workshop, and thank Adrian Langer for answering technical questions. At last, we thank the referee for reading the draft carefully and proposing various comments to help improve the article. 

Haoyang Guo was partially funded by the FRG grant no. DMS-1952399 during the writing of the project.

\section{Preliminaries}\label{Sec pre}
In this section, we provide the preliminaries for the article.

\allowdisplaybreaks
\subsection{Stability of Coherent Sheaves}
In this section we recall some useful facts on the (semi)stability  of coherent sheaves. We refer to \cite{huybrechts} for details and proofs. Let $X$ be a smooth projective variety of dimension $n$ over an algebraically closed field. Fix $n-1$ ample divisors $D_1,\ldots,D_{n-1}$ on $X$. Let $E$ be a coherent sheaf on $X$. Recall that a sheaf $E$ is said to be pure (of dimension $d\leq \dim X$) if for all nontrivial subsheaves $f \subset E$, $\dim (F) := \dim \operatorname{supp}(F) = d$.

\begin{definition}
The \emph{slope} of $E$, with respect to the polarization $(D_1,\ldots,D_{n-1})$, is defined by
$$\mu(E):= \frac{\deg(E)}{\mathrm{rk}(E)}:=\frac{c_1(E)\cdot D_1\cdots D_{n-1}}{\mathrm{rk}(E)}$$
where $\mathrm{rk}(E)$ is the rank of $E$ at the generic point. If $\mathrm{rk}(E) = 0$, then the $\mu(E) := \infty$.
\end{definition}

\noindent As indicated, the slope of a coherent sheaf does depend on the choice of ample (or nef) divisors defining a polarization. For the purpose of clarity, we will not reference this choice of polarization when discussing slope unless there is a serious risk of confusion.

\begin{definition}
We say that $E$ is \emph{(semi)stable} if $E$ is pure and for any nonzero proper subsheaf $F\subset E$, $\mu(F)< \mu(E)$ (resp. $\mu(F)\leq \mu(E)$). 
\end{definition}

\noindent This notion of (semi)stability is more generally known as slope-(semi)stability or $\mu$-(semi)stability. An equivalent formulation of the definition can also be stated in terms of quotient sheaves.
\begin{lemma} \label{EquivChar}\cite[Proposition 1.2.6]{huybrechts}
Let $E$ be a coherent sheaf.
Then $E$ is semistable if and only if for all the quotient sheaves $E\to G$, $\mu(E)\le \mu(G).$
\end{lemma}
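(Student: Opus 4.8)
The plan is to prove the equivalence by relating subsheaves and quotient sheaves of $E$ via the exact sequence $0 \to F \to E \to G \to 0$, and tracking how slope behaves under such sequences. The first thing I would record is the additivity of rank and degree: for a short exact sequence $0 \to F \to E \to G \to 0$ of coherent sheaves (with $F$ the saturated subsheaf, so that $G$ is again pure of the same dimension), we have $\mathrm{rk}(E) = \mathrm{rk}(F) + \mathrm{rk}(G)$ and $\deg(E) = \deg(F) + \deg(G)$. The rank additivity is immediate at the generic point; the degree additivity follows from additivity of $c_1$ on short exact sequences. From these two facts one gets the elementary ``seesaw'' lemma: with $r_F, r_G > 0$, the three inequalities $\mu(F) < \mu(E)$, $\mu(E) < \mu(G)$, and $\mu(F) < \mu(G)$ are mutually equivalent (and likewise with all strict inequalities replaced by non-strict ones). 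This is just the observation that $\mu(E)$ is the weighted average $\frac{r_F \mu(F) + r_G \mu(G)}{r_F + r_G}$, so it lies strictly between $\mu(F)$ and $\mu(G)$ when these differ, and equals both when they agree.

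Granting the seesaw lemma, the proof is short. For the forward direction, suppose $E$ is semistable and let $E \to G$ be a nonzero quotient; I may assume $G$ is nonzero and, replacing $G$ by its image, that $E \to G$ is surjective with kernel $F \subsetneq E$. If $F = 0$ then $G = E$ and there is nothing to prove, so assume $F \neq 0$. Purity of $E$ forces $\dim F = \dim G = \dim E = n$, so $\mathrm{rk}(F), \mathrm{rk}(G) > 0$; semistability gives $\mu(F) \leq \mu(E)$, and the seesaw lemma converts this to $\mu(E) \leq \mu(G)$, as desired. Conversely, suppose $\mu(E) \leq \mu(G)$ for every quotient $E \to G$. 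First, purity: if $F \subset E$ were a nonzero subsheaf with $\dim F < \dim E$, then $E/T \to E/F$ would be a quotient where... — more carefully, one argues that $E$ has no nonzero subsheaf of lower dimension by considering the torsion/lower-dimensional part and the induced quotient; I would cite \cite{huybrechts} for the standard reduction here, or note that a nonzero subsheaf of smaller support gives a quotient of strictly larger slope ($= \infty$ in the extreme case of a torsion subsheaf), contradicting the hypothesis. Then for any nonzero proper subsheaf $F \subsetneq E$ with quotient $G = E/F \neq 0$, both ranks are positive, the hypothesis gives $\mu(E) \leq \mu(G)$, and the seesaw lemma yields $\mu(F) \leq \mu(E)$. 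Hence $E$ is semistable.

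The only genuinely delicate point is the purity part of the converse — making precise why the quotient condition forces $E$ to be pure — since the slope formula has to be handled with care when lower-dimensional (in particular torsion) subsheaves are present and $\mathrm{rk}$ can be $0$. I expect this to be the main obstacle, though it is entirely standard and is exactly what is packaged in \cite[Proposition 1.2.6]{huybrechts}; the weighted-average computation is routine. In writing this up I would state the seesaw lemma as a separate displayed claim, prove it in two lines, and then dispatch both implications as above.
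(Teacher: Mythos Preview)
The paper does not actually prove this lemma; it is stated with a citation to \cite[Proposition 1.2.6]{huybrechts} and no argument is given. So there is nothing in the paper to compare your proposal against beyond the bare reference.

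That said, your approach is the standard one and matches what one finds in the cited source: reduce to short exact sequences $0 \to F \to E \to G \to 0$, use additivity of rank and degree to get the weighted-average (``seesaw'') relation among $\mu(F), \mu(E), \mu(G)$, and read off each implication. The forward direction and the main part of the converse are clean.

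You are right to flag the purity step in the converse as the one subtle point. Your parenthetical suggestion --- that a lower-dimensional subsheaf would produce a quotient of slope $\infty$ --- is backwards: it is the \emph{subsheaf} $F$ with $\mathrm{rk}(F)=0$ that has $\mu(F)=\infty$ by the convention in the paper, while the quotient $G=E/F$ has $\mathrm{rk}(G)=\mathrm{rk}(E)$ and hence finite slope. The correct observation is that if $F$ is the maximal lower-dimensional subsheaf, then $c_1(F)$ is effective (or zero), so $\deg(G)\le \deg(E)$ and thus $\mu(G)\le\mu(E)$; to get a strict contradiction one has to be a bit careful about the case where $F$ is supported in codimension $\ge 2$. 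In practice the cleanest fix is exactly what you propose: assume $E$ pure from the outset (as the cited proposition in \cite{huybrechts} effectively does) or invoke that reference directly for this reduction.
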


\noindent This formulation of the definition gives a quick proof of the following very useful fact.
\begin{lemma}\label{HomVan}\cite[Proposition 1.2.7]{huybrechts}
Let $E_1$ and $E_2$ be semistable sheaves with slopes $\mu_1$ and $\mu_2$ such that $\mu_1 > \mu_2$. Then, $\mathrm{Hom}(E_1,E_2) = 0$. 
\end{lemma}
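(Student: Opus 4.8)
The statement to prove is Lemma~\ref{HomVan}: if $E_1, E_2$ are semistable with $\mu_1 > \mu_2$, then $\mathrm{Hom}(E_1, E_2) = 0$.

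\medskip

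\textbf{Proof plan.} The plan is to argue by contradiction: suppose $\varphi \colon E_1 \to E_2$ is a nonzero homomorphism, and let $F = \operatorname{im}(\varphi) \subseteq E_2$, which is a nonzero coherent sheaf. First I would note that $F$ is simultaneously a quotient of $E_1$ and a subsheaf of $E_2$. Since $E_1$ is semistable and $F$ is a nonzero quotient of $E_1$, the quotient-sheaf characterization in Lemma~\ref{EquivChar} gives $\mu(E_1) \leq \mu(F)$, i.e. $\mu_1 \leq \mu(F)$. On the other hand, $F$ is a nonzero subsheaf of $E_2$; here one should be slightly careful about whether $F$ is a proper subsheaf. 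If $F = E_2$ then $\mu(F) = \mu_2$ directly; if $F \subsetneq E_2$ is proper, then semistability of $E_2$ gives $\mu(F) \leq \mu(E_2) = \mu_2$. In either case $\mu(F) \leq \mu_2$. Combining the two inequalities yields $\mu_1 \leq \mu(F) \leq \mu_2$, contradicting the hypothesis $\mu_1 > \mu_2$. Hence no nonzero homomorphism exists.

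\medskip

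\textbf{Remarks on subtleties.} The one genuinely delicate point is that the definition of semistability as stated quantifies only over nonzero \emph{proper} subsheaves (resp. requires comparison with proper quotients), so the two boundary cases $F = E_2$ (when $\varphi$ is surjective onto $E_2$) and $F$ a nonzero proper subsheaf must be handled separately, as above; neither case causes trouble. A second minor point worth checking is that $F$, being the image of a map between pure sheaves of the same dimension, has a well-defined rank and degree with respect to the fixed polarization, so that the slope $\mu(F)$ makes sense and the inequalities from Lemma~\ref{EquivChar} and from the definition of semistability of $E_2$ actually apply to it; this is immediate since $F$ is a nonzero subsheaf of the pure sheaf $E_2$ and hence is itself pure of the same dimension. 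I expect the whole argument to be short — the only ``obstacle'' is bookkeeping the proper-versus-improper subsheaf distinction, which is routine.
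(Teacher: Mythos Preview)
The paper does not actually supply a proof of this lemma; it simply cites \cite[Proposition~1.2.7]{huybrechts} and moves on. Your argument is correct and is precisely the standard proof given in that reference: factor a hypothetical nonzero $\varphi$ through its image $F$, use the quotient characterization (Lemma~\ref{EquivChar}) on the $E_1$ side and the subsheaf definition on the $E_2$ side to sandwich $\mu(F)$, and derive the contradiction $\mu_1 \leq \mu_2$. Your bookkeeping of the boundary case $F = E_2$ is appropriate and nothing further is needed.
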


The definition of semistability also behaves well with regards to exact sequences, as the following two lemmas indicate.
\begin{lemma}\label{ExactAdd}
Let $E' \to E \to E''$ be a short exact sequence of torsion-free coherent sheaves, and let $\mu',\mu,\mu''$ and $r',r,r''$ be the slopes and the ranks of $E', E, E''$ respectively. Then, $r\mu = r'\mu' + r''\mu''$. In particular if the slopes of two of $E',E,E''$ are the same, then so is the case for the third. 
\end{lemma}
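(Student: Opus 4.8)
The statement is essentially the additivity of degree and rank in short exact sequences, packaged at the level of slopes. First I would recall that rank is additive on short exact sequences of coherent sheaves: passing to the generic point $\eta$ of $X$, the sequence $E'_\eta \to E_\eta \to E''_\eta$ of $\mathcal{O}_{X,\eta}$-modules is still exact, and since $\mathcal{O}_{X,\eta}$ is a field, exactness of a sequence of finite-dimensional vector spaces gives $\dim E_\eta = \dim E'_\eta + \dim E''_\eta$, i.e. $r = r' + r''$. Next I would establish that $c_1$ (hence $\deg$, after intersecting with the fixed polarization $D_1\cdots D_{n-1}$) is additive on short exact sequences. For locally free sheaves this is the standard multiplicativity of the determinant line bundle, $\det E \cong \det E' \otimes \det E''$; for torsion-free sheaves on a smooth variety one reduces to this by noting that $c_1$ depends only on the determinant, which can be computed on the open locus where the sheaves are locally free (the complement has codimension $\geq 2$, so it does not affect $c_1$), or alternatively by citing the general additivity of Chern classes / the Grothendieck group. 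Either way one gets $\deg E = \deg E' + \deg E''$, and therefore $r\mu = \deg E = \deg E' + \deg E'' = r'\mu' + r''\mu''$.

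For the "in particular" clause: suppose two of the three slopes agree. If $\mu' = \mu''$, then $r\mu = r'\mu' + r''\mu'' = (r' + r'')\mu' = r\mu'$, and since $E$ is torsion-free of positive rank ($r = r' + r'' > 0$, as $E'$ is a nonzero subsheaf — or one treats the trivial cases separately) we divide by $r$ to get $\mu = \mu'$. If instead $\mu = \mu'$, then $r'\mu + r''\mu'' = r\mu = (r' + r'')\mu$, so $r''\mu'' = r''\mu$, and dividing by $r''$ (again positive, since $E''$ is torsion-free and the quotient in a short exact sequence of this form is nonzero) gives $\mu'' = \mu$; the case $\mu = \mu''$ is symmetric.

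The only genuine subtlety — and the step I would be most careful about — is the additivity of $\deg$ in the torsion-free (not locally free) setting, since $c_1$ and the rank are defined via the generic point / determinant and one must make sure the short exact sequence is well-behaved there; but on a smooth variety this is routine, as the non-locally-free locus has codimension $\geq 2$ and hence is invisible to $c_1$. I would simply cite \cite{huybrechts} for the determinant formalism and the additivity of $c_1$, and keep the write-up to a few lines.
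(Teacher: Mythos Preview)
Your proposal is correct. The paper itself states this lemma without proof, treating the additivity of rank and degree in short exact sequences as standard background (implicitly referring the reader to \cite{huybrechts}); your argument via additivity of rank at the generic point and of $c_1$ via the determinant on the locally free locus is exactly the expected justification, and your handling of the ``in particular'' clause is fine.
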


\begin{lemma}
If $E' \to E \to E''$ is a short exact sequence and let $\mu',\mu,\mu''$ denote the slopes of $E',E,E''$ respectively, then 
\begin{itemize}
    \item If $E$ is semistable and either $\mu' = \mu$ or $\mu'' = \mu$, then $E'$ and $E''$ are also semistable. 
    \item If $E'$ and $E''$ are semistable and $\mu' = \mu''$, then $E$ is semistable with $\mu = \mu' = \mu''$. 
\end{itemize}
\end{lemma}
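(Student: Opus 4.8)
The plan is to reduce both statements to two additive facts about a short exact sequence $0\to E'\to E\to E''\to 0$ of torsion-free sheaves, namely that rank and degree---hence slope, via Lemma~\ref{ExactAdd}---are additive, together with the quotient characterization of semistability in Lemma~\ref{EquivChar}. First I would note that working throughout in the torsion-free category (as in Lemma~\ref{ExactAdd}) trivializes the purity requirement in the definition of semistability: a nonzero subsheaf of a torsion-free sheaf is again torsion-free, and an extension of a torsion-free sheaf by a torsion-free sheaf is torsion-free (a torsion section of $E$ maps to a torsion---hence zero---section of $E''$, so it lies in the torsion-free $E'$ and therefore vanishes); consequently nonzero subsheaves of $E'$ and of $E''$ have positive rank.

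For the first statement, I would begin by observing that whichever of $\mu'=\mu$ or $\mu''=\mu$ is assumed, Lemma~\ref{ExactAdd} forces $\mu'=\mu=\mu''$, so the two cases are handled uniformly. To prove $E'$ semistable: any nonzero proper subsheaf $F\subsetneq E'$ is in particular a nonzero proper subsheaf of $E$, so semistability of $E$ yields $\mu(F)\le\mu(E)=\mu(E')$. To prove $E''$ semistable: for any quotient $E''\twoheadrightarrow G$, the composite $E\twoheadrightarrow E''\twoheadrightarrow G$ is a quotient of $E$, so Lemma~\ref{EquivChar} applied to the semistable $E$ gives $\mu(G)\ge\mu(E)=\mu(E'')$; applying Lemma~\ref{EquivChar} in the converse direction shows $E''$ is semistable.

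For the second statement, the equality $\mu=\mu'=\mu''$ is immediate from Lemma~\ref{ExactAdd} once $\mu'=\mu''$ is known. To prove $E$ semistable I would take a nonzero proper subsheaf $F\subsetneq E$ and form $F'=F\cap E'=\ker(F\to E'')\subseteq E'$ and $F''=\operatorname{im}(F\to E'')\subseteq E''$, so that $0\to F'\to F\to F''\to 0$ is exact. If $F'=0$ then $F\cong F''\subseteq E''$ and $\mu(F)\le\mu(E'')=\mu$ by semistability of $E''$; if $F''=0$ then $F=F'\subseteq E'$ and $\mu(F)\le\mu(E')=\mu$; otherwise $F'$ and $F''$ are both nonzero, hence of positive rank, and Lemma~\ref{ExactAdd} (with additivity of rank) writes $\mu(F)=\bigl(\operatorname{rk}(F')\,\mu(F')+\operatorname{rk}(F'')\,\mu(F'')\bigr)/\operatorname{rk}(F)$ as a convex combination of $\mu(F')\le\mu$ and $\mu(F'')\le\mu$, so $\mu(F)\le\mu$. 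In every case $\mu(F)\le\mu(E)$, and $E$ is semistable.

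I do not expect a real obstacle, since the statement is formal; the only delicate points are the purity/torsion-freeness claims (sidestepped by staying in the torsion-free category) and the rank-zero edge cases in the convex-combination step (dispatched by the three-way case split above), neither of which is serious.
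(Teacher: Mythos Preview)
The paper states this lemma without proof, treating it as a standard preliminary fact; there is no argument in the text to compare against. Your proof is correct and is exactly the standard one: the first bullet follows from the sub/quotient characterizations of semistability once Lemma~\ref{ExactAdd} forces $\mu'=\mu=\mu''$, and the second bullet follows by splitting an arbitrary subsheaf $F\subset E$ along the filtration and using convexity of the slope. Your handling of the purity/torsion-freeness side conditions and of the degenerate rank-zero cases is also fine.
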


One of the more important technical tools is the Harder-Narasimhan filtration, which is defined below.
\begin{definition}
A \emph{Harder-Narasimhan filtration} for $E$ is an increasing filtration
\[0= E_0 \subset E_1\subset \cdots \subset E_d =E\]
such that the factors $F_i:=E_i/E_{i-1}, i = 1, \cdots, d$ are semistable sheaves with slopes $\mu_i$ satisfying 
\[\mu_{\max}(E):=\mu_1 > \mu_2 >\cdots >\mu_d =: \mu_{\min}(E).\]
\end{definition}

\begin{proposition}\cite[Theorem 1.3.4]{huybrechts}\label{HNF existunique}
Every torsion-free sheaf $E$ has a unique Harder-Narasimhan filtration.
\end{proposition}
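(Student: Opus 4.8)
The plan is to prove existence and uniqueness of the Harder--Narasimhan filtration by the standard argument via the notion of a \emph{maximal destabilizing subsheaf}. First I would establish the key structural lemma: every torsion-free sheaf $E$ admits a unique subsheaf $E_1 \subseteq E$ that is semistable, has maximal slope among all nonzero subsheaves, and is of maximal rank among subsheaves attaining that slope. To get this one considers the set of slopes $\{\mu(F) : 0 \neq F \subseteq E\}$; I would argue this set is bounded above, so that $\mu_{\max} := \sup \mu(F)$ is finite and in fact attained. Boundedness is the one genuinely non-formal input — it follows from the fact that subsheaves of a fixed torsion-free sheaf on a projective variety with bounded slope form a bounded family (or, more elementarily for the slope function here, from a comparison of $c_1(F)\cdot D_1\cdots D_{n-1}$ with a bound coming from $E$; see \cite[Lemma 1.3.5, Lemma 1.7.9]{huybrechts}). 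Once $\mu_{\max}$ is attained, among the finitely-many-up-to-relevant-data subsheaves realizing it, one takes one of maximal rank; call it $E_1$.

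Next I would check that $E_1$ has the required properties. It is semistable: any subsheaf $G \subsetneq E_1$ has $\mu(G) \le \mu_{\max} = \mu(E_1)$ by maximality of the slope. It is unique: if $E_1'$ were another such maximal destabilizing subsheaf, then using Lemma~\ref{ExactAdd} applied to the exact sequence relating $E_1$, $E_1 + E_1'$, and $E_1 \cap E_1'$ inside $E$ — or equivalently analyzing the induced map $E_1' \to E/E_1$ and invoking Lemma~\ref{HomVan} (the image is a quotient of a semistable sheaf of slope $\mu_{\max}$ mapping into $E/E_1$, whose subsheaf slopes must be... ) — one shows $E_1 + E_1'$ is again semistable of slope $\mu_{\max}$, contradicting maximality of rank unless $E_1 = E_1'$. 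Concretely: $E_1'/(E_1 \cap E_1') \hookrightarrow E/E_1$, and if the composite had nonzero image its slope would be $\ge \mu_{\max}$ (quotient of semistable $E_1'$) while also being a subsheaf slope of $E/E_1$; combined with $\mu(E_1 \cap E_1') \le \mu_{\max}$ and additivity, one forces $E_1 \subseteq E_1'$, then symmetry gives equality.

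Given the maximal destabilizing subsheaf, existence of the full Harder--Narasimhan filtration follows by induction on the rank. Set $E_1$ to be the maximal destabilizing subsheaf of $E$; the quotient $E/E_1$ is again torsion-free (this needs a small check: $E_1$ is saturated, since its saturation would have the same or larger slope and at least as large rank, forcing equality by the maximality properties), so by induction it has a Harder--Narasimhan filtration $0 = \bar E_1 \subset \cdots \subset \bar E_{d-1} = E/E_1$, and pulling this back to $E$ and prepending $E_1$ gives a filtration $0 \subset E_1 \subset \cdots \subset E$ with semistable quotients. The slope-decreasing condition $\mu_1 > \mu_2 > \cdots$ holds because $\mu_1 = \mu_{\max}(E)$ is strictly larger than $\mu_{\max}(E/E_1) = \mu_2$: indeed any subsheaf of $E/E_1$ of slope $\ge \mu_1$ would lift to a subsheaf of $E$ strictly larger than $E_1$ with slope $\ge \mu_{\max}$, contradicting maximality. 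Uniqueness of the whole filtration then follows because the first step of any Harder--Narasimhan filtration must equal the maximal destabilizing subsheaf (its first factor is semistable of the largest slope and is of maximal rank among such, by the decreasing-slopes condition), and then one applies the inductive uniqueness to $E/E_1$.

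The main obstacle is the boundedness-of-slopes step — showing $\sup\{\mu(F)\}$ is finite and attained — which is where the projectivity of $X$ and torsion-freeness of $E$ are really used; everything after that is formal manipulation with Lemmas~\ref{HomVan} and~\ref{ExactAdd} and with saturations. Since the statement is cited as \cite[Theorem 1.3.4]{huybrechts}, I would in practice reference that source for the boundedness input rather than reprove it, and spell out only the destabilizing-subsheaf construction and the induction.
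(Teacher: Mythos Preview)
The paper does not actually prove this proposition; it is stated with a citation to \cite[Theorem 1.3.4]{huybrechts} and no proof is given. The remark immediately following the proposition does indicate that the construction proceeds via the maximal destabilizing subsheaf, which is exactly the route you take, and your outline is the standard correct argument found in the cited reference. So your proposal is correct and matches the approach the paper defers to.
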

\begin{remark}
Just like the slope, the Harder-Narasimhan filtration is also dependent on the choice of the polarization $(D_1,\ldots,D_{n-1})$, and could be different with respect to polarizations. 
\end{remark}
\begin{remark}
We should also remark here that in the construction of the Harder-Narasimhan filtration for a torsion-free sheaf, an important step is establishing the existence and uniqueness of a \emph{maximal destabilizing subsheaf}. We use this notion a few times, so we give its definition here. Given a torsion-free sheaf $E$, then the \emph{maximal destabilizing subsheaf} $F \subset E$ is a semistable coherent subsheaf such that $\mu(F) \geq \mu(G)$ for all other subsheaves $G \subset E$, and moreover if $\mu(F) = \mu(G)$, then $F \supset G$. 
\end{remark}

Next, we wish to introduce the key tools of the paper, Kleiman's criterion and a related inequality. For the conclusion of Kleiman's criterion to make sense however, we should remind the reader of the technical property of boundedness.

\begin{definition}
Let $M$ be a set of coherent sheaves on $X$. Then $M$ is said to be \emph{bounded} if there is a scheme $B$ of finite-type and a coherent sheaf $F$ on $X \times B$ with $M \subset \{F_b \,| \, b \in B \text{ closed } \}$. Here $F_b$ is the pullback of $F$ along $X \times \{b\} \to X \times B$.
\end{definition}

\noindent Kleiman's criterion provides a very convenient way of determining whether or not a given family is bounded. 

\begin{theorem}[Kleiman's criterion] \cite[Theorem 1.7.8]{huybrechts} \label{Kleiman}
Let $\{E_\alpha\}$ be a family of coherent sheaves over $X$ with the same Hilbert polynomial $P$.
Then the family is bounded if and only if there are constants $C_i$, for $i=0,\ldots, \deg(P)$, such that for every $E_\alpha$ there exists an $E_\alpha$-regular sequence of hyperplanes $H_1,\ldots, H_{\deg(P)}$, satisfying 
\[
h^0(E_\alpha|_{\cap_{j\leq i}H_j}) \leq C_i, \forall i.
\]
\end{theorem}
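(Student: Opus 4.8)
The plan is to prove the two implications separately, and the "only if" direction is the easy one. Suppose the family $\{E_\alpha\}$ is bounded, so it is contained in $\{F_b \mid b \in B\}$ for some finite-type $B$ and some coherent sheaf $F$ on $X \times B$. After stratifying $B$ by locally closed subschemes we may assume $F$ is flat over $B$ (generic flatness, applied Noetherian-inductively). Since $B$ is finite type, it has finitely many irreducible components, and on each one the constructible function $b \mapsto h^0(E_b|_{Y})$ for $Y$ a complete intersection of general hyperplanes is bounded; taking the maximum over components gives the constants $C_i$. The only subtlety is producing, for each fixed $E_\alpha = F_b$, a single $E_\alpha$-regular sequence of hyperplanes with all the required $h^0$ bounds simultaneously — here one uses that for a fixed sheaf the locus of hyperplanes failing regularity, or failing the cohomological bound, is contained in a proper closed subset of the relevant product of dual projective spaces, so a general choice works and a uniform bound survives because we only shrink over a finite-type base.

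For the "if" direction, assume the cohomological bounds hold. The strategy is to reduce the dimension of $X$ by repeatedly cutting with the regular hyperplane sections provided by the hypothesis. First I would recall that for an $E$-regular hyperplane $H$ one has the short exact sequence $0 \to E(-1) \to E \to E|_H \to 0$, which controls the Hilbert polynomial of $E|_H$ (it is the first difference of $P$) and relates $h^0(E)$ to $h^0(E(-1))$ and $h^0(E|_H)$. Iterating, one sees that the restriction $E_\alpha|_{\cap_{j \le i} H_j}$ has Hilbert polynomial depending only on $P$, and in particular when $i = \deg(P)$ the restriction is a sheaf on a curve (or a zero-dimensional scheme) of bounded length/degree. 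The bound $h^0(E_\alpha|_{\cap_{j\le i}H_j}) \le C_i$ then feeds an inductive argument: bounding $h^0$ of all the successive hyperplane restrictions, together with the fixed Hilbert polynomial, forces $E_\alpha$ itself to be $m$-regular (in the sense of Castelnuovo--Mumford) for an $m = m(P, \{C_i\})$ independent of $\alpha$. This is the heart of the proof and where I expect the main obstacle to lie: one must convert the collection of $h^0$-bounds on restrictions into a uniform Castelnuovo--Mumford regularity bound, which requires a careful induction on dimension keeping track of how regularity of $E$ relates to regularity of $E|_H$ and vanishing of the relevant $H^i$.

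Once a uniform regularity bound $m$ is in hand, the conclusion is standard: every $m$-regular sheaf $E_\alpha$ with Hilbert polynomial $P$ is globally generated after twisting by $\mathcal{O}_X(m)$, hence is a quotient $\mathcal{O}_X(-m)^{\oplus N} \twoheadrightarrow E_\alpha$ with $N = P(m)$ fixed. All such quotients are parametrized by a single Quot scheme $\mathrm{Quot}(\mathcal{O}_X(-m)^{\oplus N}, P)$, which Grothendieck proved is of finite type over the base field, and the universal quotient sheaf on $X \times \mathrm{Quot}$ exhibits $\{E_\alpha\}$ as bounded. So the overall shape is: hypothesis $\Rightarrow$ uniform regularity $\Rightarrow$ common Quot scheme $\Rightarrow$ boundedness. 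I would present the regularity induction as the main lemma and cite Grothendieck for the finiteness of Quot; the regularity induction is precisely the step that deserves the most care.
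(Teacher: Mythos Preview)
The paper does not actually prove this theorem: Theorem~\ref{Kleiman} is stated with a citation to \cite[Theorem~1.7.8]{huybrechts} and is used as a black box throughout, so there is no ``paper's own proof'' to compare against. Your outline is the standard argument one finds in Huybrechts--Lehn (and ultimately in Kleiman): the forward direction via semicontinuity over a finite-type base, and the converse via a dimension induction converting the $h^0$-bounds into a uniform Castelnuovo--Mumford regularity bound, followed by the Quot-scheme argument. That sketch is correct in shape; the only place where real work hides is, as you say, the regularity induction, where one must show that bounding $h^0$ of the successive restrictions forces $H^i(E_\alpha(m-i))=0$ for a uniform $m$ --- this uses the restriction exact sequence together with Mumford's lemma that $m$-regularity of $E|_H$ plus vanishing of $H^0(E(m-1)) \to H^0(E|_H(m-1))$'s cokernel (controlled by your $C_0$) propagates to $m$-regularity of $E$. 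If you intend to include a full proof, that lemma is the step to spell out; otherwise, citing \cite{huybrechts} as the paper does is entirely appropriate here.
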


Here we recall that a hyperplane $s \in H^0(X,\mathcal{O}_X(1))$ is said to be \emph{$E$-regular} if the map $$E(-1) \overset{\cdot s}{\to} E$$ is injective. A sequence $\{s_1,...,s_l\} \subset H^0(X,\mathcal{O}_X(1))$ is $E$-regular if $s_i$ is $E/(s_1,..,s_{i-1})E(-i)$-regular for all $1\leq i\leq l$.

\begin{lemma}
\label{H0BoundedByMuMax}\cite[Lemma 3.3.2]{huybrechts}
Let $E$ be a torsion-free sheaf of rank $r$. Then for any $E$-regular sequence of hyperplane sections $H_1, \cdots, H_n$, the following inequality holds for $i=1, \cdots, n$:
\[ \frac{h^0(X_i, E_i)}{r\deg(X)} \leq \frac 1{i!}\left[\frac{\mu_{\max} (E_1)}{\deg(X)}+i\right]_+^i,\]
where $X_i\in |H_1|\cap \cdots \cap |H_{n-i}|$, $E_i = E|_{X_i}$, and $[x]_+ = \max\{ 0,x \}$ for any real number $x$. 
\end{lemma}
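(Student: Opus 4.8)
The plan is to induct on $i$, the number of hyperplane sections we have already restricted along (equivalently, on the dimension of $X_i$). The base case is $i=1$: here $X_1$ is a curve, $E_1 = E|_{X_1}$ is a sheaf on a smooth projective curve (torsion-free since $H_1,\dots,H_{n-1}$ is an $E$-regular sequence, so the successive restrictions remain torsion-free), and one must show $h^0(X_1,E_1)/(r\deg X) \le [\mu_{\max}(E_1)/\deg X + 1]_+$. On a curve, $h^0$ of a semistable bundle of non-positive slope is controlled, and in general one passes to the maximal destabilizing subsheaf: if $\mu_{\max}(E_1) < 0$ then... actually the cleanest route is to use the Harder--Narasimhan filtration of $E_1$ directly and bound $h^0$ of each graded piece $F_j$ on the curve $X_1$ by $\operatorname{rk}(F_j)\cdot(\mu(F_j)/\deg X + 1)_+$, using that a semistable sheaf of slope $\le -\deg(X_1) = -\deg X$ (the curve's degree) has no sections, combined with Riemann--Roch / Clifford-type estimates; summing over $j$ and using $\mu(F_j) \le \mu_{\max}(E_1)$ gives the claim.

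For the inductive step, I would fix a general hyperplane section and use the short exact sequence
\[
0 \to E_{i}(-1) \to E_{i} \to E_{i-1} \to 0
\]
where $E_{i-1} = E_i|_{X_{i-1}}$ with $X_{i-1} \in |H_1|\cap\cdots\cap|H_{n-i+1}|$ a general member (this is where $E$-regularity of the sequence is used: the restriction maps are injective). Taking cohomology gives $h^0(X_i,E_i) \le \sum_{k\ge 0} h^0(X_{i-1}, E_{i-1}(k))$ after iterating, or more efficiently one estimates $h^0(X_i,E_i)$ by a telescoping sum. The key auxiliary fact needed is that $\mu_{\max}(E_{i-1}(k)) = \mu_{\max}(E_i|_{X_{i-1}}) + k\deg(X_{i-1})$ behaves predictably, and — crucially — that restricting to a general hyperplane does not increase $\mu_{\max}$ too badly; for this one invokes a Grauert--M\"ulich-type control, but at the level of generality of this lemma the honest statement is simply the inductive hypothesis applied on $X_{i-1}$. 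Summing the geometric-like series $\sum_k \frac{1}{(i-1)!}[\mu_{\max}(E_i|_{X_{i-1}})/\deg X + k + (i-1)]_+^{i-1}$ over the relevant range of $k$, and comparing it to an integral $\int_0^{\infty}$ (or rather recognizing $\sum_{k} \binom{k+i-1}{i-1}$-type combinatorics), produces the factor $\frac{1}{i!}[\,\cdot\,]_+^i$. The identity $\sum_{m=0}^{N} \frac{(m+a)^{i-1}}{(i-1)!} \approx \frac{(N+a)^i}{i!}$ is the combinatorial heart of the bookkeeping.

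The main obstacle, I expect, is not the cohomological long exact sequence manipulation but rather (a) correctly handling the $[\,\cdot\,]_+$ truncation when $\mu_{\max}$ is negative — one needs the sum over $k$ to start precisely where the bracket becomes positive, so that the telescoping matches the claimed closed form exactly rather than up to a constant; and (b) justifying that a \emph{general} hyperplane in $|H_{n-i+1}|$ is simultaneously $E_i$-regular and such that $E_{i-1}$ is again torsion-free with $\mu_{\max}(E_{i-1})$ not exceeding what the induction allows. Point (a) is the genuinely delicate combinatorial identity; I would isolate it as a small self-contained numerical sublemma: for any real $x$ and integer $j \ge 1$, $\sum_{k \ge 0} \frac{1}{j!}[x-k]_+^{j} \le \frac{1}{(j+1)!}[x+1]_+^{j+1}$, proved by comparison with $\int$. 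Point (b) is standard and follows from Bertini together with the definition of $E$-regular sequence already recorded after Theorem \ref{Kleiman}. With the numerical sublemma in hand, the induction closes cleanly.
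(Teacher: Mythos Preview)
The paper does not supply its own proof of this lemma; it merely cites \cite[Lemma~3.3.2]{huybrechts} and moves on. Your induction-on-$i$ outline, with the base case on the curve $X_1$ and the inductive step via the restriction sequence $0 \to E_i(-1) \to E_i \to E_{i-1} \to 0$, is exactly the standard argument from that reference, and your integral-comparison sublemma $\sum_{k\ge 0}\tfrac{1}{j!}[x-k]_+^{j} \le \tfrac{1}{(j+1)!}[x+1]_+^{j+1}$ is the correct combinatorial core.

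One point of confusion in your write-up deserves correction. You worry that ``restricting to a general hyperplane does not increase $\mu_{\max}$ too badly'' and mention Grauert--M\"ulich. This is a red herring: look again at the right-hand side of the inequality --- it is phrased in terms of $\mu_{\max}(E_1)$, the maximal slope \emph{on the curve} $X_1$, not $\mu_{\max}(E_i)$. Since $(E_{i-1})|_{X_1} = E_1$ and $(E_{i-1}(-k))|_{X_1} = E_1(-k)$, the quantity $\mu_{\max}(E_1)$ is fixed throughout the induction, and the twist shifts it by exactly $-k\deg(X)$. No comparison of $\mu_{\max}$ across dimensions is needed at all, and no Grauert--M\"ulich input enters. (Indeed, in the paper the logic runs the other way: this lemma is one of the tools used to \emph{exploit} the restriction estimates of Theorem~\ref{RestrictionSlopeEstimate}, so it had better not depend on them.) Once you drop that detour, your concern~(b) about ``$\mu_{\max}(E_{i-1})$ not exceeding what the induction allows'' also evaporates, and the argument is clean.

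For the vanishing $h^0(E_i(-m)) = 0$ that terminates the telescoping sum, you can argue directly: a nonzero section gives $\mathcal{O}_{X_i}(m) \hookrightarrow E_i$, and restricting along the $E$-regular sequence down to $X_1$ yields $\mathcal{O}_{X_1}(m) \hookrightarrow E_1$, forcing $m\deg(X) \le \mu_{\max}(E_1)$.
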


Combining the previous two results, provided we can get a uniform bound on $\mu_{\max}$, will prove the boundedness for semistable sheaves with a fixed Hilbert polynomial. To get such estimates, we will often use the following lemma. 
\begin{lemma}
\label{SESMuMaxBound}
If $0 \to A  \to B \to C \to 0$ is an exact sequence of torsion-free sheaves, then $ \mu_{\max}(B) \leq \max \{ \mu_{\max}(A), \mu_{\max}(C) \}$. 
\end{lemma}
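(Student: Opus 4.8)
The plan is to test the inequality directly against the maximal destabilizing subsheaf of $B$, using additivity of rank and degree along a short exact sequence. Let $B' \subset B$ be the maximal destabilizing subsheaf, so that $B'$ is semistable and $\mu(B') = \mu_{\max}(B)$; this exists and is unique since $B$ is torsion-free. Restricting the surjection $B \to C$ to $B'$, set $B'' := \operatorname{im}(B' \to C) \subset C$ and $A' := \ker(B' \to C) = B' \cap A \subset A$, which yields a short exact sequence $0 \to A' \to B' \to B'' \to 0$. Since $A'$ is a subsheaf of the torsion-free sheaf $A$ and $B''$ is a subsheaf of the torsion-free sheaf $C$, both $A'$ and $B''$ are torsion-free, so the slope and rank of each behave as in Lemmas \ref{ExactAdd} and \ref{EquivChar}.

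First I would dispose of the two degenerate cases. If $A' = 0$, then $B' \cong B''$ is a subsheaf of $C$, so $\mu_{\max}(B) = \mu(B') = \mu(B'') \leq \mu_{\max}(C)$. If $B'' = 0$, then $B' \subset A$, so $\mu_{\max}(B) = \mu(B') \leq \mu_{\max}(A)$. In either case the claimed bound holds.

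In the remaining case both $A'$ and $B''$ are nonzero, hence of strictly positive rank. Applying Lemma \ref{ExactAdd} to $0 \to A' \to B' \to B'' \to 0$ gives $\mathrm{rk}(B')\,\mu(B') = \mathrm{rk}(A')\,\mu(A') + \mathrm{rk}(B'')\,\mu(B'')$ with $\mathrm{rk}(A') + \mathrm{rk}(B'') = \mathrm{rk}(B')$, so $\mu(B')$ is a convex combination, with strictly positive weights, of $\mu(A')$ and $\mu(B'')$. Since $A' \subset A$ and $B'' \subset C$, we have $\mu(A') \leq \mu_{\max}(A)$ and $\mu(B'') \leq \mu_{\max}(C)$, whence
\[
\mu_{\max}(B) = \mu(B') \leq \max\{\mu(A'),\mu(B'')\} \leq \max\{\mu_{\max}(A),\mu_{\max}(C)\},
\]
as desired.

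I do not anticipate a real obstacle here; the only points needing care are recording that the image and kernel sheaves $B''$ and $A'$ are again torsion-free (so that their slopes and ranks are well behaved) and handling the degenerate cases where one of them vanishes separately, since then $\mu(B')$ is not literally a weighted average of two finite quantities.
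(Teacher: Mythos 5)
Your argument is correct and is the standard one: intersect the maximal destabilizing subsheaf $B'\subset B$ with $A$, pass to the induced short exact sequence $0\to A'\to B'\to B''\to 0$, and use the rank-weighted additivity of Lemma \ref{ExactAdd} together with $\mu(A')\leq\mu_{\max}(A)$, $\mu(B'')\leq\mu_{\max}(C)$; the degenerate cases $A'=0$ and $B''=0$ are handled properly. The paper states this lemma without proof, so there is nothing to compare against; the only cosmetic remark is that the citation of Lemma \ref{EquivChar} is not actually needed anywhere in your argument.
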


At the end of the subsection, we introduce the following definition that helps visualize the Harder-Narasimhan filtration.
\begin{definition}[Harder-Narasimhan polygon]
\label{DefinitonHNP}
Consider a torsion-free sheaf $E$ on $X$ with its Harder-Narasimhan filtration $0 = E_0 \subset \dots \subset E_m = E.$ Let $p(E_i) = (\mathrm{rk}(E_{i}),\deg(E_i))$. We define the \emph{Harder-Narasimhan polygon} of $E$, denoted as $\mathrm{HNP}(E)$, to be the convex hull of the points $p(E_0),\dots,p(E_m)$ in $\mathbb{R}^2$. See Figure \ref{fig:HNPExample}.
\end{definition}

\begin{figure}[h!]
\begin{tikzpicture}
\node at (0,0) [below] {$p(E_0) = (0,0)$};
\fill (0,0) circle (2pt);

\draw (0,0) -- node [above=2pt,sloped] {\footnotesize slope = $\mu(E_1/E_0)$} (3,3);

\node at (3,3) [left] {$p(E_1)$};
\fill (3,3) circle (2pt);

\draw (3,3) -- node [above=2pt, sloped] {\footnotesize slope = $\mu(E_2/E_1)$} (6,4);

\node at (6,4) [above] {$p(E_2)$};

\fill (6,4) circle (2pt);
\draw [dashed] (6,4) -- (8,2);
\node at (8,2) [right] {$p(E_m)$};
\fill (8,2) circle (2pt);
\draw (8,2) -- node [below=2pt,sloped] {\footnotesize slope = $\mu(E)$} (0,0);
\end{tikzpicture}
\caption{An illustration of the Harder Narasimhan Polygon}
\label{fig:HNPExample}
\end{figure}
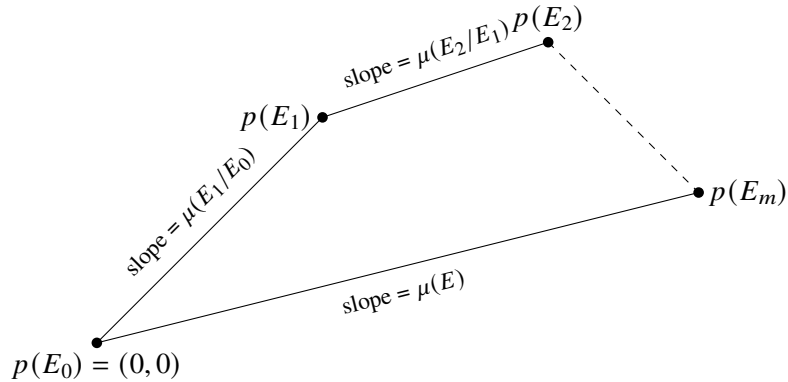

In fact, the Harder-Narasimhan polygon is a convex polygon with vertices $p(E_0), \dots, p(E_m)$, and the slope of the line segment $\overline{p(E_i) p(E_{i+1})}$ is $\mu(E_i/E_{i+1})$.

\begin{remark}
\label{RemarkHNPLiesAboveSubsheaves}
If $F \subset E$ is a torsion-free subsheaf $F$ inside of a semistable sheaf $E$, then the point $p(F) = (\mathrm{rk}(F),\deg(F))$ lies below the polygon $\HNP(E)$ i.e.~for any $x\in [0,\mathrm{rk}(F)]$, we have
\[
\sup\{y~|~(x,y)\in \HNP(F)\} \leq \sup\{y~|~(x,y)\in \HNP(E)\}.
\]

\end{remark}

\subsection{Approximation of nef polarizations}\label{subsec approx}
In this subsection, we approximate the Harder-Narasimhan filtration for a polarization consisting of \emph{nef divisors}, that will be used later.
As this is a nonstandard setup and does not appear in most of the literature, we provide the full details of the proof.
We assume the existence and uniqueness of the (absolute) Harder-Narasimhan filtration with respect to an \emph{ample polarization} as in last subsection.

Our first main result this subsection is the following approximation result.
\begin{theorem}\label{approx}
	Let $(D_1,\ldots,D_{n-1})$ be a set of nef divisors, and $H$ an ample divisor.
	Assume $E$ is a torsion-free coherent sheaf over $X$.
	Then there exists a positive number $\epsilon$, and a (unique) filtration of saturated subsheaves 
	\[
	0=E_0\subset E_1\subset \cdots E_l=E,
	\]
	such that for any $t\in (0,\epsilon)$, the above is the Harder-Narasimhan filtration for the ample polarization $(D_1+tH,\ldots, D_{n-1}+tH)$.
\end{theorem}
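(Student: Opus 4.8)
The plan is to reduce the statement to a finite combinatorial problem about the possible ``candidate filtrations'' that could occur as Harder--Narasimhan filtrations, and then to use the fact that the slope of a fixed saturated subsheaf is an affine-linear function of the perturbation parameter $t$. First I would record the key observation: for a fixed coherent subsheaf $F \subseteq E$, the quantity $\deg_{(D_1 + tH, \ldots, D_{n-1}+tH)}(F) = c_1(F) \cdot (D_1+tH)\cdots(D_{n-1}+tH)$ is a polynomial in $t$ of degree $\le n-1$ whose value at $t=0$ is the ``nef-degree'' $c_1(F)\cdot D_1 \cdots D_{n-1}$; dividing by $\mathrm{rk}(F)$ gives that $\mu_t(F)$ is a polynomial in $t$, and for any two subsheaves $F, F'$ the difference $\mu_t(F) - \mu_t(F')$ is a polynomial in $t$ that is either identically zero or has finitely many roots. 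So on a punctured neighborhood of $0$, small enough to avoid all these roots, the relative order of the slopes of $F$ and $F'$ is constant.

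The difficulty is that there are infinitely many subsheaves of $E$, so one cannot naively intersect infinitely many punctured neighborhoods. To get around this I would first bound the set of subsheaves that can possibly appear. The standard input here is boundedness of the family of saturated subsheaves of $E$ with bounded slope (with respect to some fixed ample polarization, e.g.\ $D_1 + H, \ldots, D_{n-1}+H$): the maximal destabilizing subsheaf, and all the steps of any Harder--Narasimhan filtration, for each polarization $(D_i + tH)$ with $t \in (0,1]$, have slope (w.r.t.\ the fixed ample polarization $D_i+H$) bounded above by $\mu_{\max}(E)$ and bounded below in terms of $\mu_{\min}(E)$ and the ranks — uniformly in $t$. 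By Grothendieck's boundedness / the theory of the Quot scheme, the saturated subsheaves with these numerical constraints come in a bounded family, hence there are only finitely many \emph{numerical types} $(\mathrm{rk}, c_1)$ occurring, and I only need to compare finitely many slope functions $\mu_t(\cdot)$. Choose $\epsilon > 0$ smaller than every root in $(0,1]$ of every pairwise difference of these finitely many slope polynomials (and than $1$). Then for all $t \in (0,\epsilon)$, the poset structure ``which numerical type has strictly larger / equal slope'' is independent of $t$.

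With $\epsilon$ fixed, I would then argue that the Harder--Narasimhan filtration itself is independent of $t \in (0,\epsilon)$. Pick any $t_0 \in (0,\epsilon)$ and let $0 = E_0 \subset E_1 \subset \cdots \subset E_l = E$ be the Harder--Narasimhan filtration for $(D_i + t_0 H)$. Each $E_i$ is saturated, the factors $F_i = E_i/E_{i-1}$ are $(D_i + t_0 H)$-semistable with strictly decreasing slopes. I claim this same filtration works for every $t \in (0,\epsilon)$. Semistability of each $F_i$ for the polarization $(D_i + tH)$: any subsheaf $G \subseteq F_i$ has, at $t_0$, $\mu_{t_0}(G) \le \mu_{t_0}(F_i)$, and since $G$ and $F_i$ (or their saturations in $F_i$, which have the same $c_1$ and rank up to the torsion the saturation kills — handle this by passing to saturations, whose numerical types lie in our finite set) are among the finitely many controlled types, the inequality $\mu_t(G) \le \mu_t(F_i)$ persists for all $t\in(0,\epsilon)$ — here I use that if two slope polynomials satisfy $\le$ at one point of $(0,\epsilon)$ and have no root in $(0,\epsilon)$, they satisfy $\le$ throughout, and if they are equal at $t_0$ they are identically equal. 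Strict decrease $\mu_t(F_1) > \cdots > \mu_t(F_l)$ follows the same way from strict decrease at $t_0$. By uniqueness of the Harder--Narasimhan filtration (Proposition \ref{HNF existunique}) for each ample polarization $(D_i + tH)$, this is \emph{the} Harder--Narasimhan filtration for all $t \in (0,\epsilon)$, which is exactly the assertion; uniqueness of the filtration $0 = E_0 \subset \cdots \subset E_l = E$ in the statement is then immediate since it is forced to agree with the (unique) Harder--Narasimhan filtration at any single $t \in (0,\epsilon)$.

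The main obstacle I anticipate is making the boundedness step rigorous: one must ensure the slopes (with respect to a fixed \emph{ample} reference polarization) of all the subsheaves that arise — maximal destabilizers and HN steps across the whole range $t \in (0,1]$ — are bounded above \emph{and} below uniformly in $t$, so that Grothendieck's lemma applies and only finitely many numerical types occur. The upper bound is the easier direction; the lower bound requires an argument bounding $\mu_{\min}$ of the quotients, e.g.\ via the ranks and the total degree, and some care is needed because the relevant inequalities are stated for ample polarizations while $D_1, \ldots, D_{n-1}$ are only nef — one wants to phrase everything in terms of the genuinely ample $D_i + H$ and use continuity to control the $t \to 0$ behavior.
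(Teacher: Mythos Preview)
Your overall strategy --- reduce to finitely many ``candidate'' numerical types via Grothendieck/Quot boundedness, then exploit that the slope difference of two fixed subsheaves is a polynomial in $t$ with finitely many zeros --- is a natural wall-crossing argument, and it is genuinely different from what the paper does. The difficulty you flag at the end is, however, a real gap and not just a technicality. Grothendieck's lemma bounds the saturated subsheaves $F\subset E$ whose slope with respect to a \emph{fixed ample} reference polarization is bounded below. For the maximal destabilizer $E_1^{(t)}$ at parameter $t$ you only know $\mu_t(E_1^{(t)})\ge \mu_t(E)$; translating this into a lower bound on, say, $\mu_{H^{n-1}}(E_1^{(t)})=a_{n-1}(E_1^{(t)})$ (the leading coefficient of the slope polynomial) gives, using only the upper bounds $a_i(F)\le M_i$, an estimate of the shape
\[
a_{n-1}(E_1^{(t)})\;\ge\; t^{-(n-1)}\bigl(a_0(E)-M_0\bigr)+O(t^{-(n-2)}),
\]
which blows down to $-\infty$ as $t\to 0$ whenever some subsheaf has strictly larger nef-slope than $E$ (the generic situation when the $D_i$ are not ample). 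So the usual boundedness input does not give you a finite set of numerical types valid all the way down to $t=0$, and the wall-crossing heuristic that ``the HN filtration changes only finitely many times'' is exactly what can fail as the polarization leaves every compact subset of the ample cone.

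The paper sidesteps this entirely by never trying to bound the family of subsheaves. Instead it observes that for every subsheaf $F\subset E$ the slope $\mu_t(F)=\sum_{i=0}^{n-1} a_i(F)\,t^i$ has coefficients in $\tfrac{1}{r!}\mathbb{Z}$ and each coefficient $a_i(F)$ (being a finite sum of slopes with respect to fixed nef/ample polarizations) is bounded \emph{above} uniformly in $F$. That alone --- discreteness plus an upper bound, with no lower bound needed --- is enough to guarantee that the set of slope polynomials has a maximum in the lexicographic order on coefficients $(a_0,a_1,\ldots,a_{n-1})$, and an elementary claim shows that the lex-maximal polynomial dominates all others in value for $t$ in some interval $(0,\epsilon)$. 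Taking $E_1$ to be a preimage of this lex-max polynomial of maximal rank gives the common maximal destabilizing subsheaf for all small $t$, and one finishes by induction on rank. If you want to repair your argument, the cleanest fix is to replace the Grothendieck step by this lex-order observation; once you have the constant maximal destabilizer, the rest of your outline (induction on rank, uniqueness from Proposition~\ref{HNF existunique}) goes through.
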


\begin{proof}
	Denote $H_i(t)$ to be the $\mathbb{R}$-divisor $D_i+tH$, which is ample \cite[Corollary 1.4.10]{lazarsfeld}.
	For a torsion-free sheaf $F$, we denote $\deg_t(F)$ and $\mu_t(F)$ to be the degree and slope of $F$ with respect to the polarization $(H_1(t),\ldots, H_{n-1}(t))$.
	Note that when $t=0$, we get exactly the degree and slope with respect to the polarization $(D_1,\cdots ,D_{n-1})$, which we write in short as $\deg(F)$ and $\mu(F)$ separately.
	
	We first notice that it suffices to show the following: there exists $\epsilon>0$ and a saturated subsheaf $E_1\subset E$, such that $E_1$ is the maximal destabilizing subsheaf of $E$ for $(H_1(t),\ldots, H_{n-1}(t))$ and any $t\in (0,\epsilon)$.
	The uniqueness of $E_1$ follows from the uniqueness of the Harder-Narasimhan filtration for the ample polarization $(H_1(t),\cdots H_{n-1}(t))$ as in Proposition \ref{HNF existunique}.
	
	To find such $E_1$, let $\mathcal{C}$ denote the set of torsion-free subsheaves of $E$ and consider the following map of sets
	\begin{align*}
		\mu_t(-):\mathcal{C} & \longrightarrow \{\text{Degree } n-1 \text{ polynomials in }t\};\\
		E' &\longmapsto \mu_t(E')=\frac{c_1(E')H_1(t)\cdots H_{n-1}(t)}{\mathrm{rk}(E')}.
	\end{align*}
	For each $E'\subset E$, the image $\mu_t(E')$ is the polynomial $a_0+a_1t+\cdots+ a_{n-1} t^{n-1}$, where we have
	\begin{align*}
	&a_0(E')=\mu(E');\\
	&a_1(E')=\sum_{1\leq i\leq n-1} \mu_{(H,D_1,\ldots, \hat{D}_i,\ldots,D_{n-1})} (E').\\
	\end{align*}
	For general $j\leq n$, the coefficient $a_j$ is the sum of slopes of $E'$ with respect to all possible choice of polarizations $(H,\ldots,H,D_{i_1},\ldots,D_{i_{n-1-j}})$, such that each of the first $j$ entries of the polarization are all equal to $H$.
	Moreover, each polynomial $\mu_t(E')$ has coefficients in $\frac{1}{r!}\mathbb{Z}$, whose coefficients are finite linear combinations of slopes of $E'$ for a fixed, finite number of choices of polarization.
	In particular, for each $0\leq i\leq n$, the collection of coefficients $\{a_i(E')~|~E'\subset E\}$ is bounded above.
	\footnote{To see this, we first note that since $a_i(E')$ is a finite positive linear combination of $\mu_{H^i, D_{j_1},\ldots,D_{j_{n-1-i}}}(E')$, it suffices to bound each  slope for the given nef polarization. The latter can  be proved by induction on ranks of $E$ as in the classical case: To check the case when $E$ is a line bundle, it suffices to show the set of slopes of sub line bundles is bounded by slope of $E$ itself, which is true again by approximation via adding each $D_{j_l}$ by $tH$ and making $t$ approach to zero. Here we observe that the inequality holds for any $t>0$, so by the continuity, we get the upper bound.
	The general case of $E$ follows from the induction hypothesis on $E'$ and $E''$ in a short exact sequence of vector bundles $E'\rightarrow E\rightarrow E''$. }
	Here we denote the subset of polynomials consisting of the image of the map $\mu_t(-)$ by $\mathcal{D}$. 
	
	We then define a lexicographic order on elements in $\mathcal{D}$ as follows:
	\begin{multline*}
	W_1(t)=\sum^d a_i t^i < W_2(t)=\sum^d b_i t^i \text{ if } \\ a_0= b_0,\cdots, a_{i-1}=b_{i-1}, a_i < b_i \text{ for some } i.    
	\end{multline*}
	Since the coefficients of polynomials in $\mathcal{D}$ are bounded above, we can find the maximum polynomial $P(t)$ in $\mathcal{D}$. Moreover, since the degree of a sheaf is a discrete quantity, we can find a subsheaf $E_1$ in the preimage of $\mu_t^{-1}(P(t))$ whose rank is maximal.
	
	Finally, we prove that $E_1$ is exactly the maximal rank destabilizing subsheaf of $E$ with respect to $(H_1(t),\ldots,H_{n-1}(t))$ for $t$ small enough.
	To show this, it suffices to show the following, whose complete details are left to the reader:
	\begin{claim}
		Let $\mathcal{D}$ be a set of degree $n$ polynomials with coefficients bounded above, and whose coefficients are in $\frac{1}{N}\mathbb{Z}$ for some positive integer $N$. Then a polynomial $P(t)\in \mathcal{D}$ is maximum with respect to the lexicographic order if and only if for small enough $t>0$ $P(t)>Q(t)$ (in $\mathbb{R}$) for all other $Q(t) \in \mathcal{D}$.
	\end{claim}
The idea of the claim is the following: as $t$ approaches zero, higher power terms are dominated by those with lower powers, so the polynomial that has the largest first several terms under the lexicographic order will also have maximum value in $\mathbb{R}$ for $t$ small enough.
\end{proof}
The above filtration of subsheaves $\{E_i\}$ in general fails to be the Harder-Narasimhan filtration with respect to $(D_1,\ldots, D_{n-1})$ for $t=0$.
However, the filtration is in fact a \emph{weak Harder-Narasimhan filtration} with respect to $(D_1,\ldots, D_{n-1})$,
in the sense that each $E_i/E_{i-1}$ is semistable with respect to $(D_1,\ldots, D_{n-1})$, and we have inequalities
\[
\mu(E_{i+1}/E_i) \geq \mu(E_i/E_{i-1}), \,\, 1\leq i\leq l-1.
\]
Note that we do not have strict inequalities here, which is part of the definition of the Harder-Narasimhan filtration.

The semistability of the subsheaves $E_i$ can be seen as follows:
\begin{lemma}\label{semi approx}
	Let $E'$ be a subsheaf of the torsion-free sheaf $E$ such that for $t>0$ small enough, $E'$ is semistable with respect to $(H_1(t),\ldots, H_{n-1}(t))$.
	Then $E'$ is semistable with respect to $(D_1,\ldots, D_{n-1})$.
\end{lemma}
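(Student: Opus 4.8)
The plan is a short continuity argument built on the polynomial nature of the slope function $\mu_t(-)$ established in the proof of Theorem~\ref{approx}. As a trivial preliminary, I would note that $E'$, being a subsheaf of the torsion-free sheaf $E$, is itself torsion-free and hence pure of dimension $n$; so the purity clause in the definition of semistability is automatic, and only the slope inequality needs to be checked.

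I would then argue by contradiction. Suppose $E'$ is not semistable with respect to the nef polarization $(D_1,\ldots,D_{n-1})$; writing $\mu=\mu_0$ for the slope at $t=0$, there is a nonzero proper subsheaf $F\subset E'$ with $\mu(F)>\mu(E')$. Consider
\[
P(t):=\mu_t(F)-\mu_t(E').
\]
Exactly as in the proof of Theorem~\ref{approx}, $P$ is a real polynomial in $t$ of degree at most $n-1$, and its constant term is $P(0)=\mu(F)-\mu(E')>0$. Hence, by continuity, there is $\epsilon_1>0$ with $P(t)>0$, i.e.\ $\mu_t(F)>\mu_t(E')$, for all $t\in(0,\epsilon_1)$.

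By hypothesis there is also $\epsilon_0>0$ such that $E'$ is semistable with respect to $(H_1(t),\ldots,H_{n-1}(t))$ for all $t\in(0,\epsilon_0)$. Choosing any $t\in(0,\min\{\epsilon_0,\epsilon_1\})$, the subsheaf $F\subset E'$ violates this semistability --- a contradiction. Thus $E'$ is semistable with respect to $(D_1,\ldots,D_{n-1})$. (If one prefers the quotient formulation of semistability from Lemma~\ref{EquivChar}, the same argument applies verbatim with a destabilizing quotient $E'\to G$ in place of $F$.)

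I do not expect a genuine obstacle here: the whole content is the continuity of a single polynomial in $t$, together with the already-established fact that $t\mapsto \mu_t(F)-\mu_t(E')$ is polynomial in $t$. The only care needed is in the two routine reductions above --- the automatic purity of $E'$, and the choice of a $t$ lying simultaneously in the interval where the hypothesis holds and in the interval where the strict inequality $P(t)>0$ holds.
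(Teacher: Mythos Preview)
Your argument is correct and is essentially identical to the paper's own proof: both proceed by contradiction, take a destabilizing subsheaf $F\subset E'$ at $t=0$, and use the continuity of the polynomial $t\mapsto \mu_t(F)-\mu_t(E')$ to propagate the strict inequality to small $t>0$, contradicting the assumed semistability there. Your write-up is slightly more detailed (noting purity explicitly and carefully choosing $t$ in the intersection of two intervals), but there is no substantive difference in method.
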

\begin{proof}
	Assume $E'$ admits a subsheaf $E'_1$ whose slope with respect to $(D_1,\ldots, D_{n-1})$ is strictly larger than that of $E'$.
	By the continuity of the polynomial $\mu_t$ in $t$, we get the inequality $\mu_t(E'_1)> \mu_t(E')$ for $t>0$ small enough, which contradicts our assumption.
\end{proof}
The same strategy above in fact implies the existence and uniqueness of the Harder-Narasimhan filtration for nef divisors, generalizing the result for ample ones.
\begin{corollary}
Let $(D_1,\ldots,D_{n-1})$ be a set of nef divisors of $X$.
Then any torsion-free coherent sheaf $E$ over $X$ admits a unique Harder-Narasimhan filtration with respect to the polarization $(D_1,\ldots,D_{n-1})$.
\end{corollary}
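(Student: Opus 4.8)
The plan is to bootstrap both existence and uniqueness from the approximation result Theorem~\ref{approx}, which already produces a filtration that is ``almost'' Harder--Narasimhan for $(D_1,\ldots,D_{n-1})$.

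\textbf{Existence.} Fix an ample divisor $H$ and let $0=E_0\subset E_1\subset\cdots\subset E_l=E$ be the filtration of Theorem~\ref{approx}, so that it is the Harder--Narasimhan filtration for $(D_1+tH,\ldots,D_{n-1}+tH)$ for every sufficiently small $t>0$. By Lemma~\ref{semi approx} each graded piece $E_i/E_{i-1}$ is semistable with respect to $(D_1,\ldots,D_{n-1})$, and letting $t\to 0^+$ in the strict slope inequalities valid for $t>0$, together with the continuity of $t\mapsto\mu_t$, shows that the slopes $\mu(E_i/E_{i-1})$ are weakly monotone --- this is the ``weak Harder--Narasimhan filtration'' recorded after Theorem~\ref{approx}. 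To upgrade it to an honest one I would \emph{collapse} adjacent steps of equal slope: whenever $\mu(E_i/E_{i-1})=\mu(E_{i+1}/E_i)$, delete $E_i$; the new graded piece $E_{i+1}/E_{i-1}$ is an extension of two semistable sheaves of the same slope, hence semistable of that slope, and weak monotonicity is preserved. Iterating until all consecutive slopes strictly decrease yields a filtration with semistable graded pieces and strictly decreasing slopes, i.e.\ a Harder--Narasimhan filtration for $(D_1,\ldots,D_{n-1})$. (Alternatively one may replay the lexicographic argument in the proof of Theorem~\ref{approx} verbatim, with the single number $\mu$ replacing the polynomial $\mu_t$; the only nontrivial input, boundedness from above of the slopes of subsheaves of $E$, is precisely the footnote in that proof, and it directly produces the maximal destabilizing subsheaf.)

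\textbf{Uniqueness.} The three formal lemmas underlying the classical uniqueness argument --- additivity of $\deg$ and $\mathrm{rk}$ along short exact sequences (Lemma~\ref{ExactAdd}), semistability of an extension of equal-slope semistable sheaves, and the vanishing $\operatorname{Hom}(F_1,F_2)=0$ for semistable $F_1,F_2$ with $\mu(F_1)>\mu(F_2)$ (Lemma~\ref{HomVan}) --- all hold for the nef slope with identical proofs, as none of them invokes ampleness. So, given two Harder--Narasimhan filtrations $0=A_0\subset\cdots\subset A_p=E$ and $0=B_0\subset\cdots\subset B_q=E$ for $(D_1,\ldots,D_{n-1})$, I would let $k$ be minimal with $A_1\subseteq B_k$; then the composite $A_1\hookrightarrow B_k\twoheadrightarrow B_k/B_{k-1}$ is nonzero, so Lemma~\ref{HomVan} forces $\mu(A_1)\le\mu(B_k/B_{k-1})\le\mu(B_1)$, the last step using the strict decrease of the $B$-slopes. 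By symmetry $\mu(B_1)\le\mu(A_1)$, so equality holds throughout; then $\mu(B_k/B_{k-1})=\mu(B_1)$ and strict decrease force $k=1$, i.e.\ $A_1\subseteq B_1$, and symmetrically $B_1\subseteq A_1$, whence $A_1=B_1$. This subsheaf is saturated (the quotient $E/A_1$, being an iterated extension of the torsion-free HN factors, is torsion-free), so $\{A_i/A_1\}$ and $\{B_i/B_1\}$ are Harder--Narasimhan filtrations of the torsion-free sheaf $E/A_1$ of strictly smaller length, and an induction on length (base case: $E$ semistable, trivial) finishes the proof.

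\textbf{Where the difficulty lies.} Essentially all the genuine content sits inside Theorem~\ref{approx} and its footnote --- in particular the fact, automatic (Grothendieck's lemma) in the ample case but requiring the stated rank induction in the nef case, that the set of slopes of subsheaves of a torsion-free sheaf is bounded above. Granting that, the corollary is formal; the only thing needing a little care is to remain inside the category of torsion-free sheaves throughout the collapsing and the uniqueness induction, i.e.\ that the relevant subsheaves can be taken saturated and the successive quotients stay torsion-free, handled exactly as in the ample case.
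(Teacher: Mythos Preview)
Your existence argument is essentially identical to the paper's: both invoke Theorem~\ref{approx} and Lemma~\ref{semi approx} to obtain a weak Harder--Narasimhan filtration for $(D_1,\ldots,D_{n-1})$, then collapse adjacent steps of equal slope to get strict inequalities.

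Your uniqueness argument, however, takes a different route. The paper reduces uniqueness of the filtration to uniqueness of the maximal destabilizing subsheaf, and asserts the latter ``can be checked via the continuity of $\mu_t$'' --- i.e.\ it ties uniqueness back into the approximation machinery of Theorem~\ref{approx}. You instead run the classical formal argument (Lemma~\ref{HomVan} plus induction on length) directly for the nef slope, observing that none of the ingredients uses ampleness. Your approach is more self-contained and makes explicit that uniqueness is a purely formal consequence of the slope axioms; the paper's approach is terser but leans again on the ample perturbation, leaving the reader to unpack how continuity of $\mu_t$ pins down the maximal destabilizing subsheaf. Both are correct.
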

\begin{proof}
	Let $H$ be a fixed ample divisor over $X$.
	By Theorem \ref{approx}, there exists a unique filtration of saturated subsheaves $E_i$ of $E$, such that when $t>0$ is very small, $E_i$ is the Harder-Narasimhan filtration of $E$ with respect to $(H_1(t),\ldots, H_{n-1}(t))$.
	Each $E_i$ is semistable with respect to $(D_1,\ldots,D_{n-1})$ by Lemma \ref{semi approx}, and the slopes of $E_i/E_{i-1}$ is non-strictly decreasing.
	Thus by picking the subsheaves in this filtration so that the slopes of the factors become strictly decreasing, we get the Harder-Narasimhan filtration of $E$ with respect to $(D_1,\ldots,D_{n-1})$.
	At last, the uniqueness of the Harder-Narasimhan filtration follows from the uniqueness of the maximal destabilizing subsheaf with respect to $(D_1,\ldots, D_{n-1})$, where the latter can be checked via the continuity of $\mu_t$.
	So we are done.
\end{proof}

At last, we give a simple example illustrating the degeneration of the Harder-Narasimhan filtration.
\begin{example}
	Let $X=\mathbb{P}^1\times \mathbb{P}^1$ be the product of two projective lines over a field $k$.
	Let $D$ be the trivial divisor, $H$ be the ample divisor of bidegree $(1,2)$ over $X$, and let $H(t):=D+tH$ be the sum.
	Denote $L_1=(0,1)$ and $L_2=(1,0)$ to be the two line bundles over $X$, and let $E$ be the the direct sum $L_1\oplus L_2$.
	Then for any $t>0$, the divisor $H(t)$ is ample in $X$ as it can be written as a sum of the pullback of ample divisors on two separate factors of $X$.
	Moreover, for each $t>0$, we have $L_1\cdot H(t)=2t >t=L_2\cdot H(t)$, and thus 
	\[
	0\subset L_1 \subset E \tag{$\ast$}
	\]
	is the Harder-Narasimhan filtration of $E$ for the ample polarization $H(t)$, for any $t>0$.
	However, when $t=0$, as $H(0)=D=0$ is the trivial divisor,
	any subsheaf of $E$ has the same slope $0$.
	In particular, the Harder-Narasimhan filtration of $E$ is the trivial filtration, and the filtration $(\ast)$ becomes a filtration of subsheaves whose graded pieces have the same slopes.
	
\end{example}


\subsection{Positive characteristic} \label{positive char}
\newcommand{\Spec}{\mathrm{Spec}\ }
\newcommand{\Frob}{\mathrm{Frob}}
\newcommand{\can}{\mathrm{can}}

The main difference in characteristic $p$ is that we need to work with the notion of \emph{strong semistability}, as the Frobenius pullbacks of semistable sheaves are not necessarily semistable. 

First, we recall a few basic notions from algebraic geometry in positive characteristic. Let $k$ be an algebraically closed field of characteristic $p > 0$. 
Let $X$ be a smooth projective $k$-variety. 
The \emph{absolute Frobenius} morphism $F_X : X \to X$ is the map on $X$ given locally on an open subset $\Spec R \subset X$ by $a \mapsto a^p$. For simplicity of notation, we just denote $F_X$ by $F$. 
Note that $F$ is not a map of $k$-schemes. 

We also fix nef divisors $D_1,\dots,D_{n-1}$ on $X$ and we will compute slope with respect to the polarization $(D_1,\ldots,D_{n-1})$. 

\begin{definition}[Strong semistability] 
A coherent sheaf  $E$ on $X$ is said to be \emph{strongly semistable} if  $(F^{e})^*E$ is a semistable sheaf on $X$ for all $e \geq 0$. 
\end{definition}
\begin{remark}
A coherent sheaf that is semistable but not strongly semistable can be found for example in \cite[Corollary 2]{Br05}.
\end{remark}

In the positive characteristic, instead of just keeping track of $\mu_{\min}$ and $\mu_{\max}$ of a coherent sheaf, we also keep track of a few other invariants related to the Frobenius pullbacks. We note first that Frobenius pullback alters the slope of a sheaf, indeed we have that $\mu((F^e)^*E) = p^e \mu(E)$. For this reason, we define the related quantity
$$ L_{\max}(E) := \lim_{e \to \infty} \frac{\mu_{\max}({(F^e)^*E})}{p^e}.$$

Note that the sequence $\frac{\mu_{\max}({(F^e)^*E})}{p^e}$ is increasing in $e$, thus the limit $L_{\max}(E)$ exists in $\mathbb{R} \cup \{ \infty \}$. We will show later that this limit is indeed finite.  
Similarly, we can define $L_{\min}$. By definition, we have $L_{\max}(E) \geq \mu_{\max}(E)$ and $L_{\min}(E) \leq \mu_{\min}(E)$. 
It immediately follows from the definition that if $E$ is strongly semistable, then $$L_{\max} = \mu_{\max} = \mu(E) = \mu_{\min}(E) = L_{\min}(E).$$

Let us set 
$$ \alpha(E) := \max \{L_{\max}(E) - \mu_{\max}(E), \mu_{\min}(E) - L_{\min}(E)\} .$$

We would like to find an upper estimate for $\alpha(E)$. To do this, we first state a theorem on how to detect instability of the Frobenius pullback of a semistable sheaf. For details, refer to \cite[Section 2]{langer}. The idea is to make use of the canonical connection $F^*E \to F^*E \otimes \Omega_{X}$ on the Frobenius pullback.

\begin{theorem}
\label{nonTrivMapOnHNPieces}
Let $E$ be a semistable sheaf on $X$ such that $F^*E$ is not a semistable sheaf on $X$. Let $0 = E_0 \subset E_1 \subset \dots \subset E_m = F^*E$ be the Harder-Narasimhan filtration of $F^*E$. Then, the natural $\mathcal{O}_X$ homomorphisms $E_i \to (E/E_i) \otimes \Omega_X$ induced by the canonical connection are non-zero.
\end{theorem}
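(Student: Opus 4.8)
The plan is to argue by contradiction: suppose some induced map $\varphi_i\colon E_i \to (F^*E/E_i)\otimes\Omega_X$ vanishes, and show this forces $F^*E$ to be semistable, contradicting the hypothesis. The key point is that the canonical connection $\nabla\colon F^*E \to F^*E\otimes\Omega_X$ is $F^{-1}\mathcal{O}_X$-linear but only additive over $\mathcal{O}_X$; it satisfies the Leibniz rule, and a standard computation (see \cite[Section 2]{langer}) shows that if a subsheaf $E_i\subset F^*E$ is $\nabla$-flat, i.e.\ $\nabla(E_i)\subseteq E_i\otimes\Omega_X$, then $E_i$ is in fact a pullback: $E_i = F^*(E_i')$ for some subsheaf $E_i'\subseteq E$ (this is Cartier descent for the Frobenius, applied to the subsheaf cut out by the connection). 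First I would record this descent statement, and note that the condition $\varphi_i=0$ is exactly the statement that $\nabla$ maps $E_i$ into $E_i\otimes\Omega_X$, so that $E_i$ is $\nabla$-flat.

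Next I would extract the numerical contradiction. If $E_i = F^*(E_i')$ with $E_i'\subseteq E$, then since $E$ is semistable we have $\mu(E_i')\leq \mu(E)$, and applying $\mu((F^*(-)) = p\,\mu(-)$ gives $\mu(E_i) = p\,\mu(E_i') \leq p\,\mu(E) = \mu(F^*E)$. But $E_i$ is a nonzero term of the Harder-Narasimhan filtration of $F^*E$ with $i<m$ chosen so that $\varphi_i=0$; in particular $\mu(E_i)\geq \mu_{\min}(E_i) \geq \mu_{i+1} = \mu(E_{i+1}/E_i)$, and more to the point $\mu(E_i) = \mu_{\max}(F^*E)\cdot(\text{something})$—more cleanly, $E_1$ (the maximal destabilizing subsheaf) has $\mu(E_1) = \mu_{\max}(F^*E) > \mu(F^*E)$ precisely because $F^*E$ is \emph{not} semistable. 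So it suffices to run the argument for the smallest index, or to observe directly that $\mu(E_i) > \mu(F^*E)$ for every $i$ with $0<i<m$ (since the HN slopes $\mu_1>\dots>\mu_m$ average to $\mu(F^*E)$, every proper initial segment has slope strictly above the average). This contradicts $\mu(E_i)\leq \mu(F^*E)$ obtained above, and we conclude that no $\varphi_i$ can vanish.

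The main obstacle I anticipate is making the Cartier-descent step fully rigorous at the level of coherent (not locally free) sheaves: $F^*E$ and its HN pieces $E_i$ need not be locally free, so one cannot naively invoke the classical statement that "$\nabla$-flat subbundles of $F^*(\mathrm{bundle})$ descend." The clean fix is to work with the saturations and restrict to the open locus where everything is locally free (which has complement of codimension $\geq 2$, hence does not affect degrees or slopes for torsion-free sheaves on a smooth variety), apply Cartier descent there, and then take saturations to extend the descended subsheaf across all of $X$. One also needs to check that the "induced map" in the statement is literally $\varphi_i = (\text{quotient})\circ\nabla|_{E_i}$ and that its vanishing is equivalent to $\nabla$-stability of $E_i$; this is immediate from the definition of the connecting/quotient map but should be spelled out. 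A secondary point worth a sentence is that $\Omega_X$ being locally free of finite rank is what makes $\mu((F^*(-))\otimes\Omega_X$-type manipulations legitimate, but in fact we never tensor slopes with $\Omega_X$—we only use that a nonzero sheaf map between torsion-free sheaves with $\mu_{\min}(\text{source}) > \mu_{\max}(\text{target})$ is impossible, which is Lemma \ref{HomVan} applied to graded pieces; I would use exactly that packaging to avoid any characteristic-$p$ subtlety in the final contradiction.
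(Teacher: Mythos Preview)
The paper does not give its own proof of this theorem; it merely states the result and refers the reader to \cite[Section 2]{langer} for details. Your argument via Cartier descent---observing that $\varphi_i=0$ means $E_i$ is preserved by the canonical (zero $p$-curvature) connection, hence $E_i=F^*(E_i')$ for some $E_i'\subseteq E$, and then deriving the slope contradiction $\mu(E_i)=p\,\mu(E_i')\le p\,\mu(E)=\mu(F^*E)<\mu(E_i)$ from semistability of $E$ and the fact that every proper initial HN piece has slope strictly above the average---is exactly the standard proof and coincides with Langer's approach in the cited reference. Your handling of the torsion-free (non--locally-free) case by restricting to the codimension-$\ge 2$ locally free locus is also the usual maneuver, so nothing is missing.
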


We now estimate the the minimum and maximum slopes of the Frobenius pullback of a semistable sheaf. The idea is to use the non-zero maps above along with Theorem \ref{nonTrivMapOnHNPieces} to get an estimate of how far apart the slopes can be. 

\begin{lemma}
Let $A$ be a nef divisor such that $T_X(A)$ is globally generated (or equivalently if $\Omega \hookrightarrow \mathcal{O}_X(A)^{\oplus l}$ for some $l$) and let $E$ be a torsion-free semistable sheaf on $X$. Then, 
$$\mu_{\max}(F^*E) - \mu_{\min}(F^*E) \leq (\mathrm{rk}(E)-1)A\cdot D_1 \cdot \dots \cdot D_{n-1}.$$
\end{lemma}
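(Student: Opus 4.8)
The plan is to exploit the canonical connection $\nabla\colon F^*E\to F^*E\otimes\Omega_X$ together with Theorem~\ref{nonTrivMapOnHNPieces}. Let $0=E_0\subset E_1\subset\dots\subset E_m=F^*E$ be the Harder--Narasimhan filtration of $F^*E$, with semistable quotients $F_i=E_i/E_{i-1}$ of slopes $\mu_1>\mu_2>\dots>\mu_m$, so that $\mu_{\max}(F^*E)=\mu_1$ and $\mu_{\min}(F^*E)=\mu_m$. By Theorem~\ref{nonTrivMapOnHNPieces}, for each $i=1,\dots,m-1$ the induced map $E_i\to(F^*E/E_i)\otimes\Omega_X$ is nonzero. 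First I would argue that this forces a nonzero map between \emph{adjacent} HN-quotients: composing with the quotient $F^*E/E_i\twoheadrightarrow F_{i+1}$ need not stay nonzero, so instead I would observe that a nonzero map $E_i\to (F^*E/E_i)\otimes\Omega_X$ restricts to $F_j$ for the smallest $j\le i$ with $F_j\to (F^*E/E_i)\otimes\Omega_X$ nonzero, and then projects to some quotient piece; a cleaner route is to note that since $\Omega_X\hookrightarrow\mathcal O_X(A)^{\oplus l}$, the map $E_i\to (F^*E/E_i)\otimes\mathcal O_X(A)^{\oplus l}$ is nonzero, hence for some HN-quotient $F_j$ of $F^*E/E_i$ (that is, some $j>i$) and some coordinate, the composite $F_a\to F_j(A)$ is nonzero for some $a\le i<j$. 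In particular there is \emph{some} pair $a< b$ (one may even arrange $b>a$ with the map landing in $F_b(A)$) with $\mathrm{Hom}(F_a,F_b(A))\ne 0$.

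The key numerical input is then Lemma~\ref{HomVan}: if $F_a$ and $F_b(A)$ are both semistable and $\mu(F_a)>\mu(F_b(A))=\mu_b + A\cdot D_1\cdots D_{n-1}$, the Hom-group vanishes. Here $F_b(A)=F_b\otimes\mathcal O_X(A)$ is semistable because tensoring by a line bundle preserves semistability and only shifts the slope by $A\cdot D_1\cdots D_{n-1}$. Hence the existence of the nonzero map forces
\[
\mu_a - \mu_b \le A\cdot D_1\cdots D_{n-1}
\]
for that pair $a<b$. To bootstrap this into a bound on $\mu_1-\mu_m$, I would run the argument not just on $F^*E$ itself but iteratively: applying Theorem~\ref{nonTrivMapOnHNPieces} with $E_i=E_1$ gives a nonzero map out of $F_1$, hence $\mu_1-\mu_j\le A\cdot D_1\cdots D_{n-1}$ for some $j\ge 2$; then the HN-filtration "above" $E_1$, reasoning on the quotient sheaf, gives the next step, and so on. Summing the resulting chain of inequalities across the at most $m-1$ "jumps" and using $m\le\mathrm{rk}(F^*E)=\mathrm{rk}(E)$ yields
\[
\mu_{\max}(F^*E)-\mu_{\min}(F^*E)\le (m-1)\,A\cdot D_1\cdots D_{n-1}\le (\mathrm{rk}(E)-1)\,A\cdot D_1\cdots D_{n-1}.
\]

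I expect the main obstacle to be the combinatorial/bookkeeping step: turning the statement "$E_i\to(F^*E/E_i)\otimes\Omega_X$ is nonzero for every $i$" into a controlled chain of slope inequalities linking $\mu_1$ down to $\mu_m$ with at most $m-1$ total increments of size $A\cdot D_1\cdots D_{n-1}$. One has to be careful that a nonzero map out of $E_i$ may skip several graded pieces, and to make sure the pieces $F_a$ and the twisted target $F_b(A)$ to which one ultimately restricts are genuinely semistable so that Lemma~\ref{HomVan} applies — this uses that HN-quotients are semistable by construction and that $\Omega_X\hookrightarrow\mathcal O_X(A)^{\oplus l}$ lets us replace $\otimes\Omega_X$ by $\otimes\mathcal O_X(A)$ without losing non-vanishing. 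Once that is set up, the slope arithmetic is routine: each jump costs at most $A\cdot D_1\cdots D_{n-1}$, there are at most $\mathrm{rk}(E)-1$ of them, and the bound follows. A secondary point worth checking is the degenerate case where $F^*E$ already has $m=1$ (semistable), in which the left side is $0$ and there is nothing to prove, so the lemma is only interesting when $F^*E$ is unstable and Theorem~\ref{nonTrivMapOnHNPieces} genuinely applies.
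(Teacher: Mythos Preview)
Your ingredients are exactly right, and the approach is the same as the paper's, but the ``combinatorial/bookkeeping step'' you flag as the main obstacle is in fact a non-issue, and the paper dispatches it in one line. The point is that you do not need to chase which particular graded pieces $F_a$, $F_b$ the nonzero map connects. A nonzero morphism $E_i \to (F^*E/E_i)\otimes\Omega_X$ immediately yields
\[
\mu_{\min}(E_i) \;\le\; \mu_{\max}\bigl((F^*E/E_i)\otimes\Omega_X\bigr),
\]
and since the Harder--Narasimhan filtration of $E_i$ (respectively of $F^*E/E_i$) is just the truncation $E_0\subset\cdots\subset E_i$ (respectively the induced filtration with quotients $F_{i+1},\dots,F_m$), one has $\mu_{\min}(E_i)=\mu_i$ and $\mu_{\max}(F^*E/E_i)=\mu_{i+1}$. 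Combined with the embedding $\Omega_X\hookrightarrow\mathcal O_X(A)^{\oplus l}$ this gives $\mu_i-\mu_{i+1}\le A\cdot D_1\cdots D_{n-1}$ for \emph{every} $i=1,\dots,m-1$, and summing finishes the proof.

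So there is no need for a bootstrap argument or for worrying that the map might ``skip'' pieces: the skipping is harmless precisely because any skipped landing in $F_b$ with $b>i+1$ only makes the inequality easier (since $\mu_b\le\mu_{i+1}$), and likewise on the source side. Your proposal would go through if you added this monotonicity observation, but phrasing it via $\mu_{\min}$ and $\mu_{\max}$ as above avoids the detour entirely.
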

\begin{proof}
Let $0 = E_0 \subset E_1 \subset \dots \subset E_m = F^*E$ be the Harder-Narasimhan filtration of $F^*E$. Using Theorem \ref{nonTrivMapOnHNPieces}, we have that the $\mathcal{O}_X$-homomorphism $E_i \to F^*E/E_i \otimes \Omega_X$ is non-zero. Thus, we get that $$ \mu(E_i/E_{i-1}) =\mu_{\min}(E_i) \leq \mu_{\max}(F^*E/E_i \otimes \Omega_X). $$

Since $\Omega \hookrightarrow \mathcal{O}_X(A)^{\oplus l}$, we get that $\mu_{\max}(F^*E/E_i \otimes \Omega_X) \leq \mu_{\max}(F^*E/E_i \otimes \mathcal{O}_X(A)) = \mu(E_{i+1}/E_{i}) + A\cdot D_1 \cdot \dots \cdot D_{n-1}$.  

Thus, we get that 
$$ \mu(E_i/E_{i-1}) - \mu(E_{i+1}/E_{i}) \leq  A\cdot D_1 \cdot \dots \cdot D_{n-1}$$

Summing this inequality, we get the result. 
\end{proof}

\begin{proposition}
If $A$ is a nef divisor such that $T_X(A)$ is globally generated and $E$ is a torsion-free sheaf on $X$, then 
$$ \frac{\mu_{\max}(F^*E)}{p} \leq \mu_{\max}(E) +  \frac{(\mathrm{rk}(E)-1)}{p}A \cdot D_1\dots D_{n-1}$$
\end{proposition}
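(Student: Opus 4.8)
The strategy is to reduce to the case where $E$ is semistable by passing to the Harder–Narasimhan filtration of $E$, apply the previous lemma to each graded piece, and then use the slope-bookkeeping lemmas from the stability subsection. First I would let $0 = E_0 \subset E_1 \subset \cdots \subset E_m = E$ be the Harder–Narasimhan filtration of $E$, with semistable quotients $G_i = E_i/E_{i-1}$ having slopes $\mu_i = \mu(G_i)$ and $\mu_1 = \mu_{\max}(E)$. Since Frobenius pullback is exact and preserves injections of sheaves (being a flat base change in the relevant sense, or simply because $F$ is affine and faithfully flat on the smooth variety $X$), applying $F^*$ to this filtration yields a filtration of $F^*E$ with graded pieces $F^*G_i$.

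Next I would bound $\mu_{\max}(F^*E)$ using Lemma \ref{SESMuMaxBound} inductively along the filtration: we get
\[
\mu_{\max}(F^*E) \leq \max_{1 \leq i \leq m} \mu_{\max}(F^*G_i).
\]
For each $i$, the sheaf $G_i$ is torsion-free and semistable, so the previous lemma applies and gives $\mu_{\max}(F^*G_i) - \mu_{\min}(F^*G_i) \leq (\mathrm{rk}(G_i)-1)\, A \cdot D_1 \cdots D_{n-1}$. Combining this with the identity $\mu(F^*G_i) = p\,\mu(G_i)$ — which holds since $c_1(F^*G_i) = p\, c_1(G_i)$ — and the elementary fact that $\mu_{\min}(F^*G_i) \leq \mu(F^*G_i) \leq \mu_{\max}(F^*G_i)$, I would deduce
\[
\mu_{\max}(F^*G_i) \leq \mu(F^*G_i) + (\mathrm{rk}(G_i)-1)\, A \cdot D_1 \cdots D_{n-1} = p\,\mu_i + (\mathrm{rk}(G_i)-1)\, A \cdot D_1 \cdots D_{n-1}.
\]

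Finally I would assemble the pieces: since $\mu_i \leq \mu_1 = \mu_{\max}(E)$ for all $i$, and $\mathrm{rk}(G_i) - 1 \leq \mathrm{rk}(E) - 1$ (as each $G_i$ is a nonzero subquotient of $E$, so $1 \leq \mathrm{rk}(G_i) \leq \mathrm{rk}(E)$, with the bound on $\mathrm{rk}(G_i)-1$ being crude but sufficient), we obtain
\[
\mu_{\max}(F^*E) \leq p\,\mu_{\max}(E) + (\mathrm{rk}(E)-1)\, A \cdot D_1 \cdots D_{n-1},
\]
and dividing by $p$ gives the claim. The main obstacle is a minor technical one: checking that $F^*$ indeed preserves the injectivity of the inclusions $E_{i-1} \hookrightarrow E_i$ so that the images form a genuine filtration of $F^*E$ with the expected graded pieces (this is where flatness of Frobenius on a smooth variety is used), and being slightly careful that the bound $\mathrm{rk}(G_i) - 1 \leq \mathrm{rk}(E) - 1$ is the one we want rather than something summed over $i$ — here the use of Lemma \ref{SESMuMaxBound} as a maximum (not a sum) is exactly what keeps the rank factor from accumulating.
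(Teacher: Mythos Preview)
Your proposal is correct and follows essentially the same approach as the paper: pass to the Harder--Narasimhan filtration of $E$, apply the preceding lemma to each semistable graded piece, and then use Lemma~\ref{SESMuMaxBound} on the pulled-back filtration to bound $\mu_{\max}(F^*E)$ by the maximum of the $\mu_{\max}(F^*G_i)$. Your write-up is in fact more detailed than the paper's (you spell out the intermediate step $\mu_{\max}(F^*G_i)\leq \mu(F^*G_i)+(\mathrm{rk}(G_i)-1)A\cdot D_1\cdots D_{n-1}$ and the flatness justification for exactness of $F^*$), but the argument is the same.
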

\begin{proof}
Let $0 = E_0 \subset E_1 \subset \dots \subset E_m = E$ be the Harder-Narasimhan filtration of $E$. 
Applying the previous proposition to $F^*(E_i/E_{i-1})$, we get that 
$$ \frac{\mu_{\max}(F^*(E_i/E_{i-1}))}{p} \leq \mu(E_i/E_{i-1}) +\frac{(\mathrm{rk}(E)-1)}{p}A \cdot D_1\dots D_{n-1} .$$
Note that $F^*E_i$ form a filtration of $F^*E$ and thus by Lemma \ref{SESMuMaxBound}  $\mu_{\max}(F^*E) \leq \max_i \{ \mu_{\max}(F^*(E_i/E_{i-1})) \}$. Using this, we get the required result.
\end{proof}

We have the following bound on $\alpha(E)$ (which in particular shows that $L_{\max}(E)$ and $L_{\min}(E)$ are finite).

\begin{proposition}\label{bound of alpha E}
If $A$ is a nef divisor such that $T_X(A)$ is globally generated, then 
$$ \alpha(E) \leq \frac{(\mathrm{rk}(E)-1)}{p-1}A \cdot D_1\dots D_{n-1}.$$
\end{proposition}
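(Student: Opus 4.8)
The plan is to bound $\alpha(E)$ by amplifying the previous proposition across higher Frobenius powers and then taking a limit. Recall $\alpha(E) = \max\{L_{\max}(E) - \mu_{\max}(E),\ \mu_{\min}(E) - L_{\min}(E)\}$, so it suffices to bound each of the two quantities separately; I will treat $L_{\max}(E) - \mu_{\max}(E)$, and the bound on $\mu_{\min}(E) - L_{\min}(E)$ follows by the same argument applied to the dual (or symmetrically, working with $\mu_{\min}$ and $\mu_{\max}$ interchanged in the preceding lemma and proposition).

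First I would iterate the previous proposition. Applying it with $E$ replaced by $(F^e)^*E$ and using that $\mu_{\max}((F^{e+1})^*E) = \mu_{\max}(F^*((F^e)^*E))$ together with $\mathrm{rk}((F^e)^*E) = \mathrm{rk}(E)$, one gets
\[
\frac{\mu_{\max}((F^{e+1})^*E)}{p^{e+1}} \leq \frac{\mu_{\max}((F^e)^*E)}{p^e} + \frac{(\mathrm{rk}(E)-1)}{p^{e+1}} A\cdot D_1\cdots D_{n-1}.
\]
Summing this telescoping inequality from $e = 0$ to $e = N-1$ yields
\[
\frac{\mu_{\max}((F^N)^*E)}{p^N} \leq \mu_{\max}(E) + (\mathrm{rk}(E)-1)\,A\cdot D_1\cdots D_{n-1}\sum_{e=1}^{N}\frac{1}{p^e}.
\]
Letting $N \to \infty$, the left side converges to $L_{\max}(E)$ by definition, and the geometric series converges to $\frac{1}{p-1}$, giving $L_{\max}(E) - \mu_{\max}(E) \leq \frac{(\mathrm{rk}(E)-1)}{p-1}A\cdot D_1\cdots D_{n-1}$.

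For the $L_{\min}$ side, I would note that $\mu_{\min}$ and $L_{\min}$ satisfy the mirror inequalities: the lemma on $\mu_{\max}(F^*E) - \mu_{\min}(F^*E)$ already controls both ends simultaneously, and one derives $\mu_{\min}(E) - \frac{\mu_{\min}(F^*E)}{p} \leq \frac{(\mathrm{rk}(E)-1)}{p}A\cdot D_1\cdots D_{n-1}$ by the analogous argument (using that $\frac{\mu_{\min}((F^e)^*E)}{p^e}$ is decreasing in $e$). Telescoping and passing to the limit gives the same bound for $\mu_{\min}(E) - L_{\min}(E)$, and taking the maximum of the two bounds finishes the proof. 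Since both are finite real numbers, this also establishes that $L_{\max}(E)$ and $L_{\min}(E)$ are finite. The only mildly delicate point is bookkeeping the ranks and slope-scaling under iterated Frobenius pullback correctly, but there is no real obstacle here — it is a clean geometric-series estimate.
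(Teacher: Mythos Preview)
Your proposal is correct and follows essentially the same route as the paper: iterate the preceding proposition across Frobenius powers, telescope, and pass to the limit using the geometric series $\sum_{e\geq 1} p^{-e} = \tfrac{1}{p-1}$. The paper only writes out the $L_{\max}$ side explicitly, just as you do; your parenthetical remark that the $L_{\min}$ bound follows from the symmetric version of the preceding lemma and proposition is the right justification (the dual-sheaf variant you also mention would require extra care for non--locally-free $E$, so the direct symmetric argument is preferable).
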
 
\begin{proof}
Applying induction to the previous proposition, we see that 
$$\frac{\mu_{\max}((F_X^e)^*E)}{p^e} \leq \mu_{\max}(E) +  (\mathrm{rk}(E)-1)\left( \frac{1}{p} + \frac{1}{p^2} + \dots + \frac{1}{p^e}\right) A \cdot D_1\dots D_{n-1}.$$
Letting $e \to \infty$, we get the required result. 
\end{proof}

\subsection{Finite determinancy of the Harder-Narasimhan filtration.}
We will now prove the following theorem. 
\begin{theorem}\label{fdHN}
For every torsion-free sheaf $E$, there exists a non-negative integer $e_0$ such that all the factors in the Harder-Narasimhan filtration of $(F^{e_0})^* E$ are strongly semistable. 
\end{theorem}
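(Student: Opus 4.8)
The plan is to analyze how the Harder-Narasimhan filtration evolves under iterated Frobenius pullbacks and show that it eventually stops "splitting". First I would record the key numerical control already available: by Proposition \ref{bound of alpha E}, the quantity $\alpha((F^e)^*E)$ is bounded above independently of $e$, since $\mathrm{rk}((F^e)^*E) = \mathrm{rk}(E)$ stays fixed. This gives a uniform bound on how far $\mu_{\max}$ and $\mu_{\min}$ of the Frobenius pullbacks can be from the limiting quantities $L_{\max}$ and $L_{\min}$.

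Next I would argue by induction on the rank $r = \mathrm{rk}(E)$. The rank-one case is trivial since line bundles are strongly semistable. For the inductive step, if $E$ is already strongly semistable there is nothing to prove (take $e_0 = 0$), so assume some Frobenius pullback $(F^{e_1})^*E$ is not semistable. Replacing $E$ by $(F^{e_1})^*E$, I may assume $E$ itself is not semistable, and consider its Harder-Narasimhan filtration $0 = E_0 \subset E_1 \subset \dots \subset E_m = E$ with $m \geq 2$. Each HN factor $F_i = E_i/E_{i-1}$ has rank strictly less than $r$, so by the inductive hypothesis there is $e_0'$ such that all HN factors of $(F^{e_0'})^*F_i$ are strongly semistable for every $i$. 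The main point is then to show that the HN filtration of $(F^{e_0'})^*E$ is precisely the filtration obtained by concatenating the HN filtrations of the $(F^{e_0'})^*F_i$ (which would make all its factors strongly semistable). This requires checking that the slopes strictly decrease across the concatenation, i.e. that $\mu_{\min}((F^{e})^*F_i) > \mu_{\max}((F^{e})^*F_{i+1})$ for $e$ large. Here is where the boundedness of $\alpha$ enters: we have $\mu_{\min}((F^e)^*F_i) \geq p^e L_{\min}(F_i) - (\text{bounded})$ and $\mu_{\max}((F^e)^*F_{i+1}) \leq p^e L_{\max}(F_{i+1}) + (\text{bounded})$, and the original HN inequality $\mu(F_i) > \mu(F_{i+1})$, suitably combined with $L_{\min}(F_i) \geq \mu(F_i)$... wait, that inequality goes the wrong way, so more care is needed: one should instead compare $L_{\min}(F_i)$ and $L_{\max}(F_{i+1})$ directly, using that after the inductive replacement the $(F^{e_0'})^*F_i$ are direct sums (as HN graded pieces) of strongly semistable sheaves, so $L_{\min} = \mu_{\min}$ and $L_{\max} = \mu_{\max}$ stabilize, and the gaps $p^e$ times a fixed positive difference eventually dominate the bounded error terms.

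The main obstacle I anticipate is precisely this slope-comparison step: a priori there is no reason the top slope of $(F^e)^*F_{i+1}$ stays below the bottom slope of $(F^e)^*F_i$, because Frobenius pullback can increase $\mu_{\max}$ and decrease $\mu_{\min}$ of each piece by amounts that, while bounded per step, one must ensure do not swamp the $p^e$-scaled separation between consecutive factors. The resolution is that $L_{\min}(F_i) - L_{\max}(F_{i+1})$ is a fixed quantity; once the inductive step has replaced each $F_i$ by a pullback whose HN factors are strongly semistable, $\mu_{\min}$ and $\mu_{\max}$ of further pullbacks of $F_i$ grow exactly like $p^e L_{\min}(F_i)$ and $p^e L_{\max}(F_{i+1})$ with no error, so I only need $L_{\min}(F_i) \geq L_{\max}(F_{i+1})$, and in the borderline equality case a separate short argument (using that the concatenated filtration is still at least a refinement-free weak HN filtration, or directly that equality of these $L$-invariants forces the relevant extension to already be semistable) closes it. Finally I would assemble: take $e_0 = e_1 + e_0'$, so that all HN factors of $(F^{e_0})^*E$ are strongly semistable, and invoke Lemma \ref{SESMuMaxBound} and Lemma \ref{ExactAdd} as needed to verify the concatenated filtration genuinely is the Harder-Narasimhan filtration.
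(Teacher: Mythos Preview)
Your inductive scheme has a genuine gap at the slope-comparison step you yourself flagged. You ultimately need $L_{\min}(F_i)\geq L_{\max}(F_{i+1})$, but nothing you have set up delivers this: from the Harder--Narasimhan inequality $\mu(F_i)>\mu(F_{i+1})$ one only knows $L_{\min}(F_i)\leq\mu(F_i)$ and $L_{\max}(F_{i+1})\geq\mu(F_{i+1})$, and both of these point the wrong way. It is entirely consistent with the hypotheses that $L_{\min}(F_i)<L_{\max}(F_{i+1})$, in which case for all large $e$ the concatenated filtration is \emph{not} the Harder--Narasimhan filtration of $(F^{e})^{*}E$, and the induction produces nothing. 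The underlying difficulty is that the HN filtration of $E$ need not be the ``right'' filtration to pull back: the maximal destabilizing subsheaf of $(F^{e})^{*}E$ can be unrelated to $(F^{e})^{*}E_1$. Your closing remark about the equality case does not address this, since the problematic case is the strict inequality $L_{\min}(F_i)<L_{\max}(F_{i+1})$.

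The paper avoids this by first constructing the limiting Harder--Narasimhan polygon $\mathrm{HNP}_{\infty}(E)=\overline{\bigcup_e \mathrm{HNP}_e(E)}$ and proving it is a bounded convex polygon with finitely many vertices. The first nontrivial vertex of $\mathrm{HNP}_{\infty}(E)$ pins down the rank and the limiting slope that the eventual maximal destabilizing subsheaf must have; an area comparison between $\mathrm{HNP}(E)$, $\mathrm{HNP}_l(E)$, and $\mathrm{HNP}_{\infty}(E)$ then forces the first edge of $\mathrm{HNP}_{\infty}(E_{i_1,0})$ to lie on the first edge of $\mathrm{HNP}_{\infty}(E)$, which (together with the inductive hypothesis applied to $E_{i_1,0}$ and to $E/G$) yields a strongly semistable subsheaf $G$ of the correct slope. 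In short, the paper identifies the eventual HN filtration \emph{before} invoking induction on rank, which is precisely the missing ingredient in your approach.
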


The proof uses the Harder-Narasimhan polygon (see Definition \ref{DefinitonHNP}). For any sheaf $G$ over a smooth projective variety $X$, let us define $p(G) := (\mathrm{rk} G, \deg G) \in \mathbb{R}^2$. 
 We also define 
$$ \mathrm{HNP}_e(E) := \{ (x,y) \in \mathbb{R}^2 \mid (x,p^e y) \in \mathrm{HNP}((F^{e})^*E) \}.$$

Note that $\mathrm{HNP}_e(E)$ forms an increasing sequence of convex subsets of $\mathbb{R}^2$ and we define $\mathrm{HNP}_\infty(E) := \overline{\bigcup_e \mathrm{HNP}_e(E)}$.

\begin{proposition}
$HNP_{\infty}(E)$ is a bounded convex polygon. 
\end{proposition}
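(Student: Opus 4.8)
The plan is to show that $\mathrm{HNP}_\infty(E)$ is the closure of an increasing union of convex polygons, all of which are contained in a fixed bounded region and all of which have a uniformly bounded number of ``essentially distinct'' slopes, so that the union stabilizes to a genuine polygon rather than a region with infinitely many vertices. First I would recall the basic structural facts: each $\mathrm{HNP}_e(E)$ is, by Definition \ref{DefinitonHNP}, the convex hull of the points $\tfrac{1}{p^e}p((F^e)^*E_i^{(e)})$ where $0 = E_0^{(e)} \subset \cdots \subset E_{m_e}^{(e)} = (F^e)^*E$ is the Harder--Narasimhan filtration of $(F^e)^*E$; in particular each is a convex polygon whose left endpoint is the origin $(0,0)$ and whose right endpoint is $(\mathrm{rk}(E), \deg(E))$ (the rank is unchanged and $\deg$ scales by $p^e$, which the rescaling in the definition of $\mathrm{HNP}_e$ undoes). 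The upper boundary of $\mathrm{HNP}_e(E)$ over $x=1$ has slopes between $\mu_{\min}((F^e)^*E)/p^e$ and $\mu_{\max}((F^e)^*E)/p^e$; by Proposition \ref{bound of alpha E} (and the remark that $L_{\max}, L_{\min}$ are finite), these are bounded above by $L_{\max}(E) \le \mu_{\max}(E) + \frac{\mathrm{rk}(E)-1}{p-1}A\cdot D_1\cdots D_{n-1}$ and below by $L_{\min}(E) \ge \mu_{\min}(E) - \frac{\mathrm{rk}(E)-1}{p-1}A\cdot D_1\cdots D_{n-1}$, uniformly in $e$. Since all vertices have first coordinate an integer in $[0,\mathrm{rk}(E)]$ and the slopes of all edges are bounded in absolute value, every $\mathrm{HNP}_e(E)$ lies in one fixed bounded convex region $R \subset \mathbb{R}^2$; hence so does $\mathrm{HNP}_\infty(E)$, and it is convex (an increasing union of convex sets is convex, and closure preserves convexity). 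This already gives boundedness and convexity; the remaining point is that the closure of the union is a \emph{polygon}, i.e.\ has finitely many vertices.

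For that, the key observation is a quantization of the possible vertices. A vertex of $\mathrm{HNP}_e(E)$ is a point $\big(\mathrm{rk}(E_i^{(e)}), \tfrac{1}{p^e}\deg((F^e)^*E_i^{(e)})\big)$; its first coordinate is an integer $r$ with $0 \le r \le \mathrm{rk}(E)$, and its second coordinate lies in $\tfrac{1}{p^e}\cdot\tfrac{1}{(n-1)!\,\cdots}\mathbb{Z}$ — more precisely $\deg$ of a subsheaf is an integer (or lies in a fixed lattice $\tfrac{1}{N}\mathbb{Z}$ depending only on $X$ and the polarization), so the second coordinate lies in $\tfrac{1}{N p^e}\mathbb{Z}$. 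I would fix the integer $x_0 = r$ and analyze the function $e \mapsto y_e(r) := \sup\{\,y : (r,y) \in \mathrm{HNP}_e(E)\,\}$. By the nesting $\mathrm{HNP}_e(E) \subset \mathrm{HNP}_{e+1}(E)$ this is nondecreasing in $e$, and it is bounded above because $\mathrm{HNP}_e(E) \subset R$; hence $y_e(r)$ converges, to $y_\infty(r) := \sup\{y : (r,y)\in \mathrm{HNP}_\infty(E)\}$. The upper boundary of $\mathrm{HNP}_\infty(E)$ is the concave function $r \mapsto y_\infty(r)$ on $[0,\mathrm{rk}(E)]$, and it is the supremum of the piecewise-linear concave functions coming from the $\mathrm{HNP}_e$; being a supremum of concave functions it is concave, and being bounded it is in fact the concave hull of its values at the integer points $r = 0,1,\dots,\mathrm{rk}(E)$ — this is because every $\mathrm{HNP}_e(E)$ has all its vertices at integer $x$-coordinates, so each approximating function is linear between consecutive integers, and a pointwise-increasing limit of such functions is again linear between consecutive integers (here is the only slightly delicate point, spelled out below). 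A concave function on $[0,\mathrm{rk}(E)]$ that is affine on each $[j,j+1]$ is piecewise linear with at most $\mathrm{rk}(E)+1$ vertices; together with the lower boundary (the single segment from $(0,0)$ to $(\mathrm{rk}(E),\deg(E))$, which is common to all $\mathrm{HNP}_e$), this exhibits $\mathrm{HNP}_\infty(E)$ as a bounded convex polygon.

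The step I expect to require the most care is justifying that the limiting upper boundary is affine between consecutive integers — equivalently that no ``new'' vertex can appear at a non-integer $x$-coordinate in the limit. The subtlety is that a priori the breakpoints of $\mathrm{HNP}_e(E)$ on a segment like $[j,j+1]$ could occur, but they cannot: the vertices of $\mathrm{HNP}_e(E)$ have integer first coordinate, so $\mathrm{HNP}_e(E)$ restricted to $[j,j+1]$ is a single line segment, hence $y_e$ is affine on $[j,j+1]$ for every $e$; a pointwise limit of affine functions on an interval is affine. Thus $y_\infty$ is affine on each $[j,j+1]$ and we conclude as above. One should also double-check that $\mathrm{HNP}_\infty(E)$ is closed and that its boundary really is $\{(x, y_\infty(x))\} \cup \{(x, \ell(x))\}$ where $\ell$ is the lower line — this follows since for each $x$ the fiber of an increasing union of closed convex sets, then closed up, is the closed interval $[\ell(x), y_\infty(x)]$, using that $\ell$ is the common lower boundary.
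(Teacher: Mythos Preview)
Your proposal is correct and follows essentially the same approach as the paper: boundedness comes from the finiteness of $\alpha(E)$ (equivalently of $L_{\max},L_{\min}$), and the polygon property from the observation that all vertices of every $\mathrm{HNP}_e(E)$ have integer first coordinate in $\{0,\dots,\mathrm{rk}(E)\}$. The only cosmetic difference is that you phrase the last step analytically (the pointwise limit of functions affine on each $[j,j+1]$ is again affine there), whereas the paper phrases it geometrically by showing directly that $\mathrm{HNP}_\infty(E)$ coincides with the convex hull of the finitely many points $(r,q_{\infty,r})$, $r=0,\dots,\mathrm{rk}(E)$.
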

\begin{proof}
It is clear that $\mathrm{HNP}_{\infty}(E)$ is convex as each of the $\mathrm{HNP}_e(E)$ are convex. To see that it is bounded, note that the rank coordinates of $\mathrm{HNP}_e(E)$ lie in in the interval $[0,\mathrm{rk}(E)]$. The fact that $\alpha(E)$ is finite also tells us that there is a uniform bound on the degree coordinates of $\mathrm{HNP}_e(E)$. Thus we see that $\mathrm{HNP}_\infty(E)$ is a bounded subset of $\mathbb{R}^2$. 

To show $\HNP_{\infty}(E)$ is a polygon, we claim that it is a convex hull of the set $\{(r,q_{\infty,r}) \mid  r \in \{0,\dots, \mathrm{rk}(E)\}$, where  $q_{\infty,r} = \sup \{d \mid (r,d) \in \HNP_{\infty}(E) \}$. Note that $q_{\infty,0} = p(0) = (0,0)$ and $q_{\infty,\mathrm{rk}(E)} = p(E) = (\mathrm{rk}(E),\deg(E))$.

It is clear that the convex hull of these points is contained in $\HNP_{\infty}(E)$. To see the converse, first note that all $\HNP_k(E)$ and thus $\HNP_\infty(E)$ lie above the line segment joining $p(E_0) = (0,0)$ and $p(E) = (\mathrm{rk}(E),\deg(E))$. 
It is thus enough to show that every vertex of $\HNP_e(E)$ lies in the convex hull for all $e$. Pick any vertex $(r,d)$ of $\HNP_e(E)$ for some $e$ and some $r \in \{ 0,\dots, \mathrm{rk}(E)\}$. Then, since $(r,d)$ lies above the line segment $\overline{p(0)p(E)}$ and below the point $(r, q_{r})$, one can easily see that $(r,d)$ lies in the convex hull of $p(0),p(E)$ and $(r,q_{r})$. 
\end{proof}

Before proving the finite determinancy of the Harder-Narasimhan filtration, let us introduce some notation. Let $0= E_{0,e} \subset \dots \subset E_{m_e,e}$ denote the Harder-Narasimhan filtration of $(F^e)^*E$. Let $p_{i,e} = (\mathrm{rk}(E_{i,e}), \\ \deg(E_{i,e})/p^e)$ denote the vertices of $\HNP_e(E)$. 

Let $(0,0) = p_{0,\infty},\dots,p_{s,\infty} = p(E)$ denote the vertices of the $\HNP_\infty(E)$ and denote the coordinates of $p_{j,\infty}$ as $p_{j,\infty} = (r_{j,\infty}, d_{j,\infty})$. Let us denote the slope of the line segment $\overline{p_{(j-1), \infty}p_{j, \infty}}$ as $\mu_{j,\infty}$. For every $j$, there exists a sequence $p_{i_j,e} \to p_{j,\infty}$ for some sequence $i_j$ as $e \to \infty$. 

\begin{proof}[Proof of Theorem \ref{fdHN}]
We prove that there exists some $e_0$ such that $HNP_{e_0}(E) = HNP_{\infty}(E)$. We show this by induction on rank. In the rank one case, there is nothing to show. 

Pick $0 < \epsilon \ll 1$. Then we have the Euclidean distance $\| p_{i_j, e} - p_{j, \infty} \| < \epsilon$ for all $j$ and all $e \gg 0$. Replacing $E$ by $(F^{e})^*E$ for some large $e$, we may assume that $\| p_{i_j, e} - p_{j, \infty} \| < \epsilon$ holds for all $e \geq 0$. 
Since the rank coordinates of $p_{i_j,e}$ and $p_{j,\infty}$ can only take integer values, it must be the case that $r_{j,\infty} = \mathrm{rk}(E_{i_j, e})$.

We first show that there exists an integer $e_0$ such that $E_{i_1, e} = (F^{e-e_0})^*E_{i_1, e_0}$ for all $e \geq e_0$. 
Consider the first line segment $s$ of the polygon $\HNP_\infty(E_{i_1,0})$. Note that $s$ cannot lie above the line segment $\overline{p_{0,\infty} p_{1,\infty}}$ (see Remark \ref{RemarkHNPLiesAboveSubsheaves}). We will show that $s$ actually lies on the line segment $\overline{p_{0,\infty} p_{1,\infty}}$. Assuming that is the case, we apply the induction hypothesis to $E_{i_1,0}$ to get:  there exists a subsheaf $G$ of $(F^l)^*{E_{i_1,0}}$ such that $(\mathrm{G},\frac{\deg{G}}{p^l}) \in s \subset \overline{p_{0,\infty} p_{1,\infty}}$ i.e.~$G$ is strongly semistable. Once again, we may replace $E$ by $(F^l)^*E$ to assume that $l = 0$. 
By induction hypothesis, since the theorem holds for $E/G$ and since $\mu(G) = \mu_{\max}(E) = \frac{d_{1,\infty}}{r_{1,\infty}}$, we also get the theorem for $E$. 

Now suppose that $s$ lies strictly below the line segment $\overline{p_{0,\infty} p_{1,\infty}}$. We will deduce a contradiction from this. Since $p_{i_1, l} \to p_{i_1, \infty}$ as $l \to \infty$, we can find an integer $l$ such that $\overline{p_{0,l}p_{i_1, l}}$ lies above $s$. Thus, $\mu_{\max}((F^{l})^*E) > \mu_{\max}((F^l)^*E_{i_1, 0})$ for such an integer $l$. 

Since $\{ (F^l)^*E_{i_1, 0},(F^l)^*E_{i_1 + 1, 0},\dots,(F^l)^*E \}$ form a filtration of $(F^l)^*E$, using Lemma \ref{SESMuMaxBound}, we get that there exists an integer $j > i_1$ such that  
\[\mu_{\max}((F^l)^*(E_{j,0}/E_{j-1,0})) > \mu_{\max}((F^l)^*E_{i_1, 0}).\] Consider a saturated subsheaf $G \subset (F^l)^*(E_{j,0})$ such that $\mu(G/(F^l)^*(E_{j-1,0})) = \mu_{\max}((F^l)^*(E_{j,0}/E_{j-1,0}))$. 
Then, $(\mathrm{rk}(G), \frac{\deg(G)}{p^l}) \in \HNP_{l}(E)$. Let us try to estimate the difference in the areas of $\HNP_l(E)$ and $\HNP(E)$. There is a lower bound on the difference in areas by considering the triangle $W$ joining the points $p_{j-1,0}$, $p_{j,0}$ and  $(\mathrm{rk}(G), \frac{\deg(G)}{p^l})$. Let $\tilde{\mu}_1 := \frac{\mu((F^l)^*(E_{j,0}/E_{j-1,0}))}{p^l}$ and $\tilde{\mu}_2 := \frac{\mu(G/(F^l)^*E_{j-1,0})}{p^l}$ denote the slopes of the sides of $W$ that contain $p_{j-1,0}$. Using the fact that the $\mathrm{rk}$-coordinates of these points can only take integer values, we can get a get a lower bound on the area of the triangle $W$ by considering the triangle $W'$ joining $p_{j-1,0}$, $p_{j-1,0} + (1,\tilde{\mu}_1)$, $p_{j-1,0} + (1,\tilde{\mu}_2)$ (see Figure \ref{fig:triangles}). 
\begin{figure}[h!]
\centering
\begin{tikzpicture}
\node at (0,0) [below]  {$p_{j-1,0}$}; 
\fill (0,0) circle (2pt); 

\node at (6,3) [right]  {$p_{j,0}$}; 
\fill (6,3) circle (2pt); 

\node at (3,5) [above]  {$(\mathrm{rk}(G),\frac{\deg(G)}{p^l})$}; 
\fill (3,5) circle (2pt); 

\node at (2,1) [below=2pt,right=2pt]  {$p_{j-1}+(1,\tilde{\mu}_1)$}; 
\fill (2,1) circle (2pt); 

\node at (2,3.33) [above,left]  {$p_{j-1}+(1,\tilde{\mu}_2)$}; 
\fill (2,3.33) circle (2pt); 

\node at (1.4,1.4) {$W'$};

\draw (0,0) -- (6,3);
\draw (0,0) -- (3,5); 
\draw (3,5) -- (6,3);
\draw (2,1) -- (2,3.33);
\end{tikzpicture}
    \caption{Triangles $W$ and $W'$ appearing in the proof of Theorem \ref{fdHN}. The larger triangle is $W$ and the smaller triangle inside it is $W'$.} 
    \label{fig:triangles} 
\end{figure}
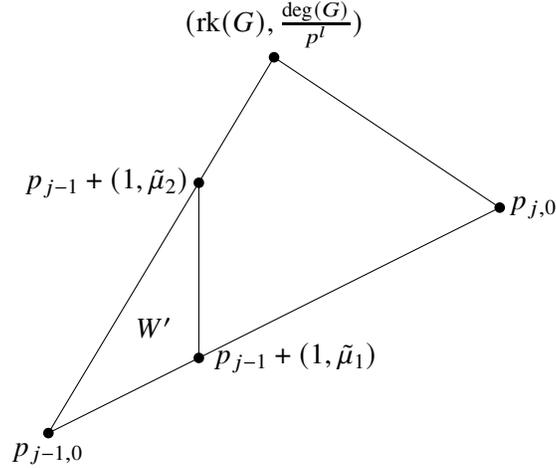

Thus it follows:
\begin{align*}
&\mathrm{Area}(\HNP_l(E)) - \mathrm{Area}(\HNP(E)) \\ &\geq  \mathrm{Area}(W') \\ &\geq \frac{1}{2}(\tilde{\mu}_2 - \tilde{\mu}_1) \\
    &= \frac{1}{2} \left( \frac{\mu(G/(F_X^l)^*(E_{j-1,0}))}{p^{l}} - \frac{\mu((F_X^l)^*(E_{j,0}/E_{j-1,0}))}{p^l}\right) \\
    &\geq \frac{1}{2} \left( \mu({E_{i_1, 0}}) - \mu(E_{j, 0}/E_{j-1,0}) \right) \\
    &\geq \frac{1}{2}((\mu_{1\infty} - \epsilon) - \mu(E_{j, 0}/E_{j-1,0}) ).
\end{align*}

To finish the estimate, we need to get an upper bound on $\mu(E_{j, 0}/E_{j-1,0})$. To do this, let $u$ be such that $i_u < j \leq i_{u+1}$. Then the line segment $\overline{p_{j-1,0}p_{j,0}}$ is sandwiched between the line segments $\overline{p_{i_u, 0}p_{i_{u+1}, 0}}$ and $\overline{p_{i_u,\infty} p_{i_{u+1},\infty}}$ (see Figure \ref{fig:lineSegment}). Using the fact that $\| p_{i_u,\infty} - p_{i_u 0} \| \leq \epsilon$ and estimating the slope of the line segment $\overline{p_{j-1,0}p_{j,0}}$, we get that $| \mu(E_{j, 0}/E_{j-1,0}) - \mu_{u+1,\infty} | \leq 3 \epsilon$. Thus, we have
\begin{figure}[h!]
    \centering
\begin{tikzpicture}
\node at (0,2) [above] {$p_{i_u,\infty}$};
\fill (0,2) circle (2pt); 

\node at (8,5) [above] {$p_{i_{u+1},\infty}$};
\fill (8,5) circle (2pt); 

\draw [red] (0,2) -- (8,5);

\node at (0,0) [below] {$p_{i_u,0}$};
\fill (0,0) circle (2pt); 

\node at (8,2) [below] {$p_{i_{u+1},0}$};
\fill (8,2) circle (2pt); 

\draw [dashed] (0,0) -- (8,2);

\node at (2,1.4) [above] {$p_{j-1,0}$};
\fill (2,1.4) circle (2pt); 

\node at (5,2.3) [above] {$p_{j,0}$};
\fill (5,2.3) circle (2pt); 

\draw [blue] (2,1.4) -- (5,2.3);
\draw [blue] (0,0) -- (0.6,0.6);
\draw [blue] (1.4,1.1) -- (2,1.4);
\draw [blue,dotted] (0.6,0.6) -- (1.4,1.1);

\draw [blue] (5,2.3) -- (6,2.3);
\draw [blue] (7,2.2) -- (8,2);
\draw [blue,dotted] (6,2.3) -- (7,2.2);

\draw [decorate,decoration={brace,amplitude=5pt}, xshift =-4pt ,yshift=0pt]
(0,0) -- (0,2)node [black,midway,xshift=-14pt] {$\leq \epsilon$};

\draw [decorate,decoration={brace,amplitude=5pt}, xshift =+4pt ,yshift=0pt]
(8,5) -- (8,2)node [black,midway,xshift=+14pt] {$\leq \epsilon$};
\end{tikzpicture}
    \caption{Estimating  $\mu(E_{j,0}/E_{j-1,0})$. The red line shows the boundary of $\HNP_\infty(E)$ and the blue line shows the boundary of $\HNP(E)$. }
    \label{fig:lineSegment}
\end{figure}
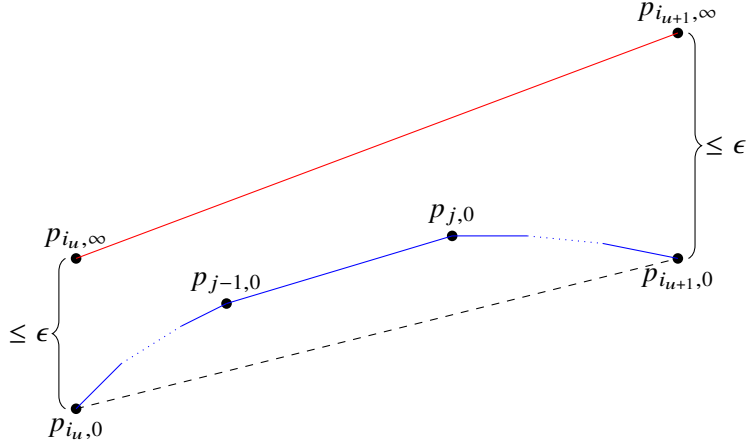

$$\mathrm{Area}(\HNP_l(E)) - \mathrm{Area}(\HNP(E)) \geq \frac{1}{2}(\mu_{1\infty} - \mu_{2\infty}
-4\epsilon).$$
But the difference in areas between $\HNP(E)$ and $\HNP_\infty(E)$ is at most $r\epsilon$, which gives us a contradiction for a small enough choice of $\epsilon$. 
\end{proof}

\section{Bogomolov's inequality}\label{Sec BI}
In this section, we give several equivalent statements on Bogomolov's inequality, about the positivity of $\Delta(E)\cdot D_2\cdots D_{n-1}$.

Fix a positive integer $r$.
Let $X$ be a smooth projective variety of dimension $n\geq 2$ over an algebraically closed field $k$ in any characteristic. Note that in the characteristic zero case, we define strong semistability to be the same as the semistability.
	We fix a nef divisor $A$ over $X$ such that $T_X(A)$ is globally generated.
	Define the constant $\beta_r=\beta_r(A;D_1,\ldots, D_{n-1})$ for a choice of divisors $(D_1,\ldots, D_{n-1})$ as below
	\[
	\beta_r(A;D_1,\ldots, D_{n-1}):= 
	\begin{cases}
		0, & \text{if}~\mathrm{char}(k)=0;\\
		\left(\frac{r(r-1)}{p-1} AD_1\cdots D_{n-1}\right)^2, & \text{if}~\mathrm{char}(k)=p.
	\end{cases}
\]
We also reference the \emph{discriminant} of a rank $r$ torsion-free sheaf $E$, denoted as $\Delta(E)$, is defined as 
\[
\Delta(E):= 2r\cdot  c_2(E)-(r-1)c_1(E)^2.
\]
When $G$ is a non-trivial subsheaf of rank $s$ in $E$, we set $$\xi_{G,E} = \frac{c_1(G)}{s} - \frac{c_1(E)}{r}.$$
We also use $K^+$ to denote an open cone in $\mathrm{Num}(X)$, defined as in the beginning of Subsection \ref{sub ii to iii}.

The main theorem is the following.
\begin{theorem}[BI($r$)]\label{BI}
Let $(D_1,\ldots, D_{n-1})$ be a collection of nef line bundles over $X$, and let $d=D_1^2\cdot D_2\cdots D_{n-1}\geq 0$. Fix a positive integer $r$.
Then the following statements are equivalent. 
\begin{enumerate}
    \item[(i)] Assume each $D_i$ is ample, and $E$ is a  strongly $(D_1,\ldots, D_{n-1})$-semistable torsion-free sheaf of rank $r'\leq r$.
		Then we have
		\[
		\Delta(E)D_2\ldots D_{n-1}\geq 0.
		\]
	\item[(ii)] Let $E$ be a $(D_1,\ldots, D_{n-1})$-semistable torsion-free sheaf of rank $r'$ for some $r'\leq r$.
		Then we have 
		\[
		d \cdot \Delta(E)D_2\cdots D_{n-1} + \beta_{r'} \geq 0.
		\]
	\item[(iii)] Let $E$ be a torsion-free sheaf of rank $r'$ for some $r'\leq r$ over $X$, and assume $d \cdot \Delta(E)D_2\cdots D_{n-1} + \beta_{r'} <0$.
	Then there exists a saturated subsheaf $E'$ of $E$ such that $\xi_{E',E}\in K^+$.
    \item[(iv)] Let $E$ be a strongly $(D_1,\ldots, D_{n-1})$-semistable torsion-free sheaf of rank $r'\leq r$ over $X$.
    Then 
    \[
    \Delta(E)D_2\cdots D_{n-1} \geq 0.
    \]
\end{enumerate}
\end{theorem}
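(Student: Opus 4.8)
The plan is to prove the four equivalences by running a single cycle of implications, in the spirit of Langer's chain $T^5(r)\Rightarrow T^3(r)\Rightarrow T^4(r)\Rightarrow T^2(r)\Rightarrow T^5(r)$; concretely I would establish
\[
(iv)\ \Longrightarrow\ (iii)\ \Longrightarrow\ (i)\ \Longrightarrow\ (ii)\ \Longrightarrow\ (iv),
\]
the implication $(iv)\Rightarrow(i)$ being in any case immediate since ample divisors are nef. Throughout I would reduce to the case $d=D_1^2D_2\cdots D_{n-1}>0$: when $d=0$ the statements $(ii)$ and $(iii)$ are content-free (as $\beta_{r'}\ge 0$), and $(i)$, $(iv)$ follow by perturbing the whole polarization to an ample one and passing to the limit, using the approximation results of Subsection~\ref{subsec approx}. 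Two devices recur in every step: the Frobenius scaling $\Delta((F^e)^*E)\cdot D_2\cdots D_{n-1}=p^{2e}\,\Delta(E)\cdot D_2\cdots D_{n-1}$, so that a strongly semistable sheaf produces a whole sequence of semistable sheaves with discriminant scaled by $p^{2e}$; and the comparison of nef and ample polarizations via Theorem~\ref{approx} and Lemma~\ref{semi approx}, together with the fact that intersection numbers such as $\Delta(E)(D_2+tH)\cdots(D_{n-1}+tH)$ depend continuously (polynomially) on $t$.

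Two of the implications are short. For $(iii)\Rightarrow(ii)$: if a $(D_1,\dots,D_{n-1})$-semistable $E$ of rank $r'\le r$ violated $(ii)$, then $d\,\Delta(E)D_2\cdots D_{n-1}+\beta_{r'}<0$, forcing $d>0$; by $(iii)$ there is a saturated $E'\subset E$ with $\xi_{E',E}\in K^+$, hence $\xi_{E',E}\cdot D_1D_2\cdots D_{n-1}\ge 0$ and $\xi_{E',E}^2\,D_2\cdots D_{n-1}>0$, while semistability forces $\xi_{E',E}\cdot D_1D_2\cdots D_{n-1}=\mu(E')-\mu(E)\le 0$; so this pairing vanishes and $\{D_1,\xi_{E',E}\}$ spans a plane on which the form $(A,B)\mapsto A\cdot B\cdot D_2\cdots D_{n-1}$ is positive definite, contradicting the Hodge index theorem. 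The same Hodge-index observation powers $(iii)\Rightarrow(i)$: given a strongly semistable $E$ of rank $r'\le r$ with the $D_i$ ample (so $d>0$), if $\Delta(E)D_2\cdots D_{n-1}<0$ then for $e\gg 0$ one gets $d\,\Delta((F^e)^*E)D_2\cdots D_{n-1}+\beta_{r'}<0$, so $(iii)$ produces a saturated subsheaf of the \emph{semistable} sheaf $(F^e)^*E$ with $\xi$ in $K^+$, again impossible. And $(ii)\Rightarrow(iv)$ is a limit: apply $(ii)$ to each $(F^e)^*E$ to get $p^{2e}d\,\Delta(E)D_2\cdots D_{n-1}+\beta_{r'}\ge 0$ and let $e\to\infty$ (for $d>0$); incidentally this is also what shows $L_{\max}(E)$ and $L_{\min}(E)$ are finite in general.

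The substance is in $(iv)\Rightarrow(iii)$, which I would prove by induction on $r'$ (the case $r'=1$ is vacuous, since $\Delta$ of a rank-one torsion-free sheaf has effective $c_2$ and so pairs non-negatively with $D_2\cdots D_{n-1}$). Suppose $d\,\Delta(E)D_2\cdots D_{n-1}+\beta_{r'}<0$; after replacing $E$ by a high Frobenius pullback (which only decreases the left-hand side, by the scaling identity) I may assume, by the finite determinacy of the Harder--Narasimhan filtration (Theorem~\ref{fdHN}), that the factors $F_i$ of the HN filtration of $E$ are strongly semistable. If $E$ is itself semistable, then so are all $(F^e)^*E$, hence $E$ is strongly semistable and $(iv)$ gives $\Delta(E)D_2\cdots D_{n-1}\ge 0$, contradicting $d\,\Delta(E)D_2\cdots D_{n-1}<-\beta_{r'}\le 0$. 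Otherwise I feed the HN filtration into the discriminant additivity identity, which for a sub/quotient pair reads $\tfrac{1}{r}\Delta(E)=\tfrac{1}{s}\Delta(E')+\tfrac{1}{t}\Delta(E'')-\tfrac{rs}{t}\,\xi_{E',E}^2$ and telescopes to
\[
\frac{\Delta(E)}{r'}=\sum_i\frac{\Delta(F_i)}{\mathrm{rk}(F_i)}-\sum_k a_k\,\eta_k^2, \qquad a_k>0,
\]
with the $\eta_k$ the successive differences of normalized first Chern classes along the filtration; intersecting with $D_2\cdots D_{n-1}$, using $(ii)$ for the lower-rank semistable sheaves $F_i$ (available from the inductive hypothesis via $(iii)\Rightarrow(ii)$), and the elementary superadditivity $\beta_{r'}/r'\ge\sum_i\beta_{\mathrm{rk}(F_i)}/\mathrm{rk}(F_i)$ for $x\mapsto x(x-1)^2$, one deduces that $\sum_k a_k\,\eta_k^2\,D_2\cdots D_{n-1}>0$; a final application of the Hodge index theorem converts one such class of positive self-intersection (with non-negative pairing against all nef classes, which one arranges from the ordering of the HN slopes) into a saturated subsheaf $E'$ with $\xi_{E',E}\in K^+$. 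The remaining implication $(i)\Rightarrow(ii)$ is the place where one genuinely passes from ample to nef: given a semistable $E$ for a nef polarization, perturb $D_i\rightsquigarrow D_i+tH$, use Theorem~\ref{approx}, Lemma~\ref{semi approx} and, in characteristic $p$, finite determinacy together with the bound on $\alpha(E)$ from Proposition~\ref{bound of alpha E} to control the Frobenius pullbacks, apply $(i)$ to the (strongly semistable, ample-polarized) HN factors of the perturbation, reassemble via the additivity identity, bound the correction terms $\eta_k^2D_2\cdots D_{n-1}$ by Hodge index and the $\alpha$-estimate — this is exactly where the quadratic constant $\beta_{r'}$ is produced and nothing larger — and finally let $t\to 0$.

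The main obstacle is precisely this last pair of arguments, $(iv)\Rightarrow(iii)$ and $(i)\Rightarrow(ii)$. The delicate points are (a) extracting an honest saturated subsheaf with $\xi\in K^+$ from the inequality $\sum_k a_k\eta_k^2 D_2\cdots D_{n-1}>0$, which is not automatic because membership in $K^+$ also requires a non-negativity condition against \emph{all} nef classes; and (b) carrying out the nef-to-ample perturbation without losing strong semistability — since strong semistability is not visibly preserved under $D_i\rightsquigarrow D_i+tH$, this forces one to work with the weak Harder--Narasimhan filtration of Subsection~\ref{subsec approx} and with the finite-determinacy theorem, and to keep the bookkeeping tight enough that exactly $\beta_{r'}$ appears. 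By contrast $(iii)\Rightarrow(ii)$, $(iii)\Rightarrow(i)$ and $(ii)\Rightarrow(iv)$ are immediate once the Hodge index theorem and the Frobenius scaling $\Delta((F^e)^*E)=p^{2e}\Delta(E)$ are available.
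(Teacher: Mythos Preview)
Your cycle runs in the opposite direction from the paper's: the paper proves $(i)\Rightarrow(ii)\Rightarrow(iii)\Rightarrow(iv)$ (with $(iv)\Rightarrow(i)$ trivial), whereas you attempt $(iv)\Rightarrow(iii)\Rightarrow(i)\Rightarrow(ii)\Rightarrow(iv)$. The implications $(iii)\Rightarrow(i)$, $(ii)\Rightarrow(iv)$, and your sketch of $(i)\Rightarrow(ii)$ are all fine and essentially match the paper. The problem is entirely in $(iv)\Rightarrow(iii)$.

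The gap is the Frobenius replacement. You write ``after replacing $E$ by a high Frobenius pullback \ldots\ I may assume the factors of the HN filtration of $E$ are strongly semistable,'' but this replacement is \emph{not} without loss of generality for statement $(iii)$: the conclusion of $(iii)$ demands a saturated subsheaf of the \emph{given} $E$ with $\xi\in K^+$, and a subsheaf of $(F^e)^*E$ does not descend to one of $E$. Concretely, suppose $E$ is $(D_1,\dots,D_{n-1})$-semistable but not strongly semistable, with $d\,\Delta(E)D_2\cdots D_{n-1}+\beta_{r'}<0$. Assumption $(iv)$ only tells you $E$ is not \emph{strongly} semistable; it does not rule out that $E$ is semistable, so you cannot even start the HN-filtration argument on $E$ itself. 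Passing to $(F^e)^*E$ gives you a genuine HN filtration and lets the rest of your argument run, but the subsheaf you extract lives in the wrong sheaf.

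This is exactly why the paper's order matters. The implication $(i)\Rightarrow(ii)$ is where the Frobenius machinery (finite determinacy, the $\alpha$-bound of Proposition~\ref{bound of alpha E}) is spent, and its output is the statement that a \emph{semistable} sheaf already satisfies $d\,\Delta(E)D_2\cdots D_{n-1}+\beta_{r'}\ge 0$. With $(ii)$ in hand for rank $r'$, the hypothesis of $(iii)$ immediately forces $E$ to be non-semistable, and the induction on rank (your telescoped discriminant identity plus the cone argument) proceeds on $E$ itself with no Frobenius pullback needed. In other words, $(ii)$ is precisely the strengthening of $(iv)$ from ``not strongly semistable'' to ``not semistable'' that your argument is missing; you cannot postpone it to after $(iii)$.

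A secondary point: even granting the above, your extraction of a subsheaf with $\xi\in K^+$ from ``$\sum_k a_k\eta_k^2 D_2\cdots D_{n-1}>0$'' is too quick. Positive self-intersection alone does not place a class in $K^+$; the paper handles this (in its $(ii)\Rightarrow(iii)$) by an iterative replacement $E'\rightsquigarrow G$ that strictly enlarges an auxiliary cone $\mathcal{C}(\xi_{E',E})$ and terminates because the possible $\xi$'s lie in a finite set. Your sketch (``which one arranges from the ordering of the HN slopes'') does not supply this.
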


\begin{remark}
In terms of the notations as in \cite{langer}, the above corresponds to $T^5(r) \Rightarrow T^3(r) \Rightarrow T^4(r) \Rightarrow T^2(r) \Rightarrow T^5(r)$.
Note that the last implication from $T^2(r)$ to $T^5(r)$ is trivial, as the statement (i) is the same as (iv) but with a stronger assumption.

\end{remark}
\begin{remark}
In characteristic zero, since $\beta_r=0$ and the strong semistability is the same as the semistability, the statement above can be more or less combined into one single statement about the non-negativity of $\Delta(E)D_2\cdots D_{n-1}\geq 0$.
This was first proved by Bogomolov in \cite{bogomolov}, thus the name.
\end{remark}

\subsection{$(i) \Rightarrow (ii)$}
In this subsection, our goal is to prove the implication $(i) \Rightarrow (ii)$ in Theorem \nameref{BI} for torsion-free sheaf $E$ of rank $r$, and we follow \cite[$\S$3.6]{langer}.

To prove the statement, we start with the following preparation.
Let $H$ be an ample line bundle over $X$, and let $(D_1,\ldots ,D_{n-1})$ be as in Theorem \nameref{BI} (ii).
Then $(H_1(t),\ldots, H_{n-1}(t)):=(D_1+tH, \ldots, D_{n-1}+tH)$ is a polarization of ample line bundles.
\begin{lemma}\label{T5(r) to T3(r) lem}
	Assume Theorem \nameref{BI} (i).
	Then for $t>0$ we have
	\begin{multline*}
	H_1(t)^2H_2(t)\cdots H_{n-1}(t)\cdot \Delta(E)H_2(t)\ldots H_{n-1}(t) + \\ r^2 (L_{max, t}(E)-\mu_{t} (E))(\mu_{t}(E)-L_{min, t}(E))\geq 0.
	\end{multline*}
\end{lemma}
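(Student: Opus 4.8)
The plan is to reduce the statement about a general semistable sheaf $E$ to the case of a strongly semistable sheaf, to which Theorem \nameref{BI}~(i) applies directly, and then to control the error terms that arise in the reduction. Fix $t>0$, so that $(H_1(t),\ldots,H_{n-1}(t))$ is an ample polarization. First I would pass to the Frobenius pullbacks: by the finite determinacy of the Harder--Narasimhan filtration (Theorem \ref{fdHN}), there is an $e_0$ such that the Harder--Narasimhan factors of $(F^{e_0})^*E$ with respect to this polarization are all strongly semistable. Denote by $0=G_0\subset G_1\subset\cdots\subset G_m=(F^{e_0})^*E$ this filtration and by $r_i,\mu_i$ the ranks and slopes of the factors $G_i/G_{i-1}$, with $\mu_1>\cdots>\mu_m$.

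Next I would expand the discriminant along the filtration. There is a standard identity (obtained by repeatedly applying the definition of $\Delta$ to short exact sequences) of the form
\[
\Delta\big((F^{e_0})^*E\big) = \sum_{i=1}^m \frac{r}{r_i}\,\Delta(G_i/G_{i-1}) - \sum_{i<j} \frac{r_i r_j}{r}\,\big(\xi_i-\xi_j\big)^2,
\]
where $\xi_i = c_1(G_i/G_{i-1})/r_i - c_1((F^{e_0})^*E)/r$, and after intersecting with $H_2(t)\cdots H_{n-1}(t)$ each $\Delta(G_i/G_{i-1})H_2(t)\cdots H_{n-1}(t)\geq 0$ by Theorem \nameref{BI}~(i). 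The cross terms $-(\xi_i-\xi_j)^2H_2(t)\cdots H_{n-1}(t)$ are controlled from below using the Hodge index theorem: since $\mu_t(G_i/G_{i-1})-\mu_t(G_j/G_{j-1})$ is, up to the fixed positive number $H_1(t)^2H_2(t)\cdots H_{n-1}(t)$, the $H_1(t)$-component of $\xi_i-\xi_j$, the index theorem bounds $(\xi_i-\xi_j)^2 H_2(t)\cdots H_{n-1}(t)$ below by a multiple of the square of that slope difference divided by $d(t):=H_1(t)^2H_2(t)\cdots H_{n-1}(t)$. Assembling these bounds and then dividing out the Frobenius scaling $\mu_t((F^{e_0})^*E)=p^{e_0}\mu_t(E)$ (so $\Delta$ scales by $p^{2e_0}$) yields, after rearranging,
\[
d(t)\cdot\Delta(E)H_2(t)\cdots H_{n-1}(t) + \text{(a sum of squares of slope gaps)} \geq 0.
\]

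Finally I would identify the error term. The slopes appearing in the Harder--Narasimhan filtration of the Frobenius pullback, rescaled by $p^{e}$ and passed to the limit, are exactly the data computing $L_{\max,t}(E)$ and $L_{\min,t}(E)$; the extreme factors contribute $p^{e_0}L_{\max,t}(E)$ and $p^{e_0}L_{\min,t}(E)$ in the limit, while the inner ones lie between. A convexity estimate on the Harder--Narasimhan polygon then shows that the full sum of squares of slope gaps, weighted by the $r_ir_j$, is at most $r^2(L_{\max,t}(E)-\mu_t(E))(\mu_t(E)-L_{\min,t}(E))$ — this is the standard fact that the "spread" of a convex polygon above a chord is maximized when concentrated at the two ends. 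Substituting gives the asserted inequality. I expect the main obstacle to be precisely this last bookkeeping step: carefully matching the telescoping sum of $r_ir_j(\mu_i-\mu_j)^2$-type terms against the product $r^2(L_{\max,t}-\mu_t)(\mu_t-L_{\min,t})$, and making sure the passage $e_0\to\infty$ (or the choice of a single $e_0$ together with the inequality $p^e\mu_{\max}$-type bounds) is handled uniformly in $t$; the Hodge-index input and the additivity of $\Delta$ are routine by comparison.
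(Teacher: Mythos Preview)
Your proposal is correct and follows essentially the same route as the paper: pass to a Frobenius pullback whose Harder--Narasimhan factors are strongly semistable, expand $\Delta$ along the filtration, apply Theorem~\nameref{BI}~(i) to kill the factor discriminants, use the Hodge index theorem to replace the Chern-class cross terms by slope differences, and bound $\sum_{i<j} r_ir_j(\mu_i-\mu_j)^2$ by $r^2(\mu_1-\mu)(\mu-\mu_m)$ (this is exactly the elementary inequality~\eqref{EquationElementaryInequality}). One small clarification: no limit $e_0\to\infty$ is needed---once the factors of $(F^{e_0})^*E$ are strongly semistable, one has $\mu_1/p^{e_0}=L_{\max,t}(E)$ and $\mu_m/p^{e_0}=L_{\min,t}(E)$ on the nose, so the $p^{2e_0}$ scaling in $\Delta$ cancels exactly against the scaling in the slope terms.
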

Here we follow the notations $L_{max}$ and $L_{min}$ as in Subsection \ref{positive char}, and denote $L_{max, t}$, $L_{min, t}$ and $\mu_t$ as the ones defined using the polarization $(H_1(t),\ldots, H_{n-1}(t))$.
\begin{remark}\label{T5(r) to T3(r) rmk}
Before we prove the lemma, it is worth noting that the above inequality together with its proof, which only uses Theorem \nameref{BI} (i), applies also to a general polarization $(D_1,\ldots, D_{n-1})$ for ample $D_i$, in place of $(H_1(t),\ldots, H_{n-1}(t))$.
More explicitly, Theorem \nameref{BI}, (i) will imply the inequality
\[
D_1^2D_2\cdots D_{n-1}\cdot \Delta(E)D_2\cdots D_{n-1} + r^2(L_{\max}-\mu)(L_{\min}-\mu) \geq 0,
\]
where $D_i$ are all ample.
\end{remark}
\begin{proof}
		We first notice that by Theorem \ref{fdHN}, there exists $k\in \mathbb{N}$ such that all of the factors in the Harder-Narasimhan filtration of $(F^k)^*E$ are strongly semistable.
	Denote $0= E_0\subset \cdots E_m=(F^k)^*E$ to be the corresponding filtration of $(F^k)^*E$, and let $F_i$ be the quotient $E_i/E_{i-1}$, $r_i=\mathrm{rk}(F_i)$, and $\mu_{i,t}=\mu_t(F_i)$.
	Then by the Hodge index theorem,
	\footnote{Here we are using the form of the Hodge index theorem that $(D^2D_2\ldots D_{n-1}) \cdot (D_1^2D_2\ldots D_{n-1}) \leq (DD_1D_2\ldots D_{n-1})^2$, where $D_i$ are nef divisors satisfying $D_1^2D_2\cdots D_{n-1}>0$. This can be proved using the standard Hodge index theorem for ample divisors together with the approximation technique as in Subsection \ref{subsec approx}.} 
	we have
	\begin{align*}
		&\frac{\Delta((F^k)^*E) H_2(t)\cdots H_{n-1}(t)}{r} \\ & = \sum_i \frac{\Delta(F_i)H_2(t)\cdots H_{n-1}(t)}{r_i} - \frac{1}{r} \sum_{i<j} r_ir_j(\frac{c_1 (F_i)}{r_i} - \frac{c_1 (F_j)}{r_j})^2 H_2(t)\cdots H_{n-1}(t) \\
		& \geq \sum_i \frac{\Delta(F_i)H_2(t)\cdots H_{n-1}(t)}{r_i} - \frac{1}{rd}\sum_{i<j} r_ir_j(\mu_{i,t}-\mu_{j,t})^2.
	\end{align*}
The assumption of Theorem \nameref{BI} (i) and the ampleness of $H_i(t)$ provides us with the inequality
\[
\Delta(F_i)H_2(t)\cdots H_{n-1}(t) \geq 0.
\]
On the other hand, we have the following elementary inequality on $r_i$, $\mu_i$ (with $r=\sum_i r_i$ and $r\mu = \sum_i r_i\mu_i$) that
\begin{equation}
\label{EquationElementaryInequality}
\sum_{i<j} r_ir_j(\mu_i-\mu_j)^2 \leq r^2 (\mu_1-\mu)(\mu-\mu_m).
\end{equation}
Combine with the inequality above, we get the inequality in the statement of lemma.
\end{proof}

With the above lemma together with the approximation technique as in Subsection \ref{subsec approx}, we are ready to prove the aforementioned implication.
\begin{proof}[Proof of Theorem \nameref{BI}, $(i)\Rightarrow (ii)$]
	Let $H$ be a fixed ample line bundle.
	By Theorem \ref{approx}, we can find a filtration of torsion-free coherent subsheaves $0=E_0\subset E_1\subset \cdots \subset E_m=E$ of $E$ such that it is the Harder-Narasimhan filtration with respect to the polarizations $(H_1(t),\ldots, H_{n-1}(t))$, where  $t>0$ is very close to $0$.
	As $E$ is $(D_1,\ldots, D_{n-1})$-semistable, we have
	\begin{align*}
		\mu(E) & \geq \mu(E_1)\\
		& = \lim_{t \rightarrow 0^+} \mu_t(E_1)\\
		& \geq \lim_{t \rightarrow 0^+} \mu_t(E)\\
		& = \mu(E).
	\end{align*}
By the choice of $E_1$, the above implies the equality
\[
\mu(E)=\lim_{t \rightarrow 0^+} \mu_{{max},t}(E).
\]
Similarly, we have
\[
\mu(E) = \lim_{t \rightarrow 0^+} \mu_{{min},t}(E).
\]
On the other hand, Proposition \ref{bound of alpha E} states that for a nef divisor $A$ such that $T_X(A)$ is globally generated, we have
\begin{align*}
	L_{{max},t}(E)-\mu_{{max},t}(E),~\mu_{{min}, t}(E) - L_{{min}, t}(E) \leq \frac{r-1}{p-1} AH_1(t)\cdots H_{n-1}(t).
\end{align*}
In particular, the limit of the right hand side of the inequality above is equal to $\frac{r-1}{p-1}AD_1\cdots D_{n-1}$, whose square is $\frac{1}{r^2}\beta_r$.
In this way, apply the inequality of Lemma \ref{T5(r) to T3(r) lem} for each $t>0$, we get
\begin{align*}
		 &D_1^2D_2\cdots D_{n-1}\cdot \Delta(E)D_2\ldots D_{n-1} + \beta_r \\
		 &= \lim_{t \rightarrow 0^+} \Bigg( D_1^2D_2\cdots D_{n-1}\cdot \Delta(E)D_2\ldots D_{n-1}  + {r^2}\left(\frac{r-1}{p-1} AH_1(t)\cdots H_{n-1}(t)\right)^2 \Bigg)\\
		 &\geq  \lim_{t \rightarrow 0^+} \left( H_1(t)^2H_2(t)\cdots H_{n-1}(t) \cdot \Delta(E) H_2(t)\cdots H_{n-1}(t) \right) \\ 
		&~~~~~~~~~~~~~~~~~~~~~~~~~~~~+ r^2(L_{{max}, t}(E) -\mu_t(E)) (\mu_t(E) - L_{{min}, t}(E))  \\
        &\geq 0.
\end{align*}
\end{proof}


\subsection{$(ii)\Rightarrow (iii)$}\label{sub ii to iii}
In this subsection, we show the implication $(ii) \Rightarrow (iii)$, following \cite[$\S$3.7]{langer}

As a preparation,  we define an open cone in $\mathrm{Num}(X)$ as follows.
\begin{definition}\label{DefOfCone}
Let $K^+$ be the following open cone in $\mathrm{Num}(X)$:
\begin{multline*}
    K^+ = \{D\in \text{Num}(X): D^2D_2 \cdots D_{n-1} > 0, \text{and}  \ DD' D_2\cdots D_{n-1} \geq 0 \\ \text{for all nef } D'\}.
\end{multline*}
\end{definition}
The following description is used in the proof.

\begin{lemma}\label{DescripOfCone}
	A divisor $D \in K^+$ if and only if it satisfies the inequalities $DLD_2 \cdots D_n > 0$ for all $L \in \bar{K}^+\setminus \{0\}$.
\end{lemma}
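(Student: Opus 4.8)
The plan is to reduce the statement to the classical surface-type picture, in which $K^+$ is recognized as one of the two ``nappes'' of the positive cone $\{q>0\}$ of the intersection pairing, and then to deduce both implications from an elementary Cauchy--Schwarz computation in suitable coordinates. First I would set up the form: write $b(D,D')=D\cdot D'\cdot D_2\cdots D_{n-1}$ and $q(D)=b(D,D)$ on $\mathrm{Num}(X)$ (regarded, as usual, modulo the classes pairing trivially with everything, so that $b$ is non-degenerate), and assume $d=q(D_1)>0$, which is the case in all our applications; then $D_1$ itself lies in $K^+$. By the Hodge index theorem --- classical when $D_2,\dots,D_{n-1}$ are ample, and in general obtained by perturbing to $D_i+tH$ and letting $t\to 0^+$ as in Subsection \ref{subsec approx} --- the form $b$ has signature $(1,-1,\dots,-1)$. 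I would then fix a $b$-orthogonal basis $e_1,\dots,e_\rho$ with $q(e_1)=1$ and $q(e_i)=-1$ for $i\ge 2$, normalized so that $b(e_1,D_1)>0$ (possible because $q(D_1)>0$), and use coordinates $D=(x_1,\dots,x_\rho)$, so that $q(D)=x_1^2-\sum_{i\ge 2}x_i^2$.

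The central step is to show that $K^+$ (Definition \ref{DefOfCone}) is exactly the ``future cone'' $\mathcal{C}^+:=\{q>0,\ x_1>0\}$; it then follows that $\bar{K}^+=\overline{\mathcal{C}^+}=\{q\ge 0,\ x_1\ge 0\}$, and that every nonzero $L=(y_1,\dots,y_\rho)\in\bar{K}^+$ has $y_1>0$ (since $y_1=0$ and $q(L)\ge 0$ force $L=0$). Two inputs go into this: that every nef class $D'$ has $q(D')\ge 0$, and more generally $b(D',D'')\ge 0$ for nef $D''$, because an intersection of nef classes is non-negative; and the Lorentzian reverse Cauchy--Schwarz inequality, which says that $b(u,v)\ne 0$ whenever $q(u),q(v)>0$, with $b(u,v)>0$ precisely when $u$ and $v$ lie in the same nappe of $\{q>0\}$. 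Granting these: if $D\in K^+$ then $q(D)>0$, and $b(D,D_1)\ge 0$ by definition of $K^+$ (as $D_1$ is nef), hence $b(D,D_1)>0$, so $D$ lies in the nappe of $D_1$, which is $\mathcal{C}^+$. Conversely, if $D\in\mathcal{C}^+$ and $D'$ is nef then $q(D')\ge 0$; if $q(D')>0$, then since $b(D',D_1)\ge 0$ (product of nef classes) and $q(D'),q(D_1)>0$, also $D'\in\mathcal{C}^+$, whence $b(D,D')>0$; and if $q(D')=0$ one applies the previous case to the positive-square nef classes $D'+\varepsilon D_1$ and lets $\varepsilon\to 0^+$ to get $b(D,D')\ge 0$. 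Thus $D\in K^+$.

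With $K^+=\mathcal{C}^+$ and the description of $\bar{K}^+$ in hand, both directions are short computations. If $D\in K^+$ and $0\ne L\in\bar{K}^+$, then $x_1,y_1>0$, $x_1^2>\sum_{i\ge2}x_i^2$ and $y_1^2\ge\sum_{i\ge2}y_i^2$, so ordinary Cauchy--Schwarz gives $\sum_{i\ge2}x_iy_i\le(\sum_{i\ge2}x_i^2)^{1/2}(\sum_{i\ge2}y_i^2)^{1/2}<x_1y_1$, i.e.\ $b(D,L)>0$. Conversely, suppose $b(D,L)>0$ for all nonzero $L\in\bar{K}^+$. Taking $L=e_1\in K^+$ gives $x_1>0$; then if $x_2=\dots=x_\rho=0$ we already have $q(D)=x_1^2>0$, while otherwise taking $L$ to be the isotropic class $\big(\sqrt{\sum_{i\ge2}x_i^2},\,x_2,\dots,x_\rho\big)\in\bar{K}^+\setminus\{0\}$ forces $x_1>\sqrt{\sum_{i\ge2}x_i^2}$, hence again $q(D)>0$. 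In either case $D\in\mathcal{C}^+=K^+$.

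I expect the main obstacle to be the passage from ample to merely nef divisors in the first two steps --- this is the one place the argument genuinely needs care, as opposed to the surface case with ample $D_i$. It rests on two facts: that the Hodge index signature persists for the nef-weighted pairing, which the approximation technique of Subsection \ref{subsec approx} supplies, and, more importantly, that nef classes still have non-negative self-intersection, which is exactly what rules out the otherwise-problematic case of a nef class of negative square in the analysis of $\mathcal{C}^+$. Once $b$ is known to be Lorentzian with $D_1$ in the positive cone, everything reduces to the elementary two-nappes picture.
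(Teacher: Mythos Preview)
Your argument is correct and follows essentially the same route as the paper's intended one: pass to the Lorentzian form $b(D,D')=DD'D_2\cdots D_{n-1}$ via the Hodge index theorem, identify $K^+$ with the forward nappe $\{q>0,\ x_1>0\}$ in orthogonal coordinates, and then verify both implications by the elementary Cauchy--Schwarz computation with the two test classes $e_1$ and the isotropic $(\sqrt{\sum_{i\ge 2}x_i^2},x_2,\dots,x_\rho)$. Your treatment of the passage from ample to nef $D_i$ --- perturbing to $D_i+tH$ for the signature, and using non-negativity of nef intersections together with the limit $D'+\varepsilon D_1$ for the borderline case $q(D')=0$ --- is exactly the care that the argument requires and that the paper leaves implicit.
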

Before we start, we also mention several elementary computational results whose proofs are left as exercises to the reader.
\begin{lemma}\label{DiscriminantSES}
Let $0 \to E' \to E \to E'' \to 0$ be a short exact sequence of coherent sheaves. Let $r'$ (resp. $r''$) be the rank of $E'$ (resp. $E''$), and assume they are positive. We have 
\begin{multline*}
    \frac{\Delta(E)D_2 \cdots D_{n-1}}{r} + \frac{rr'}{r''}\xi_{E', E}^2D_2 \cdots D_{n-1} = \frac{\Delta(E')D_2 \cdots D_{n-1}}{r'} + \\ \frac{\Delta(E'')D_2 \cdots D_{n-1}}{r''}
\end{multline*}
\end{lemma}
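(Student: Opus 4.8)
The plan is to discard the fixed nef classes $D_2,\dots,D_{n-1}$ at the outset and prove the corresponding identity of codimension-two cycle classes on $X$, since both sides of the asserted equality are $\mathbb{Q}$-linear in a single such class; intersecting with $D_2\cdots D_{n-1}$ at the end then recovers the stated formula. First I would record the Whitney sum formula for the exact sequence $0\to E'\to E\to E''\to 0$: since $X$ is smooth, every coherent sheaf admits a finite locally free resolution, so Chern classes are defined in the Chow ring and the total Chern class is multiplicative, $c(E)=c(E')c(E'')$. In degrees $1$ and $2$ this gives $c_1(E)=c_1(E')+c_1(E'')$ and $c_2(E)=c_2(E')+c_2(E'')+c_1(E')c_1(E'')$, and of course $r=r'+r''$; the hypothesis that $r'$ and $r''$ are positive is exactly what makes the divisions in the statement legitimate.

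For the computation itself I would set $a=c_1(E')$ and $b=c_1(E'')$, so that $c_1(E)=a+b$ and
\[
\xi_{E',E}=\frac{a}{r'}-\frac{a+b}{r}=\frac{r''a-r'b}{rr'},\qquad \frac{rr'}{r''}\,\xi_{E',E}^2=\frac{(r''a-r'b)^2}{rr'r''}.
\]
Substituting these, together with $\Delta(E)=2r\,c_2(E)-(r-1)c_1(E)^2$, into the left-hand side and using the two Whitney identities, the coefficient of $c_2(E')$ becomes $2$ on both sides, and likewise for $c_2(E'')$, so those contributions cancel. What remains is the purely bilinear identity
\[
2ab-\frac{r-1}{r}(a+b)^2+\frac{(r''a-r'b)^2}{rr'r''}=-\frac{r'-1}{r'}\,a^2-\frac{r''-1}{r''}\,b^2,
\]
which I would verify by collecting the coefficients of $a^2$, $ab$ and $b^2$ separately. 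Each vanishes once $r=r'+r''$ is substituted and denominators are cleared: the $ab$-coefficient is $2-\tfrac{2(r-1)}{r}-\tfrac{2}{r}=0$, the $a^2$-coefficient over the common denominator $rr'$ has numerator $-(r-1)r'+r''+(r'-1)r=r'+r''-r=0$, and the $b^2$-coefficient follows by the symmetric computation.

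I do not expect any genuine obstacle here; the entire content is the bookkeeping of Chern classes. The only points worth a sentence of care are that the Whitney formula applies to arbitrary coherent sheaves on the smooth variety $X$ through locally free resolutions, and that---exactly as for the neighbouring elementary lemmas in this subsection---one passes freely between the identity of codimension-two cycle classes and the identity of intersection numbers obtained by capping with $D_2,\dots,D_{n-1}$.
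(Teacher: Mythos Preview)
Your proof is correct. The paper does not supply its own argument for this lemma---it is explicitly listed among the ``elementary computational results whose proofs are left as exercises to the reader''---and the direct Whitney-formula computation you carry out is exactly the intended exercise.
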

\begin{lemma}\label{xiSES}
Let $0 \to E' \to E \to E'' \to 0$ be a short exact sequence of coherent sheaves. Let $r'$ ($r''$) be the rank of $E'$ ($E''$) respectively.

\begin{itemize}
    \item If $G$ is a non-trivial subsheaf of $E'$, then 
    $$\xi_{G, E} = \xi_{E', E} + \xi_{G, E'}.$$
    \item If $G'' \subset E''$ is a proper subsheaf of rank $s$, and denote by $G$ the kernel of the map $E \to E''/G''$, then
    $$\xi_{G, E} = \frac{r'(r''-s)}{(r'+s)r''}\xi_{E', E} + \frac{s}{r'+s}\xi_{G'', E''}.$$
\end{itemize}
\end{lemma}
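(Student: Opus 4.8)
The plan is to expand both identities directly from the definition $\xi_{G,E} = \frac{c_1(G)}{\mathrm{rk}(G)} - \frac{c_1(E)}{r}$ and the additivity of $c_1$ and of rank in short exact sequences. For the first bullet, I would write $s = \mathrm{rk}(G)$, observe that $\xi_{G,E} + \xi_{G,E'}$ telescopes: $\left(\frac{c_1(G)}{s} - \frac{c_1(E')}{r'}\right) + \left(\frac{c_1(E')}{r'} - \frac{c_1(E)}{r}\right) = \frac{c_1(G)}{s} - \frac{c_1(E)}{r} = \xi_{G,E}$. Wait — comparing with the stated identity $\xi_{G,E} = \xi_{E',E} + \xi_{G,E'}$, note $\xi_{E',E} = \frac{c_1(E')}{r'} - \frac{c_1(E)}{r}$, so indeed $\xi_{E',E} + \xi_{G,E'} = \frac{c_1(E')}{r'} - \frac{c_1(E)}{r} + \frac{c_1(G)}{s} - \frac{c_1(E')}{r'} = \frac{c_1(G)}{s} - \frac{c_1(E)}{r}$, which is exactly $\xi_{G,E}$. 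This case is purely formal cancellation.

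For the second bullet, I would set up the notation carefully: $G$ is defined by the exactness of $0 \to E' \to G \to G'' \to 0$ (since $G$ is the kernel of $E \to E''/G''$, it surjects onto $G''$ with kernel $E'$), so $\mathrm{rk}(G) = r' + s$ and $c_1(G) = c_1(E') + c_1(G'')$. Then $\xi_{G,E} = \frac{c_1(E') + c_1(G'')}{r'+s} - \frac{c_1(E)}{r}$, and using $c_1(E) = c_1(E') + c_1(E'')$ and $r = r' + r''$, I would substitute $c_1(E') = r'\left(\xi_{E',E} + \frac{c_1(E)}{r}\right)$ and $c_1(G'') = s\left(\xi_{G'',E''} + \frac{c_1(E'')}{r''}\right)$, then collect the coefficients of $\xi_{E',E}$, $\xi_{G'',E''}$, and the terms involving $c_1(E)$ and $c_1(E'')$. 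The claim is that the $c_1(E)$ and $c_1(E'')$ contributions cancel (which they must, since the left side is linear in the $\xi$'s and the identity is homogeneous), leaving coefficient $\frac{r'}{r'+s} - \frac{r'}{r}\cdot(\text{something})$ that simplifies to $\frac{r'(r''-s)}{(r'+s)r''}$ for $\xi_{E',E}$ and $\frac{s}{r'+s}$ for $\xi_{G'',E''}$.

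Since both identities are stated to be left as exercises, I would simply record that they follow from a direct computation with $c_1$ and ranks, perhaps displaying the one nontrivial substitution for the second bullet. The only mild subtlety — which I would double-check rather than treat as an obstacle — is the rank bookkeeping in the second part: confirming that $G$ has rank exactly $r' + s$ and that the coefficient of $\xi_{E',E}$ really is $\frac{r'(r''-s)}{(r'+s)r''}$ and not its reciprocal or a sign variant. There is no genuine difficulty here; the computation is elementary linear algebra in $\mathrm{Num}(X) \otimes \mathbb{Q}$.
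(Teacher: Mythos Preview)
Your approach is correct and is exactly what the paper intends: the lemma is explicitly left as an exercise (``elementary computational results whose proofs are left as exercises to the reader''), and your direct expansion from the definition of $\xi$ together with additivity of rank and $c_1$ is the natural way to do it.

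One small correction to your outline for the second bullet: the $c_1(E)$ and $c_1(E'')$ terms do not simply cancel. After your substitution you are left with
\[
-\frac{s}{r(r'+s)}c_1(E) + \frac{s}{r''(r'+s)}c_1(E''),
\]
and since $\frac{c_1(E'')}{r''} - \frac{c_1(E)}{r} = \xi_{E'',E} = -\frac{r'}{r''}\xi_{E',E}$, this residual term contributes an additional $-\frac{sr'}{r''(r'+s)}\xi_{E',E}$. Combining with the $\frac{r'}{r'+s}\xi_{E',E}$ you already have gives the stated coefficient $\frac{r'(r''-s)}{(r'+s)r''}$. So the bookkeeping is slightly more than pure cancellation, but the computation is still routine and your plan carries through.
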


\begin{proof}[Proof of Theorem \nameref{BI}, $(ii)\Rightarrow (iii)$]
We prove (iii) inductively on $\mathrm{rk}(E)$ $=r$, assuming the statement (ii). 
As a starting point, we notice that when $E$ is of rank one, as $\Delta(E)=0$ and $\beta_1\geq 0$, the statement (iii) is automatically true.
In general, the assumption of Theorem \nameref{BI} (ii) tells us that $E$ is not semistable with respect to $(D_1, \ldots, D_{n-1})$. Thus we have the maximal destabilizing subsheaf $E'\subset E$ with respect to the polarization. Let $E'' = E/E'$, and $r', r''$ be the ranks of $E', E''$ respectively. Then by Lemma \ref{DiscriminantSES} we have 
\begin{multline*}
    \frac{\Delta(E)D_2 \cdots D_{n-1}}{r} + \frac{rr'}{r''}\xi_{E', E}^2D_2 \cdots D_{n-1} = \\ \frac{\Delta(E')D_2 \cdots D_{n-1}}{r'} + \frac{\Delta(E'')D_2 \cdots D_{n-1}}{r''}.
\end{multline*}
We also have 
$$ \frac{\beta_r}{r} \ge \frac{\beta_{r'}}{r'}+\frac{\beta_{r''}}{r''}.$$
So by multiplying $d=D_1^2D_2\cdots D_{n-1}$, either $\xi_{E', E}^2D_2 \cdots D_{n-1} >0$, 
or one of $d\Delta(E')D_2\cdots D_{n-1}+\beta_{r'}$ or $d\Delta(E'')D_2\cdots D_{n-1}+\beta_{r''}$ is negative. 

In the first case above, we claim that $\xi_{E',E}\in K^+$.
To see this, by the definition of $K^+$ and the assumption on $\xi_{E',E}$ it suffices to show that for any $L\in {K}^+\backslash \{0\}$ we have $\xi_{E',E}LD_2\cdots D_{n-1}>0$.
Note first the assumption of $E'$ and $E$ implies that the inequality is true for $L=D_1$, and it reduces to show the sign of the function $L \mapsto \xi_{E',E}LD_2\cdots D_{n-1}$ is not changing on $K^+$.
This then follows from the continuity of the function, the connectivity of $K^+$, and the  Hodge Index Theorem that 
\[
(\xi_{E',E}LD_2\cdots D_{n-1})^2 \geq  \xi_{E',E}^2D_2\cdots D_{n-1} \cdot L^2 D_2\cdots D_{n-1} >0,
\]
for $L\in K^+\backslash \{0\}$.

Suppose now $\xi_{E', E}^2D_2 \cdots D_{n-1} \leq 0$ and we are in the second case above. Note that since both $E'$ and $E''$ are of smaller ranks, we can apply the induction hypotheses of ranks to get the following dichotomy: 
\begin{itemize}
    \item There is a saturated subsheaf $G \subset E'$ such that $\xi_{G, E'} \in K^+$, or
    \item There is a saturated subsheaf $G'' \subset E''$ such that $\xi_{G'', E''} \in K^+$.
\end{itemize}
Applying Lemma \ref{xiSES}, we get in either case that $\xi_{G, E}$ is a positive linear combination of $\xi_{E', E}$ and some element $L$ in $K^+$.

To proceed, define the open subcone $\mathcal{C}(\xi) := \{D \in \overline{K^+}\setminus \{0\} : \xi \cdot DD_2\cdots D_{n-1} >0\} \subset \overline{K^+}$ for a given $\xi \in \text{Num}(X)$. We observe that by replacing $E'$ with $G$ in either case, we get strictly larger cones than before: $\mathcal{C}(\xi_{E', E}) \subsetneq \mathcal{C}(\xi_{G, E})$. 
Here the inclusion part is clear, as $\xi_{G,E}$ is the sum of $\xi_{E',E}$ with some $L\in K^+$, where the second term only contributes positive values in the product $\xi_{G,E}DD_2\cdots D_{n-1}$.
To see the inclusion is strict, as $\xi_{E',E}$ is not in $K^+$, by Lemma \ref{DescripOfCone} there exists some $L'\in \overline{K^+}\backslash\{0\}$ such that $\xi_{E',E}L'D_2\cdots D_{n-1}\leq 0$ but $LL'D_2\cdots D_{n-1}>0$.
On the other hand, as $E'$ is the maximal destabilizing subsheaf of $E$ with respect to the polarization $(D_1,\ldots,D_{n-1})$, we have $\xi_{E',E}D_1\cdots D_{n-1}>0$ and $LD_1\cdots D_{n-1}> 0$, where the latter follows as $L\in K^+$.
In this way, there exists some real number $t\in [0,1)$ such that $\xi_{E',E}(tD_1+(1-t)L')D_2\cdots D_{n-1}=0$, and thus
\begin{align*}
    \xi_{G,E} & (tD_1+(1-t)L') D_2\cdots D_{n-1} \\
    & = \xi_{E',E}(tD_1+(1-t)L')D_2\cdots D_{n-1} \\
    &+ L(tD_1+(1-t)L')D_2\cdots D_{n-1}\\
    & = 0 + L(tD_1+(1-t)L')D_2\cdots D_{n-1} \\
    & > 0 = \xi_{E',E}(tD_1+(1-t)L')D_2\cdots D_{n-1}.
\end{align*}
So the element $tD_1+(1-t)L'$ is in $\mathcal{C}(\xi_{G,E})$ but not in $\mathcal{C}(\xi_{E',E})$.
In this way, we get a sequence of strictly increasing subcones of $\overline{K^+}$ by replacing $E'$ by $G$, until we reach to the situation where $\xi_{E', E}^2 D_2 \cdots D_{n-1}>0$. 

It remains to show that this process terminates within finite replacements. First note that by choosing ample $\mathbb{R}$-basis $H_1, \cdots, H_\rho$, which are contained in $\mathcal{C}(\xi_{E', E})$ for $\text{Num}(X)$, 
\footnote{Note that since $\mathcal{C}(\xi_{E'E})$ is an open subcone in $K^+\subset \mathrm{Num}(X)$ that is non-empty (it contains $D_1$), such a basis of $H_i$ exists.}
we have $$\xi_{G, E} \in \frac{1}{r!} (\mathbb{Z}H_1 + \cdots +\mathbb{Z} H_\rho).$$ Furthermore, we also have 
$$0 < \xi_{G, E} H_j D_2 \cdots D_{n-1} < \mu_{\max}^j(E) - \mu^j(E),$$
for all $j = 1, 2, \cdots, \rho$, where $\mu^j$ denote the slopes with respect to $(H_j, D_2, \cdots, D_{n-1})$. Thus $\xi_{G, E}$ is in fact contained in a bounded  discrete hence finite subset of $\text{Num}(X)$.
\end{proof}

\subsection{$(iii)\Rightarrow(iv)$}
In this section, we follow \cite[$\S$3.8]{langer} to show the implication $(iii)\Rightarrow (iv)$.

We follow the definition of $\beta_r$ as in the beginning of this section, and the definition of the cone $K^+$ with respect to the polarization $(D_1,\ldots, D_{n-1})$ as in the last subsection.

By the definition of $\xi_{E',E}$ and $K^+$, the statement (iii) implies that $E$ is not semistable with respect to $(D_1,\ldots, D_{n-1})$.
\begin{remark}
The implication $(iii) \implies (iv)$ is easy in $\mathrm{char}(k) = 0$, by the definition of $K^+$ applying at $\xi_{E',E}D_1\cdots D_{n-1}$.
\end{remark}

\begin{proof}[Proof of Theorem \nameref{BI}, $(iii)\Rightarrow (iv)$]
Assume $E$ is a torsion-free sheaf such that $\Delta(E)D_2\cdots D_{n-1} <0$.
We will deduce that $E$ is not strongly semistable. 
We would like to apply Theorem \nameref{BI}, (iii) to the Frobenius pullback $(F^l)^*E$ of $E$.  Since $\Delta((F^l)^*E)=p^{2l}\Delta(E)$, we get that
\begin{multline*}
D_1^2 D_2 \cdots D_{n-1} \cdot \Delta((F^l)^*E)D_2\cdots D_{n-1} + \beta_r =  \\ 
D_1^2 D_2 \cdots D_{n-1}  \cdot p^{2l} \cdot \Delta(E)D_2\cdots D_{n-1} + \beta_r    
\end{multline*}
 is negative when $l$ is large enough. By the assumption of Theorem \nameref{BI}, (iii) we can find a saturated subsheaf $E'$ of $(F^l)^*E$ with $\xi_{E', (F^l)^*E}\in K^+$.
By the definition of $K^+$ as in Definition \ref{DefOfCone}, we have 
\[
\xi_{E', (F^l)^*E}D_1\cdots D_{n-1} >0,
\]
in particular, the pullback $(F^l)^*E$ is not semistable.
Hence the torsion-free sheaf $E$ itself is not strongly semistable.
\end{proof}


\section{Restriction to hypersurfaces and Bogomolov's inequality}
\label{sec Res}
In this section, we prove one of the main technical ingredients Theorem \ref{Res}, which controls the change of various numerical invariants as one passes to hypersurfaces. Following this, we complete the induction schema sketched in the introduction. We follow the proof as in \cite[$\S$3.5, $\S$3.9]{langer} and \cite{langer-erratum}.
Combining the section with the equivalent statements in Theorem \nameref{BI}, we finish the proof of all theorems in \cite[Section 3]{langer}.

Fix a positive integer $r$, and let $X$ be a smooth projective variety over $k$.
Consider the following statement.
\begin{theorem}[Res($r$)]\label{Res}
Let $E$ be a torsion-free sheaf of rank $r$ over $X$, and let $(D_1,\ldots, D_{n-1})$ be a collection of nef divisors over $X$ with $d:=D_1^2D_2\cdots D_{n-1}\geq 0$.

Assume that $D_1$ is very ample and the restriction of $E$ to a very general divisor $D \in |D_1|$ is not semistable (with respect to $(D_2|_D,...,D_{n-1}|_D)$). Let $\mu_i$ ($r_i$) denote the slopes (ranks) of the Harder-Narasimhan filtration of $E|_D$. 
Then $$\sum_{i<j} r_i r_j (\mu_i - \mu_j)^2 \leq d \Delta(E) D_2...D_{n-1} + 2r^2(L_{max} - \mu)(\mu-L_{min}).$$
\end{theorem}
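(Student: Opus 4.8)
The plan is to reduce the statement to Bogomolov's inequality $BI(r)$ applied on the hypersurface $D$, combined with a comparison of intersection numbers on $X$ and on $D$. First I would choose a very general $D \in |D_1|$; by the theorem of Flenner (or the Mehta--Ramanathan argument, in the form we may assume available) and the genericity hypothesis, $E|_D$ is torsion-free, and its Harder--Narasimhan filtration $0 = G_0 \subset G_1 \subset \cdots \subset G_s = E|_D$ with factors $Q_i = G_i/G_{i-1}$ of rank $r_i$ and slope $\mu_i$ (with respect to $(D_2|_D,\ldots,D_{n-1}|_D)$) is well-defined and nontrivial. Each factor $Q_i$ is semistable on $D$. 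The key identity is the degeneration formula for the discriminant along an exact sequence (Lemma \ref{DiscriminantSES}, applied iteratively), which on $D$ gives
\[
\frac{\Delta(E|_D)D_2\cdots D_{n-1}}{r} = \sum_i \frac{\Delta(Q_i)D_2\cdots D_{n-1}}{r_i} - \frac{1}{r}\sum_{i<j} r_i r_j\,\xi_{ij}^2 D_2\cdots D_{n-1},
\]
where $\xi_{ij} = c_1(Q_i)/r_i - c_1(Q_j)/r_j$; this mirrors exactly the computation in the proof of Lemma \ref{T5(r) to T3(r) lem}. Then I would apply the Hodge index theorem on $D$ to bound $\xi_{ij}^2 D_2\cdots D_{n-1}$ from above by $(\mu_i - \mu_j)^2/(D_1 D_2\cdots D_{n-1}|_D)$, i.e.\ by $(\mu_i-\mu_j)^2/d$ (here $d = D_1^2 D_2\cdots D_{n-1} = D_1 D_2 \cdots D_{n-1}|_D$), provided $d > 0$; the degenerate case $d = 0$ needs to be treated separately by an approximation/continuity argument adding $tH$.

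Next I would invoke $BI(r)$ (in the form of statement (ii) of Theorem \nameref{BI}, applied on $D$, which has dimension $n-1$) to each semistable factor $Q_i$ of rank $r_i \le r$: this yields $\Delta(Q_i)D_2\cdots D_{n-1} \ge -\beta_{r_i}/d'$ up to the characteristic-$p$ correction term, or more robustly I would use the refined inequality from Remark \ref{T5(r) to T3(r) rmk} and Lemma \ref{T5(r) to T3(r) lem} relating $\Delta(Q_i)$, $L_{\max}$, $L_{\min}$ and $\mu$ of $Q_i$, since the factors are only semistable, not strongly semistable. Combining, and using that $L_{\max}(Q_i) - \mu(Q_i)$ and $\mu(Q_i) - L_{\min}(Q_i)$ are controlled by $\alpha$, one gets $\sum_i \Delta(Q_i)D_2\cdots D_{n-1}/r_i \ge -(\text{error})$. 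Substituting back into the degeneration formula and multiplying through by $d$ produces
\[
\sum_{i<j} r_i r_j (\mu_i-\mu_j)^2 \le d\,\Delta(E|_D)D_2\cdots D_{n-1} + (\text{error}).
\]

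Finally I would relate the invariants of $E|_D$ to those of $E$ on $X$. One has $\Delta(E|_D)D_2\cdots D_{n-1} = \Delta(E)\cdot D_1 D_2\cdots D_{n-1}$ by the projection/restriction formula for Chern classes, so $d\,\Delta(E|_D)D_2\cdots D_{n-1} = d\,\Delta(E)D_2\cdots D_{n-1}$ matches the first term on the right of the claimed inequality. The error terms must be absorbed into $2r^2(L_{\max}-\mu)(\mu - L_{\min})$: here $L_{\max}$, $L_{\min}$, $\mu$ are the invariants of $E$ on $X$, and the point is that restriction to $D$ cannot increase $L_{\max}$ by more than a controlled amount nor decrease $L_{\min}$, so that $\alpha(E|_D)$ and the spread of the $\mu_i$ are dominated by the corresponding quantities for $E$ plus the $\beta_r$-type terms, which themselves fit inside $2r^2(L_{\max}-\mu)(\mu-L_{\min})$ after the algebraic manipulations. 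I expect the main obstacle to be precisely this bookkeeping: carefully tracking how $\mu_{\max}$, $\mu_{\min}$, $L_{\max}$, $L_{\min}$ and the characteristic-$p$ error terms behave under restriction to $D$ and showing the inequality $\sum r_i r_j(\mu_i-\mu_j)^2 \le r^2(\mu_1-\mu)(\mu-\mu_s)$ (the elementary inequality \eqref{EquationElementaryInequality}) combines correctly with the two factors of $2$ and the $\beta_r$ contributions; the degenerate case $d=0$ and the reduction ensuring $E|_D$ is genuinely torsion-free with the expected HN filtration are secondary technical points handled by the approximation lemmas of Subsection \ref{subsec approx}.
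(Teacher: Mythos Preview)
Your approach has a genuine gap, and it is architectural rather than technical. Working directly on a single hypersurface $D$, you obtain bounds that involve the invariants $L_{\max}(Q_i)$, $L_{\min}(Q_i)$, $\alpha(Q_i)$ of the HN factors \emph{on $D$}, and you then propose to absorb these into $2r^2(L_{\max}(E)-\mu)(\mu-L_{\min}(E))$ by arguing that ``restriction to $D$ cannot increase $L_{\max}$ by more than a controlled amount.'' But that statement is essentially Corollary~\ref{RestrictionLEstimate}, which is a \emph{consequence} of \nameref{Res}---so the argument is circular. There is also a dimension mismatch: on $D$ (which has dimension $n-1$) the expression $\Delta(E|_D)\,D_2\cdots D_{n-1}$ pairs a codimension-$2$ class with $n-2$ divisors, one too many; the honest computation on $D$ would produce a quantity like $\Delta(E)\,D_1 D_3\cdots D_{n-1}$, not the $\Delta(E)\,D_2\cdots D_{n-1}$ in the statement.

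The paper's proof bypasses both problems by working not on $D$ but on the incidence variety $Z\subset |D_1|\times X$, and then on the blow-up $Y=\mathrm{Bl}_B X$ over a general pencil $\Lambda\subset|D_1|$. One takes the Harder--Narasimhan filtration $\{E_i\}$ of $q^*E$ on $Z$ with respect to $(p^*\mathcal{O}_\Pi(1)^{\dim\Pi},q^*D_2,\ldots,q^*D_{n-1})$; fibrewise this restricts to the HN filtration of $E|_D$, but now the $E_i$ live over all of $X$. The crucial step, absent from your outline, is that the pushforwards $(q|_Y)_*(E_i|_Y)$ are honest subsheaves of $E$ on $X$, so their $(D_1,\ldots,D_{n-1})$-slopes are at most $\mu_{\max}(E)$. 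Writing $c_1(F_i|_Y)=M_i+b_iN$ with $N$ the exceptional divisor, this translates into the numerical inequality $\sum_{j\le i} b_j\,d \ge \sum_{j\le i} r_j(\mu_j-\mu_{\max})$, and a chain of elementary manipulations then produces exactly the term $2r(\mu-\mu_{\max})(\mu-\mu_{\min})$, which one finally relaxes to $2r(L_{\max}-\mu)(\mu-L_{\min})$. Bogomolov's inequality enters only to discard the $\Delta(F_i)$ terms (applied on $Z$, to factors of rank $<r$, with no error term); the link to $\mu_{\max}(E),\mu_{\min}(E)$ comes entirely from the pushforward argument, not from any control of $\alpha$ under restriction.
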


Our goal this section is to show the following two implications
\[
Res(r) + BI(r-1) \Longrightarrow BI(r) \Longrightarrow Res(r+1).
\]
Note that since both Theorem \nameref{Res} and Theorem \nameref{BI} are empty and thus automatically true for $r=1$, the above induction process proves that both Res(r) and BI(r) hold true for all $r\geq 1$.


\subsection{$Res(r)$ $+$ $BI(r-1) \Rightarrow$ $BI(r)$}
In this subsection, we consider the implication $Res(r) + BI(r-1) \Rightarrow BI(r)$, following \cite[$\S$3.5]{langer} and \cite{langer-erratum}. This is achieved via an inductive argument on the dimension of $X$. We first prove the case when $X$ is a surface. Further, we do not need to assume BI($r-1$) for the base case.
\begin{proposition}\label{implication for surface}
Let $X$ be a surface. 
Assume Theorem \nameref{Res} holds for all torsion-free sheaves of rank $\leq r$, then Theorem \nameref{BI} holds for all torsion-free sheaves of rank $\leq r$ and all ample divisor $D_1$.
\end{proposition}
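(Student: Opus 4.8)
The plan is to prove statement (iv) of Theorem \nameref{BI} directly for surfaces, i.e., that a strongly semistable torsion-free sheaf $E$ of rank $r' \le r$ on the surface $X$ satisfies $\Delta(E) \ge 0$; since for surfaces there are no remaining divisors $D_2,\dots,D_{n-1}$, the quantity $\Delta(E)D_2\cdots D_{n-1}$ is just the integer $\Delta(E)$, and the degenerate factor $d = D_1^2$ is positive because $D_1$ is ample. Because the four statements of Theorem \nameref{BI} are equivalent, proving (iv) suffices; alternatively, one can prove (i) (the ample case) which on a surface is literally (iv). I will argue by contradiction: suppose $\Delta(E) < 0$ for some strongly semistable $E$.

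The first step is a reduction to the unstable-restriction situation so that Res($r'$) applies. Replace $E$ by a Frobenius pullback $(F^l)^*E$: since $\Delta((F^l)^*E) = p^{2l}\Delta(E)$ stays negative and strong semistability is preserved, I may assume $\Delta(E)$ is as negative as I like. Now restrict $E$ to a very general curve $D \in |mD_1|$ for $m \gg 0$ (replacing $D_1$ by a very ample multiple, which only rescales $d$ and the $\mu_i$ and does not affect the sign assertions). If $E|_D$ were semistable for very general $D$, then by a standard argument $E$ would be semistable on $X$ with $\mu_{\max}(E) = \mu(E) = \mu_{\min}(E)$; but then I can apply Res to a slightly different situation, or more directly: I want to force instability on $D$. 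Here is where I invoke the hypothesis: if $E|_D$ is semistable for very general $D$, Res($r'$) is vacuous, so instead I run the following dichotomy. Either $E|_D$ is not semistable for very general $D$ — then Res($r'$) gives
\[
\sum_{i<j} r_i r_j(\mu_i - \mu_j)^2 \le d\,\Delta(E) + 2(r')^2(L_{\max} - \mu)(\mu - L_{\min}),
\]
and since $E$ is strongly semistable the correction term $2(r')^2(L_{\max}-\mu)(\mu-L_{\min})$ vanishes (as $L_{\max} = \mu = L_{\min}$), forcing $d\,\Delta(E) \ge \sum_{i<j} r_ir_j(\mu_i-\mu_j)^2 \ge 0$, contradicting $\Delta(E) < 0$ together with $d > 0$. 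Or $E|_D$ is semistable for very general $D$ — but a semistable bundle restricted from a surface to a very general high-degree curve controls the discriminant via the Bogomolov-type inequality on the curve side, or one uses that restriction to a very general curve in a base-point-free pencil preserves enough of the Harder–Narasimhan data; I would handle this case by the classical argument (Bogomolov's original method on surfaces, or by noting that $E$ semistable on $X$ with $E|_D$ semistable lets one bound $h^0$ and hence $\Delta$ directly, cf.\ the restriction theorems referenced from \cite{huybrechts}).

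The main obstacle is precisely this second branch: ensuring that the restriction $E|_D$ to a very general curve is actually \emph{un}stable so that Res($r'$) has content, or else disposing of the stable-restriction case by independent means. The cleanest route, which I would pursue, is to not aim for the contrapositive on $E$ but to apply Res to an auxiliary destabilizing configuration: if $\Delta(E) < 0$ then $E$ (or rather some symmetric/tensor construction, or $E$ itself after Frobenius twisting to make $\Delta$ very negative) must have unstable general restriction — this is itself a consequence of a weak Grauert–Mülich / Bogomolov-inequality-on-the-curve argument — and then the displayed inequality of Res($r'$), with vanishing correction term by strong semistability, yields $0 \le d\,\Delta(E) < 0$. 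I would therefore structure the proof as: (1) assume $\Delta(E)<0$; (2) pass to a Frobenius pullback to make $d\,\Delta(E) + (\text{bounded correction}) < 0$, noting the correction is in fact zero here; (3) show the general curve restriction is unstable (the one genuinely nontrivial input, argued via the surface Riemann–Roch / Hodge index estimate as in \cite{langer}, $\S3.5$); (4) apply Res($r'$) and read off the contradiction $\sum_{i<j} r_ir_j(\mu_i-\mu_j)^2 \le d\,\Delta(E) < 0$.
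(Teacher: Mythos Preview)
Your dichotomy is set up correctly, and you correctly observe that if $E|_D$ is \emph{not} semistable then Res($r'$) together with $L_{\max}=\mu=L_{\min}$ gives $\sum r_ir_j(\mu_i-\mu_j)^2 \le d\,\Delta(E) < 0$, an immediate contradiction. But draw the right conclusion from this: under the standing assumption $\Delta(E)<0$, Res($r'$) \emph{forces} the general restriction $E|_D$ (and each $(F^k)^*E|_D$) to be semistable. Your final outline has the logic reversed---you propose to ``show the general curve restriction is unstable'' as the key nontrivial step, and then feed that into Res. That cannot work: Res is precisely the reason the restriction is \emph{stable}, not unstable, and any independent argument of the form ``$\Delta(E)<0 \Rightarrow$ restriction unstable'' would amount to assuming Bogomolov's inequality on the surface, which is what you are trying to prove.

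The genuine gap is therefore the branch where $E|_C$ is strongly semistable on a very general curve $C$ and yet $\Delta(E)<0$. The paper disposes of this by Bogomolov's symmetric-power argument, which you allude to only by name: one passes to the locally free reflexive hull, notes that $S^{kr}E|_C$ remains strongly semistable on $C$, and uses this (via the restriction sequence and the vanishing $\mathrm{Hom}(\mathcal{O}_X(kc_1(E)+C),S^{kr}E)=0$ from slope comparison) to bound $h^0$ and $h^2$ of $S^{kr}E(-kc_1(E))$ by $O(k^r)$. Hirzebruch--Riemann--Roch then gives $\chi(S^{kr}E(-kc_1(E))) = -\dfrac{r^r\Delta(E)}{2(r+1)!}k^{r+1}+O(k^r)$, which for $\Delta(E)<0$ grows like $k^{r+1}$, contradicting the $O(k^r)$ bound on $h^0+h^2$. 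This asymptotic growth comparison is the actual content of the surface case and is entirely absent from your proposal; neither Frobenius twisting (the correction term is already zero) nor passing to high multiples of $D_1$ substitutes for it.
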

\begin{proof}
By the equivalence of statements in Theorem \nameref{BI}, it suffices to show the assumption implies the statement in Theorem \nameref{BI}, (i), i.e. the inequality $\Delta(E)\geq 0$, for a strongly $D_1$-semistable torsion-free sheaf $E$ of rank $\leq r$, where $D_1$ is ample.

    Let $E$ be a rank $r$ vector bundle that is strongly $D_1$-semistable. Suppose by way of contradiction that $\Delta(E) < 0$. We may further assume that $E$ is locally free, by the inequality $$\Delta(E^{**}) = \Delta(E) - 2 r \operatorname{length}(E^{**}/E),$$ where the reflection $E^{**}$ is locally free on $X$. We first claim that under the assumption, the restriction to a general curve $C \in |D_1|$ is also strongly semistable. 
    To see this, since $E$ is strongly semistable, $\mu = L_{max} = L_{\min}$, and since (by assumption) $\Delta(E) < 0$, from the assumption we would get a contradiction $$0 \leq  \sum_{i<j} r_i r_j (\mu_i - \mu_j)^2 <0,$$ where we use the notation in \nameref{Res}. Similarly, as $(F^k)^*E$ is semistable, its restriction to a general curve $C$ is also semistable. Thus, the restriction of $E$ to a very general curve $C$ is strongly semistable. For the remainder of the proof, fix such a very general $C$.
    
    Now consider the symmetric power $S^{kr}E|_C$. 
    The strategy is to compute $\chi(S^{kr}E)$ in two different ways and use them to deduce a contradiction. 
    Since symmetric powers of strongly semistable sheaves on curves are strongly semistable (see \cite[Corollary 3.2.10]{huybrechts} for the characteristic zero case and \cite[Sections 3 and 5]{miyaoka} for the positive characteristic case), it follows that $S^{kr}E|_C$ is strongly semistable. Consider the short exact sequence arising from $C$: $$0 \to S^{kr} E(-k c_1(E)-C) \to S^{kr}E(-kc_1(E)) \to S^{kr}E(-k c_1(E))|_C \to 0.$$ This allows us to estimate $$h^0(S^{kr}E(-kc_1(E)) \leq h^0(S^{kr} E(-k c_1(E)-C)) + h^0(S^{kr}E(-k c_1(E))|_C).$$ Consider the first term on the right hand side. This quantity is equal to $\dim\left( \operatorname{Hom}(\mathcal{O}_X(kc_1(E) + C),S^{kr}E)\right)$, and we claim that this number is zero. 
    Note that the slope of $\mathcal{O}_X(kc_1(E) + C)$ is given by $k r \mu(E) + D_1^2$, and by the splitting principle,  the slope of the symmetric power can be seen to be $$ \mu(S^{kr}(E)) = \frac{\binom{r+kr-1}{kr} k c_1(E) D_1}{ \binom{r+kr-1}{kr}} = kr \mu(E).$$ 
    Since $D_1$ is ample, $D_1^2>0$, so the claim follows as there are no morphisms from semistable sheaves of higher slope to those of lower slope. 
    Thus we get $h^0(S^{kr}E(-kc_1(E)) \leq h^0(S^{kr}E(-k c_1(E))|_C)$. 
    
    Given any semistable vector bundle $G$ on a curve, we have the further estimate $h^0(G) \leq \max \{0,\operatorname{deg}G + \operatorname{rk}G\}$. 
    Using the sequence
    \[
    0 \longrightarrow G(-lP) \longrightarrow G \longrightarrow G|_{lP} \longrightarrow 0,
    \]
    the above inequality can be seen from the estimation $h^0(G) \leq  h^0(G \otimes \mathcal{O}_{C}(-lP)) + \mathrm{rk} (G) l = 0 + \mathrm{rk} (G) l \leq \max \{0, \mathrm{rk}G+\mathrm{deg}G\}$, where $P$ is a point on $C$, $l = \max \{0,\lceil\mu(G)\rceil \}$, and $h^0(G \otimes \mathcal{O}_{C}(-lP)) = 0$ as $\mu(\mathcal{O}_C(lP)) > \mu(G)$.
    
    Thus, we have $h^0(S^{kr}E(-kc_1(E)) = O(k^r)$. By Serre duality, we have the same order of magnitude estimate for $h^2(S^{kr}E(-kc_1(E))$. On the other hand, the splitting principle for Chern classes and Hirzebruch-Riemann-Roch theorem can be used to get the following estimate (for example, see \cite[Section 10]{bogomolov})  $$\chi(X,S^{kr}E(-kc_1(E)) = - \frac{r^r \Delta(E)}{2(r+1)!}k^{r+1} + O(k^r).$$ Since $\Delta(E)<0$, this polynomial is eventually positive and is of order $k^{r+1}$, but this is an obvious contradiction as $h^0$ and $h^2$ are of order $k^r$. 
\end{proof}

To show the implication for general $X$, we induct on the dimension of $X$, using the above surface case as the base case. 
However, different from the assumption in Proposition \ref{implication for surface}, for this to work we require the additional assumption of $BI(r-1)$ (i.e. Theorem $BI(r-1)$).
Precisely, we want to show that assuming 
\begin{itemize}
    \item $BI(r-1)$ and $Res(r)$ for $X$ of any dimension;
    \item $BI(r)$ for varieties of dimension $<n$,
\end{itemize}
then we have $BI(r)$ holds for any variety $X$ of dimension $n$.

%
\begin{proof}[Proof of Theorem $BI(r)$]
Similar to the proposition above, we aim to prove Theorem \nameref{BI}, (i) for $X$ of dimension $n$, namely the inequality $\Delta(E)D_2\cdots D_{n-1} \geq 0$ for a strongly $(D_1,\ldots,D_{n-1})$-semistable torsion-free sheaf $E$ over $X$, where $D_i$ are all ample.

Assume to the contrary that $\Delta(E)D_2\dots D_{n-1} < 0$. 
For a general divisor in $|D_2|$ (which we also denote as $D_2$), we have $\Delta(E|_{D_2})D_3\dots D_{n-1} = \Delta(E)D_2\dots D_{n-1} < 0$. 
Consider the polarization $(D_2^2, D_3,\ldots, D_{n-1}):=(D_2,D_2,D_3,\ldots,D_{n-1})$.
We first assume that $E$ is strongly $(D^2_2, D_3,  \dots D_{n-1})$-semistable. 
Applying $Res(r)$ to the Frobenius twists $(F^k)^*E$ with respect to $(D_2^2,\ldots, D_{n-1})$, then if $(F^k)^*E|_{D_2}$ is not semistable we would get the inequality
\[
\sum_{i<j} r_ir_j (\mu_i-\mu_j)^2 \leq dp^k\Delta(E)D_2\cdots D_{n-1} + 0 <0,
\]
which is impossible.
Thus $E|_{D_2}$ is strongly semistable for ample divisors $(D_2,\ldots D_{n-1})$, and by the induction hypothesis, since $E|_{D_2}$ is defined over a hypersurface of $X$ we have $\Delta(E|_{D_2})D_3\dots D_{n-1} = \Delta(E)D_2\dots D_{n-1} \geq 0$, a contradiction.

For the rest, we assume $E$ is not strongly $(D^2_2,D_3,\ldots, D_{n-1})$-semistable.
The trick is to interpolate between the polarizations $(D_1, D_2, \dots, D_{n-1})$ and $(D_2^2,D_3, \dots, D_{n-1})$. 
Let $B_t$ be the product $((1-t)D_1 + tD_2)D_2\dots D_{n-1}.$ 
By assumption, $E$ is strongly $B_0$-semistable while it is not strongly $B_1$-semistable. 
Since not being strongly semistable is an open condition with respect to $t$, there exists a number $t_k\in [0,1)$ such that $E$ is strongly $B_{t_k}$-semistable but not strongly $B_{t}$-semistable for  all $t_k < t \leq 1$. 

Let $k$ be a positive integer such that $(F^k)^*E$ is not semistable for $B_1$.
Using Theorem \ref{approx}, we see that the Harder-Narasimhan filtration of $E$ with respect to $B_t$ remains constant for all $t_k < t < t_k + \epsilon$ for some $\epsilon > 0$ small enough. 
We let $E'$ denote the maximal destabilizing sheaf of $(F^k)^*E$ with respect to the polarization $B_{t}$ for $t_k < t < t_k + \epsilon$. 
By the continuity of the slope with respect to $t$, it follows that $E'$ and $(F^k)^*E$ have the same $B_{t_k}$-slope (see Lemma \ref{semi approx}).
Since $E'$ is a subsheaf of a semistable sheaf with the same slope, $E'$ is also semistable with respect to $B_{t_k}$. 
It also follows that the quotient $E'' = (F^k)^*E/E'$ is $B_{t_k}$-semistable as well. 
Let $r$, $r'$ and $r''$ denote the ranks of $E$, $E'$ and $E''$ respectively. 

Now we apply Lemma \ref{DiscriminantSES} to the short exact sequence $E' \to E \to E''$ to get that
\begin{multline*}
  \frac{\Delta((F^k)^*E)D_2\dots D_{n-1}}{r} = \frac{\Delta(E')D_2\dots D_{n-1}}{r'} + \frac{\Delta(E'')D_2\dots D_{n-1}}{r''} - \\ \frac{r'r''}{r}\xi_{E',E''}^2D_2\dots D_{n-1}.  
\end{multline*}
Note that by the Hodge index theorem, we have 
$$\xi_{E',E''}^2D_2\dots D_{n-1} \cdot (t_kD_1 + (1-t_k)D_2)^2 D_2\dots D_{n-1} \leq (\xi_{E',E''}B_{t_k})^2.$$
Since $E',E''$ have the same $B_{t_k}$-slope, $(\xi_{E',E''}B_{t_k})^2 = 0$ and by ampleness of $D_1,\dots,D_{n-1}$, we have that 
$$d(t_k) := (t_kD_1 + (1-t_k)D_2)^2 D_2\dots D_{n-1} > 0.$$

Moreover by Theorem \nameref{BI}, (ii) but for $(r-1)$ in place of $r$, we get that the inequalities as below
\[
\frac{\Delta((F^k)^*E)D_2\dots D_{n-1}}{r} \geq -\frac{1}{d(t_k)}\left(\frac{\beta_{r'}(t_k)}{r'} + \frac{\beta_{r''}(t_k)}{r''}\right) 
\geq - \frac{\beta_r(t_k)}{rd(t_k)}, 
\]
where we denote $\beta_r(t)$ to be $\beta_r((1-t)D_1,+tD_2,D_2,\ldots, D_{n-1})$.
In this way, we get
\[
\Delta(E)D_2\cdots D_{n-1} \geq -\frac{\beta_r(t_k)}{d(t_k)p^{2k}}.
\]
Notice that the function $\frac{\beta_r(t)}{d(t)}$ for $t\in [0,1]$ is defined and continuous, thus bounded.
Taking the limit as $k$ approaches infinity, we get the inequality
\[
\Delta(E)D_2\cdots D_{n-1} \geq 0,
\]
which is a contradiction, so we are done.
\end{proof}
\begin{remark}
Pointed out by the referee, the last paragraph of the proof above can be argued as follows.
As $E'$ and $E''$ are of the same $B_{t_k}$-slopes as the strongly $B_{t_k}$-semistable sheaf $E$, both of them are strongly $B_{t_k}$-semistable.
Thus applying the induction hypothesis of Theorem \nameref{BI}, (i) to $E'$, $E''$ for the polarization $B_{t_k}$, we see
\[
\frac{\Delta(E')D_2\dots D_{n-1}}{r'} + \frac{\Delta(E'')D_2\dots D_{n-1}}{r''} \geq 0.
\]
In this way, by the same vanishing of $\xi_{E',E''}D_2\cdots D_{n-1}$ proved above, we get the positivity of $\Delta(E)D_2\cdots D_{n-1}$.
\end{remark}

\subsection{$BI(r) \Rightarrow Res(r+1)$}
We then consider the implication $BI(r) \Rightarrow Res(r+1)$, following \cite[$\S$3.9]{langer}.

Let $r$ be a non-negative integer, and $(D_1,\ldots, D_{n-1})$ be a nef polarization over $X$ with $d=D_1^2D_2\cdots D_{n-1}\geq 0$.

\begin{proof}
	Let $\Pi$ be the projective space associated to the linear system $|D_1|$.
	Let $Z\subset \Pi\times X$ be the incident scheme, defined as the locus $\{(D,x)\in \Pi \times X~|~x\in D\}$, where $p:Z \rightarrow \Pi$ and $q:Z \rightarrow X$ are the two natural maps induced by the projections.
	For each $s\in \Pi$, we let $Z_s$ be its fiber along the map $q$, which is a divisor in $X$.
	Let $0= E_0\subset E_1 \subset \cdots \subset E_m=q^*E$ be the \HNF of $q^*E$ associated to the polarizaion 
	\begin{multline*}
	(p^*\mathcal{O}_\Pi(1)^{\dim \Pi}, q^*D_2,\ldots, q^* D_{n-1})= \\ (p^*\mathcal{O}_\Pi(1),\ldots, p^*\mathcal{O}_\Pi(1), q^*D_2,\ldots, q^* D_{n-1})
	\end{multline*} 
	over $Z$.
	Denote $F_i$ to be the subquotient $E_i/E_{i-1}$.
	Here we observe that the pull back of the  filtration $\{E_i\}$ at a  general fiber $Z_s$ for $s\in \Pi$ is the  \HNF of $E|_{Z_s}$ at the hypersurface $Z_s\in |D|$.
	Indeed, the slope of $q^*E$ with respect to $(p^*\mathcal{O}_\Pi(1)^{\dim \Pi}, q^*D_2,\ldots, q^* D_{n-1})$ is the same as the slope of $q^*E|_{Z_s}$. This shows that the filtration $\{E_i\}$ is also the relative Harder-Narasimhan filtraion for $q^*E$ with respect to the map $p$.
	In particular, we have the equalities $r_i=\mathrm{rk} F_i$ and $\mu_i=\mu(F_i)$, where the latter is with respect to the polarization given by $(p^*\mathcal{O}_\Pi(1)^{\dim \Pi}, q^*D_2,\ldots, q^* D_{n-1})$.
	
	Now let $\mathbb{P}^1\cong \Lambda\subset \Pi$ be a sufficiently general pencil, corresponding to a linear subsystem of $|D|$ in $X$ parametrized by a line $\mathbb{P}^1\subset \Pi$.
	Let $B\subset X$ be the base of $\Lambda$, which is the intersection of $Z_s$ for any two (hence all) $s\in \Lambda$, and is of codimension $2$ in $X$.
	Denote by $Y$ the incident scheme for $\Lambda$, which is equal to the closed subscheme $p^{-1}\Lambda$ inside of $Z$.
	Here by the construction of the incident scheme for a pencil, the projection map $Y=p^{-1}\Lambda\rightarrow X$ coincides with the blowup of $X$ at the base locus $B$.
	Moreover, depending on the dimension of $X$, we can write the first Chern class of the restriction $F_i|_Y$ in terms of the following:
	\begin{itemize}
		\item If $\dim(X)=2$, then $B$ consists of a union of a finite amount of points, and we may write each $c_1(F_i|_Y)$ as $M_i + \sum_j b_{ij} N_j$, with $M_i\in \mathrm{Pic}(X)$ and $N_j$ being the $j$-th exceptional divisor for $1\leq j\leq l$.
		We define $b_i$ to be the number $\frac{\sum_j b_{ij}}{l}$.
		\item If $\dim(X)\geq 3$, then  by Bertini's theorem the blowup center $B$ is a smooth connected closed subscheme of codimension two in $X$, and we may write each $c_1(F_i|_Y)$ as $M_i+b_i N$, for $M_i \in \mathrm{Pic}(X)$ and $N$ being the exceptional divisor.
	\end{itemize}
In any of the above cases, with respect to the polarization $$(p^*\mathcal{O}_\Lambda(1),q^*D_2,\ldots, q^*D_{n-1})$$ over $Y$, we can write the slope $\mu_i$ as the following
\[
\mu_i=\frac{M_iD_1\cdots D_{n-1} + b_id}{r_i}.\tag{0}
\]
Here as the collection of torsion-free sheaves $\{F_i\}$ forms the graded pieces of a filtration of $q^*E$, we have 
\[
\sum_i b_i=c_1(q^*E)p^*\mathcal{O}_\Lambda(1)q^*D_2\cdots q^*D_{n-1}=0. \tag{1}
\]
Furthermore, since $(q|_Y)_*(E_i|_Y) \subset E$, we have the inequalities
\[
\frac{\sum_{j\leq i} M_jD_1\cdots D_{n-1}}{\sum_{j \leq i} r_j} \leq \mu_{\max}, \text{ for all } i,
\]
which is equivalent to the following inequalities
\[
\sum_{j\leq i} b_jd \geq \sum_{j\leq i} r_j(\mu_j-\mu_{\max}).\tag{2}
\]

Finally, we are ready to prove the implication.
As each $F_i$ is of rank $\leq r-1$, by Theorem \nameref{BI}, (i)  for the polarization $(p^*\mathcal{O}_\Pi(1)^{\dim \Pi}, q^*D_2,\ldots, q^* D_{n-1})$ over $Z$, we have
\begin{multline*}
\Delta(F_i)p^*\mathcal{O}_\Pi(1)^{\dim \Pi}q^*D_2\cdots q^* D_{n-1} = \\ \Delta(F_i|_Y)p^*\mathcal{O}_\Lambda(1)q^*D_2\cdots q^*D_{n-1} \geq 0,\text{ for all } i.
\end{multline*}
Applying this, we get
\begin{align*}
	& \frac{d\Delta(E)D_2\cdots D_{n-1}}{r} \\
	& =\sum_i \frac{d\Delta(F_i|_Y)p^*\mathcal{O}_\Lambda(1)q^*D_2\cdots q^*D_{n-1}}{r_i} \\ 
	& - \frac{d}{r}\sum_{i<j}r_ir_j\left(\frac{c_1(F_i|_Y)}{r_i} - \frac{c_1(F_j|_Y)}{r_j}\right)^2 (q|_Y)^*(D_2\cdots D_{n-1}) \\
	&\geq - \frac{d}{r}\sum_{i<j}r_ir_j\left(\frac{c_1(F_i|_Y)}{r_i} - \frac{c_1(F_j|_Y)}{r_j}\right)^2 (q|_Y)^*(D_2\cdots D_{n-1})\\
	&= \frac{d}{r} \sum_{i<j} r_ir_j\left( d(\frac{b_i}{r_i} -\frac{b_j}{r_j})^2 - (\frac{M_i}{r_i} - \frac{M_j}{r_j})^2D_2\cdots D_{n-1} \right) \\
	&\geq \frac{1}{r}\sum_{i<j} r_ir_j \left( d^2 (\frac{b_i}{r_i} - \frac{b_j}{r_j})^2 - (\frac{M_iD_1\ldots D_{n-1}}{r_i} - \frac{M_jD_1\cdots D_{n-1}}{r_j})^2 \right),
\end{align*}
where the first inequality is the application of the statement (i) as above, the equality after is a rearrangement using the formula (0), and the last inequality follows from the Hodge index theorem applied to $\frac{M_i}{r_i}-\frac{M_j}{r_j}$ and the polarization $D_1, \cdots, D_{n-1}$.
Moreover, using the formula (0), we may rewrite the last expression of the inequalities above as 
\[
\frac{2d}{r}\sum_{i<j}(\mu_i-\mu_j)(b_ir_j-b_jr_i) - \frac{1}{r}\sum_{i<j} r_ir_j(\mu_i-\mu_j)^2
\]
Using (1), one further simplifies the above to 
\[
2\sum_i db_i\mu_i - \frac{1}{r}\sum_{i<j} r_ir_j(\mu_i - \mu_j)^2.
\]

Here more concretely, the simplification above can be seen via an induction on $m$. If $m=1$, this is obvious. Suppose this is true for $m-1$. Then
\begin{align*}
    &\frac{2d}{r}\sum_{i<j}(\mu_i-\mu_j)(b_ir_j-b_jr_i)\\
    &= 2\sum_{i<j\leq m-1} (\mu_i-\mu_j)((b_i+\frac{b_m}{m})r_j-(b_j+\frac{b_m}{m})r_i)\\
    &+\frac{b_m}{m}\sum_{i<j\leq m-1}(\mu_i-\mu_j)(r_i-r_j)
    + \frac{2d}{r}\sum_{i\leq m-1}(\mu_i-\mu_m)(b_ir_m-b_mr_i)\\
    &=\frac{2(r-r_m)}{r}\sum_{i\leq m-1} d(b_i+\frac{b_m}{m})\mu_i \\
    &+ \frac{2d}{r}\left(r_m\sum_{i\leq m-1} b_i\mu_i-b_m(\mu r - \mu_mr_m)+b_m\mu_mr_m+b_m\mu_m(r-r_m) \right)\\
    &+\frac{b_m}{m}\sum_{i<j\leq m-1}(\mu_i-\mu_j)(r_i-r_j)\\
    &= 2\sum_i db_i\mu_i
\end{align*}
where the second equality follows from induction hypothesis.
Furthermore, using the formula (2) together with an elementary equality 
\[
\sum_i a_ib_i = \sum_i (\sum_{j\leq i}a_j)(b_i - b_{i+1}),
\] we get
\begin{align*}
	\sum_i db_i\mu_i & = \sum_i (\sum_{j\leq i} db_j) (\mu_i - \mu_j) \\
	& \geq \sum_i \left( \sum_{j \leq i} r_j (\mu_j -\mu_{\max})(\mu_i -\mu_{i+1}) \right) \\
	& = \sum_i ( r_i\mu_i^2 - r\mu\mu_{\max}) \\
	& \geq \sum_i r_i\mu_i^2 -r\mu^2 + r(\mu -\mu_{\max})(\mu - \mu_{\min}) \\
	& = \sum_{i<j} \frac{r_ir_j}{r}(\mu_i - \mu_j)^2 + r(\mu - \mu_{\max})(\mu - \mu_{\min}).
\end{align*}
As a consequence, we get
\[
	\frac{d\Delta(E)D_2\cdots D_{n-1}}{r}  \geq \sum_{i<j} \frac{r_ir_j}{r}(\mu_i - \mu_j)^2 + 2r(\mu-\mu_{\max})(\mu- \mu_{\min}).
	\]
At last, by moving the second term in the right hand side above and the inequalities $L_{\max}\geq \mu_{\max}$, $\mu_{\min}\geq L_{\min}$, we get the one as in Theorem \nameref{Res}.
\end{proof}


\section{Boundedness of torsion-free sheaves}\label{Sec bound}
At last, we use the ingredients in the last two sections to show the boundedness, following \cite[Section 4]{langer}

We start with the following result combining the inequalities in the last two sections into a uniform one, following \cite[Corollary 3.11]{langer}.
\begin{corollary}
\label{RestrictionLEstimate}
Let $E$ be a torsion-free sheaf on $X$ and $D_1$ be very ample, $D_2,\dots,D_{n-1}$ be ample, and let $D \in |D_1|$ be a general divisor. Then, we have that 
$$
\frac{r}{2}(L_{\max}(E|_D)- L_{\min}(E|_D))^2 \leq d \Delta(E)D_2\dots D_{n-1} + 2r^2 (L_{\max} - \mu)(\mu - L_{\min})
$$
\end{corollary}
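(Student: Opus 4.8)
The plan is to deduce Corollary \ref{RestrictionLEstimate} from Theorem \nameref{Res} by relating the quantity $\sum_{i<j} r_i r_j (\mu_i - \mu_j)^2$ (attached to the Harder-Narasimhan filtration of $E|_D$) to the slope gap $\mu_{\max}(E|_D) - \mu_{\min}(E|_D)$, and then passing from $\mu_{\max}, \mu_{\min}$ to $L_{\max}, L_{\min}$ via Frobenius pullbacks. First I would dispose of the easy case: if $E|_D$ is semistable for a general $D$ (and this holds for its Frobenius twists too, so that $L_{\max}(E|_D) = L_{\min}(E|_D)$), then the left-hand side is zero, and the right-hand side is non-negative precisely by Theorem \nameref{BI}, (iv) applied to $E$ restricted to $D$ (or rather, to the relevant strongly semistable situation) combined with the trivial bound $(L_{\max} - \mu)(\mu - L_{\min}) \geq 0$. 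So the content is in the unstable case.

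In the unstable case, the key elementary inequality is the reverse of \eqref{EquationElementaryInequality}: for reals $\mu_1 > \cdots > \mu_m$ with multiplicities $r_i$ summing to $R := \sum r_i$ and weighted mean $\mu$, one has
\[
\sum_{i<j} r_i r_j (\mu_i - \mu_j)^2 \;\geq\; \frac{R}{2}\,(\mu_1 - \mu_m)^2,
\]
since the pair $(i,j) = (1,m)$ alone contributes $r_1 r_m (\mu_1 - \mu_m)^2$ and, more carefully, writing $\mu_1 - \mu_m$ as a telescoping sum and using $r_1 + r_m \le R$ gives the constant $R/2$ (this is the standard counterpart to the bound used in Lemma \ref{T5(r) to T3(r) lem}). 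Applying this to the HN data of $E|_D$, whose ranks sum to $r = \mathrm{rk}(E)$, Theorem \nameref{Res} immediately yields
\[
\frac{r}{2}\bigl(\mu_{\max}(E|_D) - \mu_{\min}(E|_D)\bigr)^2 \;\leq\; d\,\Delta(E)D_2\cdots D_{n-1} + 2r^2(L_{\max} - \mu)(\mu - L_{\min}).
\]
This is the asserted inequality but with $L_{\max}(E|_D), L_{\min}(E|_D)$ replaced by $\mu_{\max}(E|_D), \mu_{\min}(E|_D)$.

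To upgrade $\mu_{\max}(E|_D) - \mu_{\min}(E|_D)$ to $L_{\max}(E|_D) - L_{\min}(E|_D)$, I would replace $E$ by $(F^k)^* E$ throughout. For a general $D \in |D_1|$, one has $(F^k)^*E|_D = (F^k)^*(E|_D)$, and $\Delta((F^k)^*E) = p^{2k}\Delta(E)$, while $L_{\max}$, $L_{\min}$, $\mu$ all scale by $p^k$; crucially $L_{\max}((F^k)^*E) - \mu((F^k)^*E) = p^k(L_{\max}(E) - \mu(E))$, etc. Running the above with $(F^k)^*E$ in place of $E$, dividing by $p^{2k}$, and noting that the left-hand side becomes $\tfrac{r}{2}\bigl(\tfrac{\mu_{\max}((F^k)^*(E|_D))}{p^k} - \tfrac{\mu_{\min}((F^k)^*(E|_D))}{p^k}\bigr)^2$, the right-hand side stays fixed at $d\,\Delta(E)D_2\cdots D_{n-1} + 2r^2(L_{\max} - \mu)(\mu - L_{\min})$. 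Letting $k \to \infty$ and using that $\tfrac{\mu_{\max}((F^k)^*(E|_D))}{p^k} \to L_{\max}(E|_D)$ and $\tfrac{\mu_{\min}((F^k)^*(E|_D))}{p^k} \to L_{\min}(E|_D)$ (the defining limits, monotone and finite by Proposition \ref{bound of alpha E}), the continuity of $x \mapsto \tfrac{r}{2}x^2$ gives the claim.

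The main obstacle I anticipate is bookkeeping around "general $D$": Theorem \nameref{Res} needs the restriction $(F^k)^*E|_D$ to be non-semistable to produce a nontrivial HN filtration, but the locus of good $D$ may shrink as $k$ grows, and one must also handle the case where $(F^k)^*E|_D$ happens to be semistable for general $D$ at some (or all) stages $k$ — there the HN sum is $0 \le$ RHS and one still needs the limit of $\tfrac{\mu_{\max}-\mu_{\min}}{p^k}$ to behave. A clean way around this is to note that the set of $D$ for which the conclusion of Theorem \nameref{Res} (interpreted with the convention that the left side is $0$ in the semistable case) holds is, for each $k$, a dense open, and one argues the inequality for a $D$ general enough to lie in the relevant intersection, or simply observes the inequality is closed in $E|_D$ under the limit; I would follow Langer and just take $D$ "very general" so that all finitely-many relevant conditions hold simultaneously.
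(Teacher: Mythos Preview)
Your overall strategy matches the paper's: combine Theorem~\nameref{Res} with the elementary lower bound $\sum_{i<j} r_ir_j(\mu_i-\mu_j)^2 \ge \tfrac{r}{2}(\mu_1-\mu_m)^2$ (the paper cites this as \cite[Lemma~1.3]{langer}, via the intermediate step $\tfrac{r_1 r_m}{r_1+r_m}r(\mu_1-\mu_m)^2$), and pass to Frobenius pullbacks to replace $\mu_{\max}(E|_D),\mu_{\min}(E|_D)$ by $L_{\max}(E|_D),L_{\min}(E|_D)$. Your limit argument in $k$ is a perfectly good substitute for (and in fact makes more explicit than the paper does) the implicit use of Theorem~\ref{fdHN} to pick a single large enough $k$.

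There is, however, a genuine gap in your handling of the case $L_{\max}(E|_D)=L_{\min}(E|_D)$. You need the right-hand side $d\,\Delta(E)D_2\cdots D_{n-1} + 2r^2(L_{\max}-\mu)(\mu-L_{\min})$ to be nonnegative, and you propose to get this from Theorem~\nameref{BI}(iv) applied to $E|_D$ together with $(L_{\max}-\mu)(\mu-L_{\min})\ge 0$. But BI(iv) on the $(n-1)$-fold $D$ with polarization $(D_2|_D,\ldots,D_{n-1}|_D)$ yields $\Delta(E|_D)\,D_3|_D\cdots D_{n-1}|_D \ge 0$, i.e.\ $\Delta(E)D_1D_3\cdots D_{n-1}\ge 0$, which is \emph{not} the quantity $d\,\Delta(E)D_2\cdots D_{n-1}=(D_1^2D_2\cdots D_{n-1})\cdot(\Delta(E)D_2\cdots D_{n-1})$ appearing on the right-hand side. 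More to the point, $E$ itself is not assumed semistable, so there is no reason for $d\,\Delta(E)D_2\cdots D_{n-1}$ alone to be nonnegative; the second term is genuinely needed to compensate. The paper closes this case by invoking Remark~\ref{T5(r) to T3(r) rmk}, an inequality on $X$ (not on $D$) valid for \emph{any} torsion-free $E$ with ample polarization, which gives directly $d\,\Delta(E)D_2\cdots D_{n-1} + r^2(L_{\max}-\mu)(\mu-L_{\min})\ge 0$ and hence a fortiori the nonnegativity of the right-hand side. That remark is itself a consequence of BI(i) applied to the strongly semistable HN factors of a high Frobenius pullback of $E$, combined with Hodge index --- so BI is the right ingredient, but it must be fed through Lemma~\ref{T5(r) to T3(r) lem} on $X$, not applied to $E|_D$.
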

\begin{proof}
First consider the case when $E|_{D}$ is not strongly semistable. Then, the inequality follows from Theorem \nameref{Res} as well as an elementary inequality \cite[Lemma 1.3]{langer} as below:
\[
\sum_{i<j} r_ir_j (\mu_i-\mu_j)^2 \geq \frac{r_1r_m}{r_1+r_m} r(\mu_1-\mu_m)^2 \geq \frac{r}{2}(\mu_1-\mu_m)^2
\]
where $r_1,\dots,r_m$ are positive real numbers and $\mu_1,\dots,\mu_m$ are real numbers and $r = r_1+ \dots + r_m$. 

Now consider the case when $E|_D$ is strongly semistable. Then, the left-hand side of the inequality is just zero and the result follows from the inequality in Remark \ref{T5(r) to T3(r) rmk} (where the latter is proved using Theorem \nameref{BI}, (i)).
\end{proof}

The boundedness is deduced from the following result. A more precise version is given in \cite{langer}.
\begin{theorem}
\label{RestrictionSlopeEstimate}
Let $H_1,\dots,H_{n-1}$ be very ample divisors on $X$ and let $X_l = |H_1| \cap \dots \cap |H_l|$, $1 \leq l \leq n-1$ be very general complete intersections.  Pick a nef divisor $A$ such that $T_{X_l}(A)$ is globally generated for all $0 \leq l \leq n-1$. Set $\beta_r = \beta(r;A,H_1,\dots,H_{n-1})$. Let $\mu_{\max,l}, \mu_{\min,l}$ denote the maximal and minimal slopes of the Harder-Narasimhan filtration of $E|_{X_l}$.
Then we have the following inequality,
\begin{multline*}
  \mu_{\max,l} - \mu_{\min,l} \leq \\ C(r,n) \left( \sqrt{\max\{d\Delta(E)H_2 \dots H_{n-1},0 \}} + \sqrt{\beta_r} + (\mu_{\max} - \mu_{\min})\right),  
\end{multline*}
where $C(r,n)$ is a constant depending only on $r$ and $n$.
\end{theorem}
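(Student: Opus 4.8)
The plan is to prove the inequality by induction on $l$, with the base case $l=0$ being the tautology $\mu_{\max,0}-\mu_{\min,0}=\mu_{\max}-\mu_{\min}$, valid as soon as we insist $C(r,n)\ge 1$. For brevity write $a_l:=\mu_{\max,l}-\mu_{\min,l}$ and
\[
Q:=\sqrt{\max\{d\Delta(E)H_2\cdots H_{n-1},0\}}+\sqrt{\beta_r}+(\mu_{\max}-\mu_{\min}),
\]
so the goal is $a_l\le C(r,n)\,Q$ for all $l$. The first move is to pass from $\mu$ to $L$: since $L_{\max}\ge\mu_{\max}$ and $L_{\min}\le\mu_{\min}$ for any torsion-free sheaf, it suffices to bound $L_{\max}(E|_{X_l})-L_{\min}(E|_{X_l})$ from above.

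For the inductive step I would regard $X_l$ as a general member of the very ample linear system $|H_l|$ on the smooth variety $X_{l-1}$ (smoothness and sufficient generality coming from Bertini and the hypothesis that $X_l$ is a very general complete intersection), with the remaining divisors $H_{l+1}|_{X_{l-1}},\dots,H_{n-1}|_{X_{l-1}}$ still ample and $E|_{X_{l-1}}$ still torsion-free of rank $r$. Applying Corollary \ref{RestrictionLEstimate} on $X_{l-1}$ then gives
\[
\tfrac r2\bigl(L_{\max}(E|_{X_l})-L_{\min}(E|_{X_l})\bigr)^2\le d_l\,\Delta_l+2r^2\bigl(L_{\max}(E|_{X_{l-1}})-\mu\bigr)\bigl(\mu-L_{\min}(E|_{X_{l-1}})\bigr),
\]
where $\mu=\mu(E|_{X_{l-1}})$, the number $d_l:=(H_l|_{X_{l-1}})^2\cdot H_{l+1}|_{X_{l-1}}\cdots H_{n-1}|_{X_{l-1}}>0$, and $\Delta_l$ is $\Delta(E)$ paired (on $X$, via the projection formula) with a product of $n-2$ of the $H_i$. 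Taking square roots and using $\sqrt{x+y}\le\sqrt x+\sqrt y$ together with $\sqrt{uv}\le\tfrac12(u+v)$ for $u,v\ge 0$, and noting that the left-hand side is non-negative, this becomes
\[
L_{\max}(E|_{X_l})-L_{\min}(E|_{X_l})\le\sqrt{\tfrac2r}\,\sqrt{\max\{d_l\Delta_l,0\}}+\sqrt r\,(u+v),
\]
with $u:=L_{\max}(E|_{X_{l-1}})-\mu$ and $v:=\mu-L_{\min}(E|_{X_{l-1}})$.

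To close the recursion I would bound $u$ and $v$ in terms of $a_{l-1}$ and $\beta_r$. Since the slope $\mu$ is a rank-weighted average of the slopes of the Harder--Narasimhan factors of $E|_{X_{l-1}}$, it lies in $[\mu_{\min,l-1},\mu_{\max,l-1}]$; hence $u\le(L_{\max}(E|_{X_{l-1}})-\mu_{\max,l-1})+a_{l-1}\le\alpha(E|_{X_{l-1}})+a_{l-1}$, and symmetrically $v\le\alpha(E|_{X_{l-1}})+a_{l-1}$. By Proposition \ref{bound of alpha E}, applicable because $T_{X_{l-1}}(A)$ is globally generated and $\mathrm{rk}(E|_{X_{l-1}})=r$, we get $\alpha(E|_{X_{l-1}})\le\frac{r-1}{p-1}A\,H_1\cdots H_{n-1}=\frac1r\sqrt{\beta_r}$ (this term being $0$ in characteristic zero). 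Substituting these bounds, the induction hypothesis $a_{l-1}\le C(r,n)Q$, and the estimates $\sqrt{\beta_r}\le Q$ and $\sqrt{\max\{d_l\Delta_l,0\}}\le C'\,Q$ for a constant $C'=C'(r,n)$ (see the next paragraph) into the last display yields a recursion of the form $a_l\le\bigl(c_1(r)+c_2(r)\,C(r,n)\bigr)Q$ with explicit $c_1,c_2$. Iterating from $l=0$ up to $l=n-1$ produces a finite sequence of constants $C_0=1,C_1,\dots,C_{n-1}$ depending only on $r$ and $n$, and I would set $C(r,n):=\max_{0\le l\le n-1}C_l$.

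The step I expect to be the real obstacle is the bookkeeping of intersection numbers: one must keep track, after each hyperplane cut, of precisely which product of the $H_i$ the discriminant is paired against, and verify that the quantity $d_l\Delta_l$ arising at stage $l$ is governed by the single invariant $d\,\Delta(E)H_2\cdots H_{n-1}$ appearing in the statement, so that the final constant really depends only on $r$ and $n$ and not on $E$ or on the individual $H_i$. When the $H_i$ are taken to be a common very ample divisor this identification is immediate; in general it is the point one has to be careful with, and it is where the more precise statement of \cite{langer} is more delicate. Beyond that, one only needs to re-check the hypotheses of Corollary \ref{RestrictionLEstimate} and Proposition \ref{bound of alpha E} at each stage — torsion-freeness of $E|_{X_{l-1}}$, enough generality of the hyperplane sections, and that $A$ restricts to a nef divisor with $T(A)$ globally generated on each $X_{l-1}$ — all of which are guaranteed by the hypotheses of the theorem.
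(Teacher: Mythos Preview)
Your approach is essentially the same as the paper's: induct on $l$, apply Corollary~\ref{RestrictionLEstimate} on $X_{l-1}$ to control $L_{\max}-L_{\min}$ of the restriction, and use Proposition~\ref{bound of alpha E} to pass between $L$ and $\mu$; the paper in fact gives less detail than you do, writing only ``For higher $l$, we use induction.'' The bookkeeping issue you flag for the discriminant terms $d_l\Delta_l$ is genuine and is exactly why the paper remarks that ``a more precise version is given in \cite{langer}''---as you observe, it disappears when the $H_i$ are all equal, which is the only case needed for the application in Theorem~\ref{BoundednessTheorem}.
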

\begin{remark}
Before we move on to the proof, it is worth mentioning that except for $\mu_{\max}-\mu_{\min}$, the rest of terms in the right hand side together with $\mu_{\min,l}$ can all be bounded using constants depending only on $X$ and the Hilbert polynomial of $E$.
\end{remark}
\begin{proof}
For $l = 1$, we use Corollary \ref{RestrictionLEstimate}, and get 
\begin{align*}
  (L_{\max,1} - L_{\min,1})^2 &\leq \frac{2}{r} d \Delta(E)H_2 \dots H_{n-1} + 4r(L_{\max} - \mu)(\mu - L_{\min})  \\
  &\leq \frac{2}{r} \max\{d\Delta(E)H_2 \dots H_{n-1},0\} + 4r(L_{\max} - L_{\min})^2 \\
  &\leq 4r \left( \max\{d\Delta(E)H_2 \dots H_{n-1},0\} + (L_{\max} - L_{\min})^2 \right)
\end{align*}

Now using the definition and Proposition \ref{bound of alpha E} that $\mu_{\max} - \mu_{\min} \leq L_{\max} - L_{\min} \leq \mu_{\max} - \mu_{\min} + \frac{2\sqrt{\beta_r}}{r}$, we get 
\begin{multline*}
  (\mu_{\max,1} - \mu_{\min,1})^2 \\
 \leq 4r \left( \max\{d\Delta(E)H_2 \dots H_{n-1},0\} + (\mu_{\max} - \mu_{\min} + 2\sqrt{\beta_r})^2 \right)
\end{multline*}
Using $\sqrt{a+b} \leq \sqrt{a} + \sqrt{b}$ for $a,b \geq 0$, we get 
\begin{multline*}
  \mu_{\max,1} - \mu_{\min,1} \\ \leq 4\sqrt{r} \left( \sqrt{\max\{d\Delta(E)H_2 \dots H_{n-1},0\}} + (\mu_{\max} -\mu_{\min}) + \sqrt{\beta_r} \right),
\end{multline*}
which gives the required inequality for $l = 1$. For higher $l$, we use induction. 
\end{proof}

Now we are ready to prove the main theorem of the article, which states that the moduli space, parametrizing torsion-free sheaves with a fixed Hilbert polynomial, and with a upper-bound for the slopes of their Harder-Narasimhan filtrations, is bounded.
\begin{theorem}
\label{BoundednessTheorem}
Let $X$ be a projective variety over an algebraically closed field with an ample line bundle $\mathcal{O}_X(1)$. Let $P$ be a polynomial of degree $d$ and let $\mu_0 \in \mathbb{R}$. Then, the family of torsion-free sheaves whose Hilbert polynomial are $P$ and whose $\mu_{\max}$ are at most $\mu_0$ is bounded. 
\end{theorem}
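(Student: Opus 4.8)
The plan is to verify the hypotheses of Kleiman's criterion (Theorem~\ref{Kleiman}): for each torsion-free sheaf $E$ in the family I will exhibit an $E$-regular sequence of hyperplanes along which all of the numbers $h^0(E|_{H_1\cap\dots\cap H_i})$ are bounded by constants depending only on $P$, $X$ and $\mu_0$. By Lemma~\ref{H0BoundedByMuMax} these are all controlled once one bounds $\mu_{\max}$ of the restriction of $E$ to a general complete-intersection \emph{curve}, so the entire problem reduces to producing such a bound uniformly in $E$.

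After replacing $\mathcal{O}_X(1)$ by a sufficiently high power (which changes neither boundedness nor the hypothesis, the slope merely rescaling and the Hilbert polynomial becoming $m\mapsto P(m_0 m)$), I may assume $\mathcal{O}_X(1)$ is very ample. Recall that $r=\mathrm{rk}(E)$ and $\deg(E)$, hence $\mu(E)$, are determined by $P$ and $X$. Choose general members $H_1,\dots,H_n\in|\mathcal{O}_X(1)|$: since $E$ is torsion-free this sequence is $E$-regular, the complete intersections $X_i:=H_1\cap\dots\cap H_{n-i}$ are smooth of dimension $i$, and I set $C:=X_1$, a general complete-intersection curve. Lemma~\ref{H0BoundedByMuMax} then gives, for $i=1,\dots,n$,
\[
h^0\big(X_i, E|_{X_i}\big)\ \le\ r\deg(X)\cdot\frac{1}{i!}\Big[\tfrac{\mu_{\max}(E|_C)}{\deg(X)}+i\Big]_+^{\,i},
\]
while $h^0$ of $E$ restricted to the length-$\deg(X)$ scheme $H_1\cap\dots\cap H_n$ is $r\deg(X)$; so it remains only to bound $\mu_{\max}(E|_C)$ from above.

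For this, write $\mu_{\max}(E|_C)=\mu_{\min}(E|_C)+\big(\mu_{\max}(E|_C)-\mu_{\min}(E|_C)\big)$ and note $\mu_{\min}(E|_C)\le\mu(E|_C)=\mu(E)$, a constant. For the difference I apply Theorem~\ref{RestrictionSlopeEstimate} with $l=n-1$, for the fixed polarization $H_1=\dots=H_{n-1}$ and a fixed nef $A$ with $T_{X_l}(A)$ globally generated (both depending only on $X$): it bounds $\mu_{\max}(E|_C)-\mu_{\min}(E|_C)$ above by
\[
C(r,n)\Big(\sqrt{\max\{d\,\Delta(E)H_2\cdots H_{n-1},\,0\}}+\sqrt{\beta_r}+\big(\mu_{\max}(E)-\mu_{\min}(E)\big)\Big).
\]
Here $\beta_r$ is a fixed constant; $d\,\Delta(E)H_2\cdots H_{n-1}$ is bounded above by a constant depending only on $P$ and $X$ (see the remark following Theorem~\ref{RestrictionSlopeEstimate}); and $\mu_{\max}(E)\le\mu_0$ by hypothesis, while an elementary manipulation of the Harder--Narasimhan filtration — using $r\mu(E)=\sum_i r_i\mu_i$ with every $\mu_i\le\mu_0$ — gives $\mu_{\min}(E)\ge r\mu(E)-(r-1)\mu_0$, whence $\mu_{\max}(E)-\mu_{\min}(E)\le r\big(\mu_0-\mu(E)\big)$. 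Collecting these, $\mu_{\max}(E|_C)$ is bounded above by a constant depending only on $P$, $X$ and $\mu_0$; plugging this into the displayed estimate yields the constants $C_i$ of Kleiman's criterion, and Theorem~\ref{Kleiman} then gives the boundedness.

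The main obstacle, beyond bookkeeping, is the claim that $d\,\Delta(E)H_2\cdots H_{n-1}$ is bounded above purely in terms of $P$ and $X$. This is not formal because for $n\ge 2$ the Hilbert polynomial determines only the combination $\mathrm{ch}_2(E)H^{n-2}-\tfrac12 c_1(E)K_XH^{n-2}$, not $\mathrm{ch}_2(E)H^{n-2}$ itself; one regains control of $\Delta(E)H^{n-2}=c_1(E)^2H^{n-2}-2r\,\mathrm{ch}_2(E)H^{n-2}$ by applying the Hodge index theorem to $c_1(E)$ — whose pairing with $H^{n-1}$ is fixed — to bound $c_1(E)^2H^{n-2}$ and $c_1(E)K_XH^{n-2}$ from above after completing the square. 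The remaining care is that a single general complete-intersection curve can be chosen both $E$-regular and general enough for Theorem~\ref{RestrictionSlopeEstimate}, and that the lower bound used for $\mu_{\min}(E)$ (and the automatic one for $\mu_{\min}(E|_C)$) is uniform in $E$.
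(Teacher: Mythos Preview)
Your proof is correct and follows essentially the same route as the paper's: reduce to Kleiman's criterion via Lemma~\ref{H0BoundedByMuMax}, then bound $\mu_{\max}$ of the restriction to a general complete-intersection curve by writing it as $\mu_{\min,l}+(\mu_{\max,l}-\mu_{\min,l})$, invoking Theorem~\ref{RestrictionSlopeEstimate}, and using the elementary estimate $\mu_{\max}(E)-\mu_{\min}(E)\le r(\mu_0-\mu)$. Your added details---passing to a power of $\mathcal{O}_X(1)$ to ensure very ampleness, and the Hodge-index/completing-the-square argument bounding $\Delta(E)H^{n-2}$ from above in terms of $P$ and $X$---fill in points the paper leaves to its remark after Theorem~\ref{RestrictionSlopeEstimate}, and are handled correctly.
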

\begin{proof} 
We use the same notation as in Theorem \ref{RestrictionSlopeEstimate}.
According to  Kleiman's criteria (Theorem \ref{Kleiman}) and Lemma \ref{H0BoundedByMuMax}, to prove boundedness, it is enough to give an upper bound of $\mu_{\max,l}$ (or more specifically for $l=1$). Note that $\mu_l = \mu(E|_{X_{l}})$ is independent of $E$ and depends only on $P$ for all $0 \leq l \leq n$. To get such an upper bound, we use Theorem \ref{RestrictionSlopeEstimate} to get 
\begin{align*}
    \mu_{\max,l}(E) &\leq
\mu_l + (\mu_{\max,l}(E) - \mu_{\min,l}(E)) \\ &\leq C_1 + C_2(\mu_{\max} (E) - \mu_{\min}(E)) \\ &\leq C_1 + C_2 r (\mu_0 - \mu),
\end{align*}
where $C_1, C_2$ are constants independent of $E$.
Here the last inequality follows from $\mu_{\max}\leq \mu_0$ and the observation that $r\mu \leq \mu_{\min}(E) + (r-1) \mu_{\max}(E)$.
\end{proof}

A more general version of the above theorem for pure-dimensional sheaves can be proved by imitating the proof in \cite[Theorem 1.1]{simpson}.

\bibliographystyle{alpha} 
\bibliography{references}

\end{document}